\title{Eberlein oligomorphic groups}
\author{Ita{\"\i} Ben Yaacov}
\address{Univ Lyon \\Universit\'e Claude Bernard Lyon 1 \\Institut Camille Jordan, CNRS UMR 5208 \\43 boulevard du 11 novembre 1918\\69622 Villeurbanne \textsc{Cedex} \\France}
\author{Tom\'as Ibarluc{\'\i}a}
\address{Univ Lyon \\Universit\'e Claude Bernard Lyon 1 \\Institut Camille Jordan, CNRS UMR 5208 \\43 boulevard du 11 novembre 1918\\69622 Villeurbanne \textsc{Cedex} \\France}
\curraddr{
  Institut de Math{\'e}matiques de Jussieu--PRG \\
  Universit\'e Paris 7, case 7012 \\
  75205 Paris \textsc{cedex} 13 \\
  France
}
\email{ibarlucia@math.univ-paris-diderot.fr}
\author{Todor Tsankov}
\address{
  Institut de Math{\'e}matiques de Jussieu--PRG \\
  Universit\'e Paris 7, case 7012 \\
  75205 Paris \textsc{cedex} 13 \\
  France}
\email{todor@math.univ-paris-diderot.fr}
\subjclass[2010]{Primary: 22F50. Secondary: 03C45, 22A25, 03C98, 22A15.}
\keywords{Hilbert compactification, oligomorphic, $\aleph_0$-categorical, Fourier--Stieltjes algebra, semitopological semigroup compactification, inverse semigroup, Hilbert-representable}
\g@addto@macro\bfseries{\boldmath}
\def\mbZ{\mathbb{Z}}
\def\mbQ{\mathbb{Q}}
\def\mcA{\mathcal{A}}
\def\mbC{\mathbb{C}}
\def\mcH{\mathcal{H}}
\def\UH{U(\mathcal{H})}
\def\ThH{\Theta(\mathcal{H})}
\def\CG{C(G)}
\DeclareMathOperator{\Aut}{Aut}
\DeclareMathOperator{\Iso}{Iso}
\DeclareMathOperator{\Homeo}{Homeo}
\DeclareMathOperator{\Hilb}{Hilb}
\DeclareMathOperator{\WAP}{WAP}
\DeclareMathOperator{\RUC}{RUC}
\DeclareMathOperator{\UC}{UC}
\DeclareMathOperator{\acl}{acl}
\DeclareMathOperator{\Cb}{Cb}
\DeclareMathOperator{\tp}{tp}
\DeclareMathOperator{\rank}{rank}
\newcommand{\eq}{\mathrm{eq}}
\theoremstyle{plain}        \newtheorem{fact}{Fact}[section]
\theoremstyle{plain}        \newtheorem{theorem}[fact]{Theorem}
\theoremstyle{plain}        \newtheorem{lem}[fact]{Lemma}
\theoremstyle{plain}        \newtheorem{prop}[fact]{Proposition}
\theoremstyle{plain}        \newtheorem{cor}[fact]{Corollary}
\theoremstyle{definition}   \newtheorem{rem}[fact]{Remark} 
\theoremstyle{definition}   \newtheorem{defin}[fact]{Definition}
\theoremstyle{definition}   \newtheorem{convention}[fact]{Convention}
\theoremstyle{definition}   
\theoremstyle{definition}   \newtheorem{question}[fact]{Question}
\theoremstyle{definition}   \newtheorem{example}[fact]{Example}
\numberwithin{equation}{section}
\def\Ind#1#2{#1\setbox0=\hbox{$#1x$}\kern\wd0\hbox to 0pt{\hss$#1\mid$\hss}
\lower.9\ht0\hbox to 0pt{\hss$#1\smile$\hss}\kern\wd0}
\def\ind{\mathop{\mathpalette\Ind{}}}
\def\Notind#1#2{#1\setbox0=\hbox{$#1x$}\kern\wd0\hbox to 0pt{\mathchardef
\nn="3236\hss$#1\nn$\kern1.4\wd0\hss}\hbox to 0pt{\hss$#1\mid$\hss}\lower.9\ht0
\hbox to 0pt{\hss$#1\smile$\hss}\kern\wd0}
\newcommand{\cl}[2][]{\overline{#2}^{#1}}
\newcommand{\actson}{\curvearrowright}
\begin{document}

\begin{abstract}
We study the Fourier--Stieltjes algebra of Roelcke precompact, non-archimedean, Polish groups and give a model-theoretic description of the Hilbert compactification of these groups. We characterize the family of such groups whose Fourier--Stieltjes algebra is dense in the algebra of weakly almost periodic functions: those are exactly the automorphism groups of $\aleph_0$-stable, $\aleph_0$-categorical structures. This analysis is then extended to all semitopological semigroup compactifications $S$ of such a group: $S$ is Hilbert-representable if and only if it is an inverse semigroup. We also show that every factor of the Hilbert compactification is Hilbert-representable.
\end{abstract}

\maketitle
\tableofcontents

\section*{Introduction}
It has long been recognized in model theory that the action of the automorphism group of an $\aleph_0$-categorical structure on the structure (and its powers) captures all model-theoretic information about it. Moreover, by a classical result of Ahlbrandt and Ziegler~\cite{ahlzie84}, the automorphism group remembers the structure up to bi-interpretability. As most interesting model-theoretic properties are preserved by interpretations, it is reasonable to expect that those would correspond to natural properties of the automorphism group.

It turns out that many model-theoretic properties of the structure are reflected in the behaviour of a certain universal dynamical system associated to the group that we proceed to describe. First, recall that automorphism groups of $\aleph_0$-categorical structures are Roelcke precompact in the following sense.

\begin{defin}
A topological group $G$ is called \emph{Roelcke precompact} if for every neighbourhood $U$ of the identity, there exists a finite set $F$ such that $UFU = G$.
\end{defin}

To each Roelcke precompact Polish group $G$ one can naturally associate its \emph{Roelcke compactification} $R(G)$, the completion of $G$ with respect to its Roelcke (or lower) uniformity; see Subsection~\ref{The WAP compactification...} for more details. The natural action $G \actson R(G)$ renders it a topological dynamical system. From the model-theoretic point of view, if we represent $G$ as the automorphism group of an $\aleph_0$-categorical structure $M$, $R(G)$ can be considered as a suitable closed subspace of the type space $S_\omega(M)$ in infinitely many variables over the model. Thus, there is a natural correspondence between formulas with parameters from the model, on the one hand, and continuous functions on $R(G)$, on the other. This allows building a dictionary between the model-theoretic and the dynamical setting. For example, \emph{stable} formulas correspond to \emph{weakly almost periodic} (WAP) functions and \emph{NIP} formulas correspond to \emph{tame} functions.

Particularly relevant to us is the theory of Banach representations of dynamical systems as developed by Glasner and Megrelishvili in a series of papers (see \cite{glamegSurvey} and the references therein). If $G \actson X$ is a topological dynamical system and $V$ is a Banach space, a \emph{representation of $X$ on $V$} is a pair of continuous maps $\iota \colon X \to B$, $\pi \colon G \to \Iso(V)$, where $B$ is the unit ball of $V^*$ equipped with the weak$^*$ topology, $\Iso(V)$ is the group of linear isometries of $V$, equipped with the strong operator topology, $\pi$ is a homomorphism, and
\begin{equation*}
\langle v, \iota(gx) \rangle = \langle \pi(g)^{-1}v, \iota(x) \rangle,
\end{equation*}
for all $x \in X$, $v \in V$, $g \in G$ (here, $\langle v,\varphi\rangle=\varphi(v)$ is the usual pairing of $V$ and $V^*$). A representation is \emph{faithful} if $\iota$ is an embedding. If $\mathcal{K}$ is a class of Banach spaces, we say that $G \actson X$ is \emph{$\mathcal{K}$-representable} if it admits a faithful representation on a Banach space in the class $\mathcal{K}$.

All dynamical systems are representable on some Banach space; however, if one restricts to some (well-chosen) class of Banach spaces $\mathcal{K}$, the $\mathcal{K}$-representable systems usually form an interesting family. Somewhat unexpectedly, in the $\aleph_0$-categorical setting, there are some precise connections between model-theoretic properties of the structure and the classes of Banach spaces $R(G)$ can be represented on: for example, $M$ is stable iff $R(G)$ can be represented on a reflexive Banach space \cite[\textsection 5]{bentsa}\cite[\textsection 5.1]{glamegSurvey} and $M$ is NIP iff $R(G)$ can be represented on a Banach space that does not contain a copy of $\ell^1$ \cite[\textsection 4]{iba14}\cite[\textsection 8.1]{glamegSurvey}. One of the main motivating questions for this paper was what the appropriate model-theoretic condition is for $R(G)$ to be representable on a Hilbert space.

For some classes $\mathcal{K}$ of Banach spaces, there are dynamical systems that are universal for the $\mathcal{K}$-representable ones. For example, $W(G)$, the \emph{WAP compactification} of $G$, is universal for reflexively representable systems, and $H(G)$, the \emph{Hilbert compactification}, is universal for Hilbert-representable systems. Both $W(G)$ and $H(G)$ carry the structure of a compact \emph{semitopological semigroup} and $H(G)$ is a factor of $W(G)$.

The main focus of this paper are the automorphism groups of $\aleph_0$-categorical \emph{classical, discrete} (multi-sorted) structures or, equivalently, Roelcke precompact, Polish, \emph{non-archimedean} groups. (A~group is \emph{non-archimedean} if it admits an open basis at the identity consisting of open subgroups.) We make this assumption tacitly throughout the paper: when we say ``$\aleph_0$-categorical structure'', we will always mean a classical one, as opposed to metric. A non-archimedean, Polish, Roelcke precompact group will be called \emph{pro-oligomorphic}; it is \emph{oligomorphic} if the structure can be chosen one-sorted.

For every non-archimedean group $G$, the compactification $G\to H(G)$ is a topological embedding. Our first result is a concrete description of $H(G)$ for pro-oligomorphic groups, in model-theoretic terms. More precisely, we have the following.
\begin{theorem}
Let $M$ be an $\aleph_0$-categorical structure and let $G = \Aut(M)$. Then $H(G)$ is isomorphic to the semigroup of partial elementary embeddings $M^\eq \to M^\eq$ with algebraically closed domains.
\end{theorem}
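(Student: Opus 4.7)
The plan is to exploit the realization of $H(G)$ as the closure of the image of $G$ inside the product $\prod_\pi B(\mathcal{H}_\pi)$ over continuous unitary representations $\pi$, each factor carrying the weak-operator topology. For pro-oligomorphic $G = \Aut(M)$, Tsankov's classification asserts that every continuous unitary representation of $G$ embeds into a direct sum of quasi-regular representations $\ell^2(G/V)$ with $V$ an open subgroup. Since every open subgroup is the pointwise stabilizer $G_{\bar a}$ of some finite tuple $\bar a \in M^\eq$, this reduces the description of $H(G)$ to the analysis of weak-operator limits of permutation operators on the orbits $G \cdot \bar a$.

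I would first construct a map $\Phi\colon H(G) \to S$, where $S$ denotes the semigroup of partial elementary embeddings $M^\eq \to M^\eq$ with algebraically closed domain. Given $x \in H(G)$, realized as a net limit $x = \lim g_i$, and given $\bar a \in M^\eq$, consider the limit $T_{\bar a} := \lim \pi(g_i)$ on $\ell^2(G \cdot \bar a)$. The central structural claim is that, for each $\bar b$ in the orbit, $T_{\bar a} e_{\bar b}$ is either a basis vector $e_{\bar c}$ or the zero vector. Granting this, define $f_x \colon M^\eq \to M^\eq$ by $f_x(\bar b) = \bar c$ in the first case. Elementarity of $f_x$ follows from type-preservation of permutation operators, and the $\acl$-closedness of $\dom f_x$ is a consequence of the standard characterization $\bar a \in \acl(A)$ iff the $G_A$-orbit of $\bar a$ is finite, which is exactly the condition under which the corresponding matrix coefficient remains discrete in the limit.

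For the inverse map, given $f \colon A \to B$ in $S$, an $\aleph_0$-categorical back-and-forth produces a net of automorphisms $g_i \in G$ that agree with $f$ on an exhaustion of $A$ by finite subsets and are sufficiently generic elsewhere. The matrix coefficients $\langle \pi(g_i) e_{\bar c}, e_{\bar d} \rangle$ then stabilize to $1$ if $f$ eventually sends $\bar c$ to $\bar d$, and to $0$ otherwise, so the net converges in $H(G)$ to an element $\Psi(f)$. One checks $\Phi \circ \Psi = \mathrm{id}_S$ and $\Psi \circ \Phi = \mathrm{id}_{H(G)}$ by construction. That $\Phi$ is a semigroup homomorphism follows from separate continuity of multiplication in $H(G)$ together with the compatibility between composition of partial maps on $\acl$-closed sets and composition of the associated operator limits.

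The main obstacle is the $\{0,1\}$-dichotomy for the matrix coefficients of $T_{\bar a}$: this is what turns the limiting contraction into an honest partial map, and it is where $\aleph_0$-categoricity and the algebraic closure hypothesis enter decisively. I expect to obtain it by decomposing $\ell^2(G \cdot \bar a)$ under $G_{\bar a}$ into finite-dimensional summands (indexed by the finite $G_{\bar a}$-orbits, i.e.\ elements of $\acl(\bar a)$) plus an infinite-dimensional remainder; the former give the $e_{\bar c}$ limits, while the latter contributes weak-null limits whenever the net escapes to infinity along the corresponding orbits. Once the dichotomy is established, the remaining verifications---bijectivity, semigroup structure, and homeomorphism---should reduce to routine bookkeeping together with the universality of $H(G)$ among Hilbert-representable $G$-compactifications.
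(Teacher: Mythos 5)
Your proposal is correct and follows essentially the same route as the paper: reduce via Tsankov's classification theorem to the closure of $G$ in the contractions of $\ell^2(M^\eq)$, observe that weak-operator limits of the permutation operators are partial injections, and identify those limits with the partial elementary maps having algebraically closed domain by a back-and-forth/independence argument (the genericity needed to push tuples outside $\acl$ of the finite base to infinity is exactly where the paper invokes existence and non-triviality of stable independence). The only remark worth making is that your ``main obstacle'', the $\{0,1\}$-dichotomy, is immediate --- a weak limit $v$ of orthonormal basis vectors satisfies $\langle v, e_{\bar d}\rangle\in\{0,1\}$ for all $\bar d$ and $\|v\|\le 1$, forcing $v$ to be a basis vector or $0$; the paper encodes this by taking the closure of $G$ in $K^K$ with $K$ the one-point compactification of $M^\eq$ --- so no decomposition of $\ell^2(G\cdot\bar a)$ under $G_{\bar a}$ is needed.
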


Using this description, we give two characterizations of pro-oligomorphic groups for which $W(G) = H(G)$: one model-theoretic, and one in terms of the semigroup $W(G)$. This is the main result of the paper.
\begin{theorem}
  \label{th:i:main}
Let $M$ be an $\aleph_0$-categorical structure and let $G = \Aut(M)$. The following are equivalent:
\begin{enumerate}
\item The idempotents of $W(G)$ commute.
\item $M$ is one-based for stable independence.
\item $W(G) = H(G)$.
\end{enumerate}
\end{theorem}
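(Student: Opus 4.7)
The plan is to reduce both equivalences to an explicit model-theoretic description of $W(G)$, analogous to the description of $H(G)$ given in the previous theorem, and then to apply the general Hilbert/inverse-semigroup dichotomy alluded to in the abstract.

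The first step, and in my view the main obstacle, is to describe $W(G)$ concretely in the pro-oligomorphic setting. By analogy with Theorem~1 for $H(G)$, I expect $W(G)$ to be identified with a semigroup of partial elementary maps $M^\eq\to M^\eq$, but whose domains are only required to be closed under canonical bases of stable types, with composition governed by free amalgamation over stable independence rather than by mere restriction. Under such a description, the canonical factor map $W(G)\twoheadrightarrow H(G)$ should act by restricting a partial elementary map to the algebraic closure of its domain. Setting this up rigorously requires identifying the WAP functions on $G$ with the stable formulas of $M^\eq$ (a known correspondence in this setting) and verifying that the Gelfand spectrum inherits exactly this semigroup structure.

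For (1)$\Leftrightarrow$(3), I would invoke the general result stated in the abstract: a semitopological semigroup compactification of $G$ is Hilbert-representable if and only if it is an inverse semigroup. Since $H(G)$ is universal among Hilbert-representable compactifications, $W(G) = H(G)$ is equivalent to $W(G)$ being itself Hilbert-representable, hence to $W(G)$ being an inverse semigroup. Now an inverse semigroup is precisely a regular semigroup with commuting idempotents; in our setting $W(G)$ is automatically regular, because every partial elementary map has a partial inverse given by the converse relation, so (1) and (3) are equivalent.

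For (2)$\Leftrightarrow$(3), under the description of $W(G)$ above, the idempotents of $W(G)$ correspond to the identity maps on possible domains, i.e.\ to subsets of $M^\eq$ closed under canonical bases of stable types. The factor map $W(G)\to H(G)$ collapses two idempotents to the same element of $H(G)$ exactly when they have the same algebraic closure. Hence $W(G)=H(G)$ holds iff every canonical-base-closed domain is already algebraically closed, which is precisely the condition that $\Cb(a/B)\subseteq\acl(a)$ for every tuple $a$ and parameter set $B$; this is one of the standard formulations of one-basedness for stable independence. For the nontrivial direction (2)$\Rightarrow$(3), one-basedness directly guarantees that the domains in the semigroup description of $W(G)$ coincide with algebraically closed sets, forcing the factor map to be injective. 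For (3)$\Rightarrow$(2), a hypothetical canonical base not contained in an algebraic closure would yield two distinct idempotents of $W(G)$ identified in $H(G)$, contradicting $W(G)=H(G)$.
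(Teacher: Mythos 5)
There is a genuine gap here, and one step is actually false. Your argument for $(1)\Leftrightarrow(3)$ hinges on the claim that $W(G)$ is ``automatically regular, because every partial elementary map has a partial inverse given by the converse relation''. This begs the question: the elements of $W(G)$ are not partial elementary maps (only their images in $H(G)=P(M)$ are); they are stable types $[x,y]$ of pairs of elementary self-embeddings, with product $[x,y][y,z]=[x,z]$ when $x\ind_y z$. By Lemma~\ref{idempotents and regular elements}, $[x,y]$ is regular iff $x\ind_{x\cap y}y$, so regularity of $W(G)$ is essentially \emph{equivalent} to one-basedness --- it is the content of the theorem, not an automatic fact. If $W(G)$ were always regular it would always be an inverse semigroup by Fact~\ref{fact:Sp=Sqp etc}, all three conditions would hold for every pro-oligomorphic group, and the Hrushovski pseudoplane of Example~\ref{ex:Hrushovski} contradicts this. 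The correct passage from commuting idempotents to regularity is the computation $pp^*p=x^*yy^*xx^*y=x^*xx^*yy^*y=x^*y=p$, writing $p=[x,y]=x^*y$ and using that $xx^*$ and $yy^*$ are idempotents. Moreover, invoking the general ``Hilbert-representable iff inverse semigroup'' theorem from the abstract is circular: in the paper that result is deduced \emph{from} the present theorem (its proof uses the injectivity of $W(G)\to H(G)$ on regular elements established here).

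Your $(2)\Leftrightarrow(3)$ argument likewise rests on an unproved structural description of $W(G)$ as partial maps with ``canonical-base-closed domains'', with idempotents being identity maps on such domains. Neither claim is established, and the second is not accurate: the idempotents of $W(G)$ are the classes $[x,y]$ with $x\equiv^s_C y$ and $x\ind_C y$ for $C=x\cap y$, which carry more data than a subset of $M^\eq$. The paper's proof needs no such description. For one-basedness implying $W(G)=H(G)$, take $p=[x,y]$ and $q=[x,z]$ with the same image in $H(G)$, so that $x\cap y=x\cap z=C$ and $y\equiv^s_C z$; one-basedness gives $y\ind_C x$ and $z\ind_C x$, and stationarity over the algebraically closed set $C$ yields $y\equiv^s_x z$, i.e.\ $p=q$. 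For the converse, one uses the characterization of regular elements quoted above to pass from regularity of $W(G)$ to $A\ind_{A\cap B}B$ for arbitrary algebraically closed $A,B$ (choosing $x\supseteq A$ and $y\supseteq B$ with $x\ind_A B$ and $x\ind_B y$, so that $x\cap y=A\cap B$). You would need to supply these arguments, or a rigorous proof of your proposed description of $W(G)$, for the proposal to go through.
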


Using Theorem~\ref{th:i:main} and a classical, deep result in model theory, we can now give a satisfactory answer to our initial question.
\begin{cor}
  \label{c:omega-stable}
  Let $M$ be an $\aleph_0$-categorical structure and let $G = \Aut(M)$. Then the following are equivalent:
  \begin{enumerate}
  \item $M$ is $\aleph_0$-stable.
  \item $R(G)$ is Hilbert-representable.
  \end{enumerate}
\end{cor}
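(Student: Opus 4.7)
The plan is to deduce Corollary~\ref{c:omega-stable} from Theorem~\ref{th:i:main} by combining it with two additional inputs: the dictionary between stability and WAP functions recalled in the introduction (which for Roelcke precompact groups translates ``$M$ is stable'' into the equality $R(G)=W(G)$), and the classical, deep theorem identifying, among $\aleph_0$-categorical structures, the $\aleph_0$-stable ones with those that are both stable and one-based. The convenient viewpoint is the chain of factor maps $R(G)\twoheadrightarrow W(G)\twoheadrightarrow H(G)$, so that $R(G)$ is Hilbert-representable precisely when both factors are isomorphisms.

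For (1)$\Rightarrow$(2), I would start from $M$ being $\aleph_0$-stable. Since $\aleph_0$-stability implies stability, the dictionary gives $R(G)=W(G)$. The classical result yields that $M$ is also one-based, so Theorem~\ref{th:i:main} supplies $W(G)=H(G)$. Thus $R(G)=H(G)$, which is Hilbert-representable by construction.

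For (2)$\Rightarrow$(1), assume that $R(G)$ is Hilbert-representable. Since Hilbert spaces are reflexive, $R(G)$ is then reflexively representable, so by the dictionary $R(G)=W(G)$ and $M$ is stable. Moreover, the universal property of $H(G)$ makes $R(G)$ a factor of $H(G)$, which, combined with the canonical factor map $R(G)\to H(G)$, forces $R(G)=H(G)$; in particular $W(G)=H(G)$. Theorem~\ref{th:i:main} then gives that $M$ is one-based, and the classical result concludes that $M$ is $\aleph_0$-stable.

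The only nontrivial step in this programme is the invocation of the classical theorem on $\aleph_0$-categorical $\aleph_0$-stable structures, which is imported as a black box from model theory; everything else is a formal consequence of the universal properties of the various compactifications and of Theorem~\ref{th:i:main}, which already carries the dynamical content.
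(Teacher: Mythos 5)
Your proposal is correct and follows essentially the same route as the paper: the paper's Corollary~\ref{strongly Eberlein} deduces the statement by combining the equivalence ``$M$ stable $\iff$ $R(G)=W(G)$'' with Theorem~\ref{Main} and Zilber's theorem that an $\aleph_0$-categorical stable structure is one-based iff $\aleph_0$-stable, which is exactly your decomposition. The only cosmetic difference is that the paper phrases condition (2) as ``$G$ is strongly Eberlein'' (i.e.\ $\Hilb(G)=\UC(G)$) rather than via the chain of factor maps, but these are identified by Proposition~\ref{Equiv Hilb-rep}.
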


Corollary~\ref{c:omega-stable} and a well-known example of an $\aleph_0$-categorical, stable, non-$\aleph_0$-stable structure, due to Hrushovski, give us the following corollary (cf. Example~\ref{ex:Hrushovski}) which answers a question of Glasner and Megrelishvili \cite[Question~6.10]{glamegSurvey}.
\begin{cor}
  \label{c:Hrushovski}
  There exists an oligomorphic group $G$ that satisfies $R(G) = W(G) \neq H(G)$.
\end{cor}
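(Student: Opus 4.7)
The plan is to combine Corollary~\ref{c:omega-stable} with a classical model-theoretic example. I would take $M$ to be Hrushovski's $\aleph_0$-categorical, one-sorted, stable, non-$\aleph_0$-stable structure (detailed in Example~\ref{ex:Hrushovski}) and set $G = \Aut(M)$. Since the structure is one-sorted, $G$ is oligomorphic by definition.

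To obtain $R(G) = W(G)$, I would invoke the classical correspondence recalled in the introduction between stable formulas on $M$ and weakly almost periodic functions on $G$: stability of $M$ is equivalent to reflexive representability of $R(G)$. Combined with the general facts that $W(G)$ is always a factor of $R(G)$ and that $W(G)$ is universal among reflexively (indeed, WAP-)representable compactifications, reflexive representability of $R(G)$ forces $R(G) = W(G)$. Thus the stable Hrushovski structure yields $R(G) = W(G)$.

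Finally, $W(G) \neq H(G)$ follows by contradiction. If equality held, then combined with the previous step we would get $R(G) = H(G)$, and $H(G)$ is Hilbert-representable by construction, so $R(G)$ would be Hilbert-representable. Corollary~\ref{c:omega-stable} would then force $M$ to be $\aleph_0$-stable, contradicting our choice of $M$. Hence $W(G) \neq H(G)$, as required.

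The only substantive content beyond this routine deduction lies in Example~\ref{ex:Hrushovski}, where Hrushovski's ab initio construction of an $\aleph_0$-categorical, stable, non-$\aleph_0$-stable structure must be exhibited; this is well-known and presents no real obstacle here. I do not foresee further difficulties for the corollary itself once Corollary~\ref{c:omega-stable} and the stability/WAP dictionary are in hand.
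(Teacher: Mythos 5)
Your proposal is correct and follows essentially the same route as the paper: the paper's proof (Example~\ref{ex:Hrushovski}) also takes Hrushovski's $\aleph_0$-categorical stable pseudoplane, obtains $R(G)=W(G)$ from stability (the WAP-group dictionary), and obtains $W(G)\neq H(G)$ from failure of $\aleph_0$-stability, only phrased via one-basedness and Theorem~\ref{Main} rather than via Corollary~\ref{c:omega-stable} directly --- but these are the same fact, since Corollary~\ref{c:omega-stable} is itself deduced from Theorem~\ref{Main} together with Zilber's theorem.
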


While all factors of $W(G)$ are known to be reflexively representable (or \emph{reflexively approximable}, for a general topological group $G$), it is an open question whether all factors of $H(G)$ are Hilbert-representable \cite[Question~5.12.3]{glamegSurvey}. We can give a positive answer to this question in the case of pro-oligomorphic groups (cf. Theorem~\ref{theorem factors of H(G)}).

\begin{theorem}
  Let $G$ be a pro-oligomorphic group. Then all factors of $H(G)$ are Hilbert-representable.
\end{theorem}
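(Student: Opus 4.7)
The plan is to show that every $G$-equivariant continuous surjection $\phi\colon H(G) \to X$ is in fact a semigroup quotient map, so that $X$ inherits from $H(G)$ the structure of a compact semitopological semigroup compactification of $G$. Since $H(G)$ is Hilbert-representable, the characterization of Hilbert-representable compactifications as inverse semigroups (mentioned in the abstract) shows that $H(G)$ is an inverse semigroup, and homomorphic images of inverse semigroups are inverse semigroups, so $X$ will be one as well; applying the same characterization in the other direction to $X$ then yields Hilbert-representability.

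Let $\sim$ denote the closed $G$-invariant equivalence on $H(G)$ defined by $\phi$. The task reduces to showing $\sim$ is a two-sided semigroup congruence. Left compatibility, $s \sim s' \Rightarrow ts \sim ts'$ for $t \in H(G)$, is essentially automatic: approximating $t$ by a net $g_\alpha \in G$, continuity of the right multiplication $R_s$ in the semitopological semigroup gives $g_\alpha s \to ts$ and $g_\alpha s' \to ts'$; $G$-equivariance of $\phi$ yields $\phi(g_\alpha s) = g_\alpha \phi(s) = g_\alpha \phi(s') = \phi(g_\alpha s')$ for each $\alpha$, and continuity of $\phi$ passes this equality to the limit. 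Right compatibility is more delicate, and I would deduce it from left compatibility via the continuous involution $s \mapsto s^*$ on $H(G)$: the key step is to show that $\sim$ is stable under the involution, that is, $s \sim s' \Rightarrow s^* \sim (s')^*$. Granted this, we obtain $t^* s^* \sim t^*(s')^*$ by left compatibility applied to $s^* \sim (s')^*$, and applying the involution once more yields $st \sim s't$.

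The main difficulty is therefore the involution-stability of $\sim$, and this is precisely where the pro-oligomorphic hypothesis must come in --- in the general topological-group setting this is exactly the open question~\cite[Question~5.12.3]{glamegSurvey} of Glasner--Megrelishvili. I would attack it via the explicit description of $H(G)$ given by the preceding theorem in the excerpt, as the semigroup of partial elementary embeddings between algebraically closed sets of $M^{\eq}$. The involution is then the inversion of such an embedding, and the symmetric roles of its source and range (both algebraically closed in $M^{\eq}$, carrying symmetric type data) should preclude any $G$-invariant closed equivalence identifying $s$ with $s'$ while separating $s^*$ from $(s')^*$. An equivalent formulation, on the function-algebra side, asks that every closed $G$-invariant unital $*$-subalgebra of $C(H(G))$ be closed under the Fourier--Stieltjes involution $f \mapsto \bigl(g \mapsto \overline{f(g^{-1})}\bigr)$; the pro-oligomorphic structure of $\Hilb(G)$ should yield this automatically. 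Once involution-stability is obtained the rest is formal, and $X$ is a semitopological inverse semigroup compactification of $G$, hence Hilbert-representable.
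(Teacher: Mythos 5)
Your strategy breaks down at the very first step, before the ``main difficulty'' you identify is even reached: it would prove too much. A factor of $H(G)$ is an arbitrary $G$-ambit quotient, i.e., it corresponds to an arbitrary \emph{left}-invariant closed subalgebra $A\subset\Hilb(G)$, and such an $A$ need not be right-invariant; a non-bi-invariant factor carries no compatible semigroup structure at all (Remark~\ref{rem:semitop semigp}(\ref{rem:semitop semigp.bi-inv}), Fact~\ref{factors of WAP}). If your relation $\sim$ were stable under the involution, then $\mcA(X)$ would be $*$-closed in the sense of Fact~\ref{remark *-closed}, and a left-invariant $*$-closed subalgebra is automatically bi-invariant (since $fg=(g^{-1}f^*)^*$). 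But bi-invariance genuinely fails for some factors: for $G=S_\infty$ the left-invariant closed algebra generated by the single matrix coefficient $g\mapsto\langle e_0,\pi(g)e_0\rangle$ of the natural representation on $\ell^2(\omega)$ consists of functions of $g(0)$ only, and its right translates do not belong to it. So involution-stability of $\sim$ is not a hard lemma awaiting a clever use of $P(M)$ --- it is false precisely for the factors the theorem must cover. A second, independent problem is circularity: the implication ``inverse semitopological $*$-semigroup compactification $\Rightarrow$ Hilbert-representable'' that you invoke at the end is not available in advance; in the paper it is deduced \emph{from} the theorem you are proving (its proof reduces to showing that a certain auxiliary compactification equals $H(G)$ and then cites exactly the statement that factors of $H(G)$ are Hilbert-representable). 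Only the special case $W(G)=H(G)$ (Theorem~\ref{Main}) is proved independently, and that does not apply to an arbitrary quotient.

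The paper's argument is entirely different and stays on the function-algebra side, via Proposition~\ref{Equiv Hilb-rep}: it suffices to show $A=\overline{A\cap B(G)}$ for every left-invariant closed $A\subset\Hilb(G)$. Two ingredients accomplish this. First, any $f\in C(H(G))$ fixed by an open subgroup $V$ has countable range (Roelcke precompactness makes $V\backslash G/V$ finite, hence the set of inner products $\langle\pi(p_1)\eta,\pi(p_2)\eta\rangle$ for a $V$-fixed vector $\eta$ is finite), and such functions are dense in $A$ by Lemma~\ref{dense subalgebras}; consequently every factor of $H(G)$ is zero-dimensional and the finite-range functions are dense in $A$. Second, a finite-range function in $\Hilb(G)$ is an honest matrix coefficient: by the classification of unitary representations, a $\{0,1\}$-valued function in $\Hilb(G)$ is uniformly approximated by, hence equal to, a Boolean combination of $\{0,1\}$-valued matrix coefficients, and Boolean combinations stay in $B(G)$. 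If you wish to pursue a semigroup-theoretic proof, you would first need a reduction of arbitrary factors to bi-invariant ones, and no such reduction is in sight.
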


The correspondence between model-theoretic properties of $\aleph_0$-categorical structures and dynamical properties of their automorphism groups is not restricted to the non-archimedean case. The correct model-theoretic setting for dealing with general Roelcke precompact Polish groups is that of continuous logic and in both \cite{bentsa} and \cite{iba14}, the results are proved in full generality. However, the two most important tools used in this paper are currently only available in the non-archimedean setting: namely, the classification of the unitary representations on the dynamical side and the notion of one-basedness on the model-theoretic side. For the moment, we do not even have a plausible conjecture of what the model-theoretic characterization of Hilbert-representable functions on a Roelcke precompact Polish group should be in general. Theorem~\ref{th:i:main} clearly fails in the continuous setting (for example, for the unitary group). While we do not have a counterexample to Corollary~\ref{c:omega-stable} for general separably categorical structures, we strongly suspect that it also fails.

As one of the goals of this paper is to provide a dictionary between model theory and abstract topological dynamics, we have tried to make the exposition fairly self-contained (apart from a couple of difficult model-theoretic results) and accessible to people working in both areas.

\medskip \noindent \textbf{Acknowledgements.}
Part of the present work was carried out during the trimester program \emph{Universality and Homogeneity} organized by the Hausdorff Research Institute for Mathematics in Bonn in 2013. We would like to thank the institute as well as the organizers of the program for the excellent conditions and stimulating atmosphere they provided. We are grateful to Michael Megrelishvili for valuable comments on a preliminary draft and to an anonymous referee for a careful reading of the paper and many comments that helped us improve the presentation.

All three authors were partially supported by the ANR contract GrupoLoco (ANR-11-JS01-0008). T.I. was also partially supported by the ANR contract ValCoMo (ANR-13-BS01-0006). T.T. was also partially supported by the ANR contract GAMME (ANR-14-CE25-0004).

\noindent\hrulefill

\section{Preliminaries}

\subsection{Compactifications of topological groups} Let $G$ be a topological group. The algebra of complex-valued continuous bounded functions on $G$ will be denoted by $\CG$. This algebra always carries the uniform norm, $\|f\|=\sup_{g\in G}|f(g)|$. The group $G$ admits a left and a right action on $\CG$, given, respectively, by $(gf)(h)=f(g^{-1}h)$ and $(fg)(h)=f(hg^{-1})$ for every $f\in\CG$ and $g,h\in G$. These actions are isometric but in general not continuous.

When considering subalgebras of $\CG$, we will always assume that these are unital and closed under complex conjugation. If we say that a subalgebra is \emph{closed}, we mean closed with respect to the uniform norm. \emph{Left-invariant}, \emph{right-invariant} and \emph{bi-invariant} refer to the actions of $G$ defined above.

A \emph{compactification} of $G$ is a compact Hausdorff space $X$ equipped with a continuous left action of $G$, together with a continuous $G$-map $\alpha \colon G\to X$ with dense image (where $G$ carries the natural left action on itself). This is the same as choosing a point $x_0 \in X$ with a dense orbit; then one can simply define $\alpha(g) = g \cdot x_0$. Such a pair $(X, x_0)$ is often called a \emph{$G$-ambit} in the literature.

To every compactification $\alpha \colon G\to X$ we associate the algebra $\mcA(\alpha):=C(X)\circ\alpha$ consisting of those functions in $\CG$ that factor through $\alpha$. The algebra $\mcA(\alpha)$ is always left-invariant, and the compactification will be called \emph{bi-invariant} if $\mcA(\alpha)$ is also right-invariant.

Given two compactifications $\alpha_X \colon G\to X$ and $\alpha_Y \colon G\to Y$, we say that $\alpha_Y$ is a \emph{$G$-factor} of $\alpha_X$ (or simply that $Y$ is a factor of $X$) if there is a continuous surjective $G$-map $\pi \colon X\to Y$ such that $\alpha_Y = \pi \circ \alpha_X$. If such a factor map exists, it is always unique.

A tool that we will use throughout the paper is \emph{Gelfand duality}: the contravariant equivalence between the category of compact Hausdorff spaces with continuous maps and the category of commutative, unital $\mathrm{C}^*$-algebras with algebra homomorphisms which is given by the functors $X \mapsto C(X)$ and $A \mapsto \hat A$. Here, $\hat A\coloneqq\operatorname{Hom}(A, \mbC)$ is the compact space of unital algebra homomorphisms $A\to\mbC$ endowed with the topology of pointwise convergence, and is called the \emph{Gelfand space} of $A$. In particular, one can identify $A$ with $C(\hat A)$. See, for example, \cite[Chapter~1]{Folland1995} for details. This is similar to the duality between Boolean algebras and their Stone spaces, which is perhaps more familiar to logicians.

We will be particularly interested in algebras of the form $\mcA(\alpha)$ for some compactification $\alpha$ of $G$. Among them there is a maximal one, the algebra $\RUC(G)$ of right uniformly continuous, bounded functions. (A function $f\in\CG$ is \emph{right uniformly continuous} if the orbit map $g\in G\mapsto gf\in\CG$ is norm-continuous.) If $\alpha\colon G\to X$ is a compactification of $G$, then $\mcA(\alpha) \cong C(X)$ is a left-invariant, closed subalgebra of $\RUC(G)$. Conversely, if $A$ is a left-invariant, closed subalgebra of $\RUC(G)$, then $X=\hat A$ is equipped with a continuous $G$-action (namely, $(gx)(f)=x(g^{-1}f)$), and the natural map $\alpha\colon G\to X$ (given by $\alpha(g)(f)=f(g)$) is a compactification of $G$, which satisfies $\mcA(\alpha)=A$.
We list below some facts and conventions regarding this duality that we use throughout the paper:
\begin{itemize}
\item When considering compactifications, we may omit the map $\alpha$ and refer only to the space~$X$ if no confusion arises. In particular, we may write $\mcA(X)$ instead of $\mcA(\alpha)$. 
 
\item A compactification $X_1$ is a factor of $X_2$ iff $\mcA(X_1) \subset \mcA(X_2)$. If $\mcA(X_1) \subset \mcA(X_2)$, then, under the identification $X_i=\operatorname{Hom}(\mcA(\alpha_i), \mbC)$, the factor map $\pi\colon X_2\to X_1$ is simply the restriction of homomorphisms.

\item (Stone--Weierstrass) Let $A_0\subset A\subset\CG$, where $A$ is a closed subalgebra and $A_0$ is any set. Then, $A$ is the closed subalgebra generated by $A_0$ iff $A_0$ separates the points of $\hat A$ (here we make the usual identification of $A$ with $C(\hat A)$).

\item A closed subalgebra $A\subset C(G)$ is separable if and only if $\hat A$ is metrizable.
\end{itemize}

\subsection{The Fourier--Stieltjes algebra and the WAP algebra of a topological group} Recall that if $\mcH$ is a Hilbert space, its unitary group $\UH$ equipped with the strong operator topology (pointwise convergence) is a topological group.

\begin{defin} A \emph{(unitary) matrix coefficient} of a topological group $G$ is a function $f\in\CG$ of the form $$f(g)=\langle v,\pi(g)w\rangle = \langle \pi(g)^{-1}v, w \rangle,$$ where $\pi \colon G\to\UH$ is a continuous unitary representation and $v,w\in\mcH$. We will use the notation $f=m_{v,w}$, or $f=m^\pi_{v,w}$ if we wish to specify $\pi$.\end{defin}

By considering orthogonal sums, tensor products, and duals of representations, one sees that the matrix coefficients of a topological group $G$ form a subalgebra of $\CG$. The family of all matrix coefficients of $G$ is called the \emph{Fourier--Stieltjes algebra of $G$}, and is denoted by $B(G)$.

Next we recall the definition of weakly almost periodic functions, Grothendieck's double limit criterion, and the reflexive representation theorem of Megrelishvili. (See \cite{groCriteres}, Th\'eor\`eme~6, and \cite{megFragmentability}, Theorem~5.1.)

\begin{defin}\label{def:wap}
A function $f\in\CG$ is \emph{weakly almost periodic} if the following equivalent conditions hold:
\begin{enumerate}
\item The orbit $Gf$ is precompact (i.e., has compact closure) for the weak topology of $\CG$.
\item For all sequences $g_i,h_j\in G$, the following limits coincide whenever they exist: $$\lim_i\lim_jf(g_ih_j)=\lim_j\lim_if(g_ih_j).$$
\item There exists a continuous, isometric representation $\pi \colon G\to\Iso(V)$ on a reflexive Banach space $V$ and vectors $v\in V$, $w\in V^*$ such that, for all $g\in G$, $$f(g)=\langle \pi(g)^{-1}v, w\rangle.$$
\end{enumerate}
\end{defin}

It follows easily that the family $\WAP(G)$ of weakly almost periodic functions on $G$ is a closed, bi-invariant subalgebra of $\RUC(G)$ containing $B(G)$. On the other hand, $B(G)$ is almost never closed in $\CG$ (see the beginning of Section~\ref{B(G) of olig groups}). Following \cite[\textsection 6]{glamegSurvey}, we will denote the closure $\overline{B(G)}$ by $\Hilb(G)$.
The algebra $B(G)$ is bi-invariant, hence so is $\Hilb(G)$.

Thus we have $$\Hilb(G)\subset\WAP(G),$$ or, equivalently: the \emph{Hilbert compactification} $G\to H(G)$ associated to the closed left-invariant algebra $\Hilb(G)$ is a $G$-factor of the \emph{WAP compactification} $G\to W(G)$ associated to $\WAP(G)$. We will review the main properties of these compactifications in Section~\ref{Semitop semigroup comp}.

Finally, we recall that a function $f \colon G\to\mbC$ is \emph{Roelcke uniformly continuous} if the map $(g,g')\in G\times G\mapsto gfg'\in\CG$ is norm-continuous. The family of all Roelcke uniformly continuous functions on $G$ is a closed, bi-invariant subalgebra of $\RUC(G)$, denoted by $\UC(G)$. We always have $\WAP(G)\subset\UC(G)$ (see, for instance, \cite{glamegSurvey}, Theorem~3.19).

\begin{defin}
Let $G$ be a topological group.
\begin{enumerate}[(i)]
\item $G$ is \emph{Eberlein} if $\Hilb(G)=\WAP(G)$.
\item $G$ is a \emph{WAP group} if $\WAP(G)=\UC(G)$.
\item $G$ is \emph{strongly Eberlein} if $\Hilb(G)=\UC(G)$.
\end{enumerate}
\end{defin}

In his fundamental work \cite{eberlein49}, Eberlein introduced weakly almost periodic functions (in the context of locally compact abelian groups) and proved the inclusion $B(G)\subset\WAP(G)$. In fact, all his examples of WAP functions lay in the closure of $B(G)$. Rudin writes in \cite{rudinWeak} that Eberlein asked him whether the closure of $B(G)$ may in fact coincide with $\WAP(G)$. Of course, by the Peter--Weyl theorem, this is the case for compact groups (indeed, $\Hilb(G)=\CG$). However, Rudin showed that this is not true in general. As an example, he exhibited a concrete function $f\in\WAP(\mbZ)\setminus\Hilb(\mbZ)$. Later, Chou~\cite{chou82} proved that the inclusion $\Hilb(G)\subset\WAP(G)$ is strict for any non-compact, locally compact, nilpotent group. On the other hand, he remarked that equality does hold for some non-compact, locally compact groups, and introduced the name \emph{Eberlein} for this class. The definitions of \emph{WAP groups} and \emph{strongly Eberlein groups} were introduced by Glasner and Megrelishvili in \cite{glamegSurvey}.

Examples of non-compact Eberlein groups include $\operatorname{SL}_n(\mathbb{R})$ (and any semisimple Lie group with finite centre; see \cite{veech79}), the unitary group $U(\ell^2)$ (essentially \cite{Uspenskij1998}), the group $\Aut(\mu)$ of measure-preserving transformations of the unit interval \cite{glasnerAutmu}, and the symmetric group of a countable set, $S_\infty$ \cite{glamegSurvey}. The last three are in fact strongly Eberlein. We will give some new examples in Subsection~\ref{Eberlein pro-oligomorphic}.

\subsection{Representations on Hilbert spaces}

Let $X$ be a compactification of a Polish group $G$. We say that $X$ is \emph{Hilbert-representable} if there exist a Hilbert space $\mcH$, an embedding $\iota\colon X\to \mcH$ (where $\mcH$ carries the weak topology) and a unitary representation $\pi\colon G\to\UH$ such that
$\iota(gx) = \pi(g)\iota(x)$
for all $x\in X$ and $g\in G$. By the Riesz representation theorem, this definition coincides, for the class of Hilbert spaces, with the notion of $\mathcal{K}$-representability given in the introduction.

Given a function $f\in\RUC(G)$, let $X_f$ be the compactification of $G$ associated to the left-invariant closed subalgebra of $\RUC(G)$ generated by $f$. In \cite[\textsection 2]{glaweiHilbert}, it is observed that $X_f$ is Hilbert-representable whenever $f$ is \emph{positive definite}, in the case $G=\mbZ$; more generally, the following holds.

\begin{lem} If $f\in B(G)$, then $X_f$ is Hilbert-representable.\end{lem}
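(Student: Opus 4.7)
The plan is to realise $X_f$ directly as a weakly compact, $G$-invariant subset of a Hilbert space, thereby exhibiting a Hilbert representation tautologically. Write $f = m^\pi_{v,w}$ for some continuous unitary representation $\pi\colon G\to\UH$ and vectors $v,w\in\mcH$. First, I reduce to the case where $v$ is cyclic for $\pi$: setting $\mcH_0 \coloneqq \overline{\operatorname{span}\,\pi(G)v}$, a closed $G$-invariant subspace of $\mcH$, and letting $P$ denote the orthogonal projection onto $\mcH_0$, the $G$-invariance of $\mcH_0$ gives $P\pi(g)=\pi(g)P$, so
\begin{equation*}
f(g) \;=\; \langle v, \pi(g)w\rangle \;=\; \langle v, \pi(g)Pw\rangle \qquad \text{for all } g\in G.
\end{equation*}
After replacing $\mcH$ by $\mcH_0$, $\pi$ by its restriction, and $w$ by $Pw$, I may assume that $v$ is cyclic.

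Next, define $Y$ as the weak closure of the orbit $\pi(G)w$ in $\mcH$. Since $\|\pi(g)w\| = \|w\|$ for every $g$, $Y$ lies in the closed ball of radius $\|w\|$, which is weakly compact by Banach--Alaoglu. The orbit map $\alpha(g) \coloneqq \pi(g)w$ is norm continuous (hence weakly continuous) with dense image in $Y$, and the formula $g\cdot y\coloneqq\pi(g)y$ defines an action of $G$ on $Y$, since each $\pi(g)$ is weakly continuous and preserves $\pi(G)w$. Joint continuity of this action follows from a standard bounded-set argument: for $g_n\to g$ in $G$ and $y_n\to y$ weakly in $Y$, and any $u\in\mcH$,
\begin{equation*}
\langle u, \pi(g_n)y_n\rangle \;=\; \langle \pi(g_n^{-1})u, y_n\rangle \;\longrightarrow\; \langle \pi(g^{-1})u, y\rangle \;=\; \langle u, \pi(g)y\rangle,
\end{equation*}
using strong continuity of $\pi$ together with the uniform bound $\|y_n\|\le\|w\|$. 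Hence $(Y,w)$ is a $G$-ambit, Hilbert-representable via the inclusion $Y\hookrightarrow\mcH$ and the representation $\pi$.

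It remains to identify this ambit with $X_f$, i.e.\ to show $\mcA(Y) = \mcA(X_f)$. By Stone--Weierstrass applied to the weakly compact space $Y$, whose points are separated by the continuous linear functionals $y\mapsto\langle u,y\rangle$ with $u\in\mcH$, $\mcA(Y)$ is the closed subalgebra of $\CG$ generated by $\{m_{u,w} : u\in\mcH\}$. On the other hand, the left translates of $f$ compute as
\begin{equation*}
(hf)(g) \;=\; \langle v, \pi(h^{-1}g)w\rangle \;=\; \langle \pi(h)v, \pi(g)w\rangle \;=\; m_{\pi(h)v,\,w}(g),
\end{equation*}
and the linear map $u\mapsto m_{u,w}$ from $\mcH$ to $\CG$ is bounded of norm at most $\|w\|$. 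Cyclicity of $v$ then forces the closed linear span of $\{hf : h\in G\}$ in $\CG$ to equal $\{m_{u,w}: u\in\mcH\}$, so the closed left-invariant subalgebra generated by $f$ coincides with $\mcA(Y)$. Hence $X_f\cong Y$ and the lemma follows. The subtlest point is the reduction to the cyclic case: without it, the translates of $f$ only produce matrix coefficients $m_{u,w}$ for $u$ in the cyclic subspace of $v$, and $X_f$ would typically be a proper factor of the naive ambit $\overline{\pi(G)w}$ rather than that ambit itself.
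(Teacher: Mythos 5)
Your proof is correct and follows essentially the same route as the paper: reduce to the case where $v$ is cyclic (the paper does this with two successive projections, you with one, which suffices since the projection onto $\overline{\operatorname{span}}\,\pi(G)v$ commutes with $\pi$), take the weak closure of the orbit $\pi(G)w$, and identify the resulting ambit with $X_f$ via Stone--Weierstrass using that the translates $hf$ are exactly the matrix coefficients $m_{\pi(h)v,w}$. The only cosmetic imprecision is that the closed linear span of $\{hf\}$ equals the \emph{closure} of $\{m_{u,w}:u\in\mcH\}$ rather than that set itself, which is harmless since one passes to generated closed algebras anyway.
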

\begin{proof}
Write $f=m^{\pi_0}_{v_0,w_0}$ for some continuous unitary representation $\pi_0 \colon G\to U(\mcH_0)$. Let $\mcH_1$ be the closed linear span of $\pi(G)w_0$ and let $v$ be the orthogonal projection of $v_0$ to $\mcH_1$. Next let $\mcH$ be the closed linear span of $\pi(G)v$ and let $w$ be the orthogonal projection of $w_0$ to $\mcH$. Consider the restriction $\pi = \pi_0|_\mcH$. Then $f=m^\pi_{v,w}$.

Recall that the unit ball of a Hilbert space is weakly compact; hence, if we let $Z$ be the weak closure of $\pi(G)w$ in $\mcH$, this is naturally a (Hilbert-representable) compactification of $G$ via the map $g\mapsto\pi(g)w$. Consider for each $h\in G$ the function $F_h\in C(Z)$, $F_h(z)=\langle\pi(h)v,z\rangle$, and note that $F_h(\pi(g)w)=hf(g)$. Since $\mcH$ is generated by $\pi(G)v$, we have $z=z'$ in $Z$ iff for every $h$ we have $\langle\pi(h)v,z-z'\rangle=0$, i.e., $F_h(z)=F_h(z')$; that is, the functions $F_h$ separate points of $Z$. Hence, by the Stone--Weierstrass theorem, $\mcA(Z)$ is the closed algebra generated by $Gf$. In other words, $Z$ is isomorphic to $X_f$.
\end{proof}

In contrast, if instead of a matrix coefficient we take any $f\in\Hilb(G)$, it is unknown whether $X_f$ is necessarily Hilbert-representable; see Question \ref{question factors of H(G)} below.

\begin{prop}\label{Equiv Hilb-rep} Let $\alpha\colon G\to X$ be a metrizable compactification of $G$. Then the following are equivalent:
\begin{enumerate}
\item $\alpha$ is Hilbert-representable.
\item $\mcA(\alpha)=\overline{\mcA(\alpha)\cap B(G)}$.
\end{enumerate}
\end{prop}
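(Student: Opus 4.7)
The plan is to deduce $(1)\Rightarrow(2)$ from a Stone--Weierstrass argument and to deduce $(2)\Rightarrow(1)$ by bootstrapping the preceding lemma (which supplies a Hilbert representation for each single $f\in B(G)$) to all of $\alpha$ via a countable Hilbertian direct sum; metrizability of $X$ will guarantee that countably many pieces suffice.

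For the forward direction I would fix a Hilbert representation $\iota\colon X\to\mcH$, $\pi\colon G\to\UH$ of $\alpha$ and set $w_0=\iota(\alpha(e))$. For each $v\in\mcH$, the weakly continuous linear functional $F_v\colon x\mapsto\langle v,\iota(x)\rangle$ lies in $C(X)$, and its pullback $F_v\circ\alpha(g)=\langle v,\pi(g)w_0\rangle$ is a matrix coefficient of $\pi$; hence $F_v\circ\alpha\in\mcA(\alpha)\cap B(G)$. Since $\iota$ is a weak embedding the family $\{F_v\}_{v\in\mcH}$ separates points of $X$, and the Stone--Weierstrass item of the preliminaries then implies that the closed subalgebra of $\mcA(\alpha)=C(X)$ they generate is all of $\mcA(\alpha)$. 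As these functions already lie in $\mcA(\alpha)\cap B(G)$, this gives $\overline{\mcA(\alpha)\cap B(G)}=\mcA(\alpha)$, which is (2).

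For the converse, using separability of $\mcA(\alpha)$ I would choose a countable dense family $\{f_i\}_i\subset\mcA(\alpha)\cap B(G)$ and write each $f_i=m^{\pi_i}_{v_i,w_i}$ using the cyclic data produced in the proof of the preceding lemma, so that $\pi_i$ acts on a Hilbert space $\mcH_i$ in which $v_i$ is cyclic (that is, $\mcH_i=\overline{\mathrm{span}}\,\pi_i(G)v_i$); this also gives $\mcA(X_{f_i})\subset\mcA(\alpha)$. After rescaling $w_i$ so that $\sum_i\|w_i\|^2<\infty$, the vector $w=(w_i)_i$ lies in $\mcH=\bigoplus_i\mcH_i$ and $\pi=\bigoplus_i\pi_i$ is a strongly continuous unitary representation of $G$ on $\mcH$. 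Let $Y\subset\mcH$ be the weak closure of $\pi(G)w$; this is a Hilbert-representable compactification of $G$, and it remains to identify $Y$ with $X$. By Stone--Weierstrass on $Y$, the algebra $\mcA(Y)$ is generated by the functions $g\mapsto\langle u,\pi(g)w\rangle=\sum_i\langle u_i,\pi_i(g)w_i\rangle$ for $u=(u_i)_i\in\mcH$; by the cyclicity of $v_i$ each summand lies in $\mcA(X_{f_i})\subset\mcA(\alpha)$, and the series converges uniformly in $g$ by Cauchy--Schwarz, so $\mcA(Y)\subset\mcA(\alpha)$. Conversely, choosing $u$ supported in a single coordinate and appropriately scaled to cancel the rescaling of $w_i$ recovers each $f_i$ exactly, placing the dense family $\{f_i\}$ inside $\mcA(Y)$; hence $\mcA(\alpha)\subset\mcA(Y)$ and the two algebras coincide.

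The main technical point is this last identification, which depends crucially on the cyclicity property of $v_i$ built into the preceding lemma: without it, the matrix coefficients $g\mapsto\langle u_i,\pi_i(g)w_i\rangle$ would not all lie in $\mcA(X_{f_i})$, and the inclusion $\mcA(Y)\subset\mcA(\alpha)$ could fail. Modulo this, the remaining verifications (convergence of the rescaled norm series, strong continuity of the direct-sum representation $\pi$, and uniform convergence of the series defining matrix coefficients against $w$) are routine.
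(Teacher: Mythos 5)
Your proof is correct and follows essentially the same route as the paper's: the forward direction is the same Stone--Weierstrass argument with the functionals $F_v$, and the converse uses the same construction (a countable dense subset of $\mcA(\alpha)\cap B(G)$, the representations supplied by the preceding lemma summed into $\bigoplus_i\mcH_i$ with a rescaled summable vector $w$, and the weak closure of $\pi(G)w$). The only cosmetic difference is in the final identification of $Y$ with $X$: the paper embeds $Y$ into $\prod_i X_{f_i}$ and applies Stone--Weierstrass to the projections, whereas you compute the generators of $\mcA(Y)$ directly as matrix coefficients and use the cyclicity of the $v_i$ to get $\mcA(Y)\subset\mcA(\alpha)$ and single-coordinate vectors to get the reverse inclusion; both arguments establish the same two inclusions for the same object.
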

\begin{proof}
Suppose $(\iota,\pi)$ is a representation of $(X,G)$ on a Hilbert space $\mcH$. The functions $F_v\colon w\mapsto\langle v,w\rangle$ separate points of $\mcH$, hence the algebra generated by $\{F_v\iota\alpha\}_{v\in\mcH}$ is dense in $\mcA(\alpha)$ and contained in~$B(G)$.

Conversely, suppose that (2) holds. The metrizability assumption on $X$ says that $\mcA(\alpha)$ is separable. Thus, let $B\subset\mcA(\alpha)\cap B(G)$ be a countable dense subset. By the previous lemma, for each $f\in B$ there is a representation $(\iota_f,\pi_f)$ of $(X_f,G)$ on a Hilbert space $\mcH_f$. We consider $$\mcH=\bigoplus_{f\in B}\mcH_f$$ and let $\pi=\bigoplus_{f\in B}\pi_f \colon G\to\UH$ be the orthogonal sum of the representations $\pi_f$. For each $f\in B$, let $w_f=\iota_f\alpha(1)\in\mcH_f$. Since $B$ is countable, by rescaling we may assume that $w=(w_f)_{f\in B}$ is summable, i.e., $w\in\mcH$. Now we define $\alpha'\colon G\to\mcH$ by $\alpha'(g)=\pi(g)w$, and let $Z$ be the weak closure of $\alpha'(G)$ in $\mcH$. Then the restriction $\alpha'\colon G\to Z$ is a Hilbert-representable compactification of~$G$, which we claim is isomorphic to~$\alpha$. To see this, we note first that $Z$ is $G$-isomorphic to a subspace of the product $\prod_{f\in B}X_f$; indeed, $\mcH$ (with the weak topology) is a subspace of the product space $\prod_{f\in B}\mcH_f$ (each $\mcH_f$ carrying the weak topology), hence the $G$-embedding $\prod_{f\in B}X_f\to\prod_{f\in B}\mcH_f$ (induced by the maps~$\iota_f$) restricts to a $G$-isomorphism between a subspace of $\prod_{f\in B}X_f$ and $Z$. Under this identification, the projection maps $Z\to X_f$ separate points of $Z$, and each composition $G\to Z\to X_f$ is just the compactification $G\to X_f$. Hence, by Stone--Weierstrass, $\mcA(\alpha')$ is the closed algebra generated by the algebras $\mcA(X_f)$, $f\in B$. Since $B$ is dense in $\mcA(\alpha)$, we deduce that $\mcA(\alpha')=\mcA(\alpha)$, which proves our claim.
\end{proof}

\begin{rem}\label{rem:Equiv Hilb-rep}\ 
\begin{enumerate}
\item\label{rem:Equiv Hilb-rep 1} A basic consequence of the first implication of the above proposition (which does not use the metrizability assumption) is that all Hilbert-representable compactifications of $G$ are factors of $H(G)$.
\item\label{rem:Equiv Hilb-rep 2} Another consequence is that $\Hilb(G)$ consists precisely of the $f\in\CG$ that factor through Hilbert-representable compactifications of $G$. One of the implications is immediate. For the other, if $f\in\Hilb(G)$, there is a sequence $f_n\to f$ with $f_n\in B(G)$, and hence $f$ factors through the compactification associated to the closed algebra $A$ generated by the functions $f_n$, which is metrizable (since $A$ is separable) and Hilbert-representable (by the proposition).
\end{enumerate}
\end{rem}

\begin{question}[\cite{glamegSurvey}, Question 5.12.3; \cite{megSelected}, Question 7.6]
  \label{question factors of H(G)}
  Are Hilbert-representable dynamical systems closed under factors? For ambits, and assuming $H(G)$ is metrizable, we may ask equivalently: are all factors of $H(G)$ Hilbert-representable?
\end{question}
This question has also been investigated in \cite{glaweiHilbert}. In Section~\ref{section factors of H(G)}, we will see that the answer is positive for pro-oligomorphic groups.

We should note that \emph{reflexively representable} dynamical systems are preserved under factors. In fact, the reflexively representable (or rather, when $W(G)$ is not metrizable, \emph{reflexively approximable}) compactifications of $G$ are exactly the factors of $W(G)$. See \cite{megHilb} and the references therein.

\noindent\hrulefill

\section{Semitopological semigroup compactifications}\label{Semitop semigroup comp}

\subsection{Definitions} A \emph{semitopological semigroup} is a semigroup that carries a topological structure such that the product operation is separately continuous (i.e., multiplying by an arbitrary fixed element on the left is continuous, and similarly on the right). We shall be interested in semitopological semigroups arising in the following manner.

\begin{defin}
A compactification $\alpha \colon G\to S$ is a \emph{semitopological semigroup compactification} if $S$ admits a semitopological semigroup law that makes $\alpha$ a homomorphism.
\end{defin}

\begin{rem}\label{rem:semitop semigp} Suppose $\alpha \colon G\to S$ is a semitopological semigroup compactification.
\begin{enumerate}
\item Then $S$ is in fact a monoid: $\alpha(1)$ is an identity.
\item By Lawson's joint continuity theorem, $\alpha(G)$ is a topological group (\cite{lawsonJoint}, Corollary~6.3).
\item\label{rem:semitop semigp.bi-inv} The compactification is bi-invariant.
\end{enumerate}
\end{rem}

Both the Hilbert and the WAP compactifications, $H(G)$ and $W(G)$, are semitopological semigroup compactifications. The semigroup law in $W(G)$ is given as follows: if $p=\lim g_i$ and $q=\lim h_j$, where $g_i,h_j$ belong to the homomorphic copy of $G$ in $W(G)$, then the product $pq$ is defined as the iterated limit $\lim_i\lim_j g_ih_j$. Grothendieck's double limit criterion (cf. Definition~\ref{def:wap}) and the bi-invariance of $W(G)$ ensure that the product is well-defined and yields a semitopological semigroup. The same construction works for $H(G)$ and other bi-invariant factors of $W(G)$. Conversely, one can use the double limit criterion to see that $W(G)$ is universal among semitopological semigroup compactifications of $G$ in the following sense.

\begin{fact}\label{factors of WAP} Let $S$ be a compactification of $G$. The following are equivalent:
\begin{enumerate}
\item $S$ is a semitopological semigroup compactification.
\item $S$ is a bi-invariant factor of $W(G)$.
\end{enumerate}
\end{fact}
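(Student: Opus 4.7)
For (1) $\Rightarrow$ (2), by Gelfand duality it suffices to show that $\mcA(\alpha)$ is a bi-invariant closed subalgebra of $\WAP(G)$. Bi-invariance is Remark~\ref{rem:semitop semigp}(\ref{rem:semitop semigp.bi-inv}). For the inclusion in $\WAP(G)$ I will verify Grothendieck's double-limit criterion (Definition~\ref{def:wap}(2)) for a function $f = F \circ \alpha$ with $F \in C(S)$: given sequences $g_i, h_j \in G$ for which both iterated limits of $f(g_i h_j)$ exist, I pass to subnets so that $\alpha(g_i) \to p$ and $\alpha(h_j) \to q$ in the compact space $S$. Using separate continuity of the semigroup operation in $S$, both iterated limits of $F(\alpha(g_i)\alpha(h_j))$ evaluate to $F(pq)$; since convergent sequence limits are preserved under subnets, this forces the original iterated limits to coincide.

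For (2) $\Rightarrow$ (1), let $\pi \colon W(G) \to S$ be the factor map and set $A = \mcA(\alpha)$, which by assumption is a bi-invariant closed subalgebra of $\WAP(G)$. The plan is to push the semigroup structure of $W(G)$ down to $S$ by declaring $\pi(p) \cdot \pi(q) := \pi(pq)$. The only non-trivial point is well-definedness, which I will deduce from the following key claim: for every $f \in A$ and every $q \in W(G)$, the function $R_q f \colon g \mapsto f(gq)$ on $G$ lies in $A$ and depends only on $\pi(q)$. Granting the claim, since $f(pq) = \widetilde{R_q f}(p)$ (using the continuous extension to $W(G)$), this value depends only on $\pi(p)$ (because $R_q f \in A$) and on $\pi(q)$ (by the claim). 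Separate continuity of the resulting operation on $S$ is then inherited from $W(G)$ via the universal property of the continuous surjection $\pi$ between compacta (for fixed $q$, the continuous map $p \mapsto \pi(pq)$ factors through $\pi$), and $\alpha = \pi \circ \alpha_W$ is automatically a semigroup homomorphism.

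The main obstacle is the key claim, and it is here that the WAP hypothesis enters crucially. To see $R_q f \in A$: pick a net $h_j \to q$ in $W(G)$, so that each $fh_j^{-1} \in A$ by right-invariance of $A$. Since $f \in \WAP(G)$, the orbit $fG$ is weakly relatively compact in $C(G)$ (Definition~\ref{def:wap}(1)), so any weak cluster point of $\{fh_j^{-1}\}$ agrees pointwise (test against $\delta_g$) with the continuous extension $g \mapsto f(gq) = (R_q f)(g)$, hence equals $R_q f$; thus $fh_j^{-1} \to R_q f$ weakly, and since $A$ is norm-closed and convex, hence weakly closed, we obtain $R_q f \in A$. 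For the dependence on $\pi(q)$ alone, rewrite $(R_q f)(g) = \widetilde{g^{-1} f}(q)$ and use that $g^{-1} f \in A$ by left-invariance, so that the value is unchanged when $q$ is replaced by any $q'$ with $\pi(q') = \pi(q)$.
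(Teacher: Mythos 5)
Your argument is correct. The paper itself does not prove this fact---it simply cites \cite[Ch.~III, \S 8, Corollary~8.5]{bergCompactright}---but the paragraph preceding the statement sketches exactly the strategy you carry out: the double-limit criterion shows that functions factoring through a semitopological semigroup compactification are WAP, and conversely the iterated-limit product on $W(G)$ descends to any bi-invariant factor. Your write-up supplies the details the paper outsources, and the two places where care is needed are handled properly: in $(1)\Rightarrow(2)$ the passage to subnets is harmless because the assumed iterated limits persist along subnets, and in $(2)\Rightarrow(1)$ the key claim that $R_q f\in\mcA(\alpha)$ correctly combines right-invariance, weak precompactness of the right orbit of a WAP function, and Mazur's theorem (norm-closed convex sets are weakly closed), with left-invariance then giving dependence on $\pi(q)$ alone. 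So this is a correct, self-contained proof of the standard fact, in line with the approach the paper alludes to.
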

\begin{proof}
See, for instance, \cite[Ch.\ III, \textsection 8]{bergCompactright}, Corollary 8.5.
\end{proof}

Given a reflexive Banach space $V$, the semigroup $\Theta(V)$ of linear contractions of $V$, $$\Theta(V)=\{T\in L(V):\|T\|\leq 1\},$$ endowed with the weak operator topology, is compact and semitopological. It turns out that every compact semitopological semigroup can be seen as a closed subsemigroup of $\Theta(V)$ for some reflexive Banach space $V$ \cite{shtern94,megreOperator}. Thus every compact semitopological semigroup is \emph{reflexively representable}.

\begin{defin}\label{Hilb-rep semigroups}
A semitopological semigroup $S$ is \emph{Hilbert-representable} if it can be embedded in the compact semitopological semigroup $\ThH$ of linear contractions of a Hilbert space $\mcH$.
\end{defin}

It is not difficult to see the following.

\begin{fact}\label{fact:universality of H(G)} Let $G$ be a topological group.
\begin{enumerate}
\item $H(G)$ is a Hilbert-representable semitopological semigroup, and is universal with this property among $G$-ambits (i.e., any other is a factor of $H(G)$).
\item In particular, $G$ is Eberlein if and only if $W(G)$ is Hilbert-representable as a semitopological semigroup.
\end{enumerate}
\end{fact}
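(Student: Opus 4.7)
The plan is to realize $H(G)$ directly as a closed subsemigroup of $\ThH$ for a suitable Hilbert space. Fix a universal continuous unitary representation $\pi\colon G\to\UH$, taken as the Hilbert-space orthogonal sum of a set of representatives of all cyclic continuous unitary representations of $G$; there is only a set of these up to unitary equivalence, so $\pi$ is well-defined. Let $S_\pi$ be the closure of $\pi(G)$ in $\ThH$ in the weak operator topology. A routine separate-continuity argument (applied first on the right and then on the left) shows that $S_\pi$ is a closed subsemigroup of the compact semitopological semigroup $\ThH$, and that the map $G\to S_\pi$ is a semitopological semigroup compactification of $G$. Moreover, since the product of $a = \lim_i \pi(g_i)$ and $b = \lim_j \pi(h_j)$ in $S_\pi$ is computed by the same iterated limit $\lim_i\lim_j \pi(g_i h_j)$ that defines multiplication in $W(G)$, the semigroup law of $S_\pi$ agrees with that inherited from $W(G)$.

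The main step is to identify $S_\pi$ with $H(G)$. The functionals $T\mapsto\langle v,Tw\rangle$ for $v,w\in\mcH$ are continuous on $\ThH$ and separate points there, so by Stone--Weierstrass $\mcA(S_\pi)$ is the closed algebra generated by the matrix coefficients of $\pi$. By restricting $w$ to the cyclic subrepresentation it generates and projecting $v$ orthogonally onto it (exactly as in the lemma preceding Proposition~\ref{Equiv Hilb-rep}), every $f\in B(G)$ is already a matrix coefficient of some cyclic subrepresentation of $\pi$, and hence of $\pi$ itself. Therefore $\mcA(S_\pi)=\overline{B(G)}=\Hilb(G)=\mcA(H(G))$, and $S_\pi\cong H(G)$ as compactifications. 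Since the semigroup laws on the two sides are given by the same iterated limits, this is an isomorphism of semitopological semigroups, yielding the Hilbert-representability of $H(G)$.

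For the universality in~(1), let $S$ be any Hilbert-representable semitopological semigroup compactification of $G$. By Remark~\ref{rem:Equiv Hilb-rep}(\ref{rem:Equiv Hilb-rep 1}), $S$ is already a factor of $H(G)$ as a $G$-compactification. Since both $S$ and $H(G)$ are bi-invariant factors of $W(G)$ by Fact~\ref{factors of WAP}, the factor map $H(G)\to S$ is automatically a semigroup homomorphism, as it arises from the common projection out of $W(G)$. Part~(2) then follows formally from~(1): if $G$ is Eberlein then $\Hilb(G)=\WAP(G)$, so $H(G)=W(G)$ and (1) provides the representation; conversely, if $W(G)$ is Hilbert-representable then by~(1) it is a factor of $H(G)$, and since $H(G)$ is itself a factor of $W(G)$ the two coincide, i.e., $G$ is Eberlein.

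The one delicate point is the set-theoretic bookkeeping for the universal $\pi$ together with the cyclic-decomposition step used to conclude that the matrix coefficients of $\pi$ already exhaust $B(G)$; both, however, are standard facts about unitary representations of topological groups.
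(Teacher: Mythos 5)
Your construction is correct and is in fact more explicit than what the paper offers: the statement is recorded there as a Fact, with the representability of $H(G)$ itself left unproved and the universality reduced to Remark~\ref{rem:Equiv Hilb-rep}(\ref{rem:Equiv Hilb-rep 1}) together with a citation of Lemma~4.5 of \cite{megHilb} for the coincidence, on metrizable semigroup ambits, of the two notions of Hilbert-representability. Your realization of $H(G)$ as the weak-operator closure of the image of a universal unitary representation inside $\ThH$, identified with $H(G)$ via Stone--Weierstrass after reducing arbitrary matrix coefficients to cyclic subrepresentations, is the standard argument behind the Fact, and it has the virtue of working for arbitrary $G$ with no metrizability hypothesis. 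The verification of part~(2) from part~(1) is also fine.

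The one step you should tighten is the universality argument. Remark~\ref{rem:Equiv Hilb-rep}(\ref{rem:Equiv Hilb-rep 1}) concerns compactifications that are Hilbert-representable as \emph{dynamical systems} (an equivariant embedding of the space into $\mcH$ with its weak topology), whereas your $S$ is assumed Hilbert-representable as a \emph{semitopological semigroup} (an embedding $\beta\colon S\to\ThH$), and passing from the latter to the former is exactly the non-trivial point the paper flags as ``essentially the case'' and outsources to \cite{megHilb}. You can bypass it entirely: if $\alpha\colon G\to S$ is the compactification map, then each $\beta\alpha(g)$ is an invertible contraction whose inverse $\beta\alpha(g^{-1})$ is again a contraction, hence unitary, and since the weak and strong operator topologies agree on $\UH$, the map $\beta\alpha$ is a continuous unitary representation of $G$. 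The functions $s\mapsto\langle v,\beta(s)w\rangle$ separate points of $S$ and restrict on $G$ to matrix coefficients of $\beta\alpha$, so by Stone--Weierstrass $\mcA(S)\subset\overline{B(G)}=\Hilb(G)$ and $S$ is a factor of $H(G)$ directly, with no appeal to the dynamical-system notion. With that substitution your proof is complete.
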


The universality of $H(G)$ is clear (as per Remark~\ref{rem:Equiv Hilb-rep}(\ref{rem:Equiv Hilb-rep 1})) if we admit that the two notions of representability on Hilbert spaces discussed so far coincide on semigroup ambits, which is essentially the case:

\begin{fact}
Let $\alpha\colon G\to S$ be a metrizable semitopological semigroup compactification of $G$. Then, $S$ is a Hilbert-representable semitopological semigroup if and only if $\alpha$ is a Hilbert-representable compactification.
\end{fact}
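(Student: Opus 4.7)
The plan is to prove the two directions separately. The ($\Leftarrow$) direction promotes the ambit representation into a semigroup embedding, while the ($\Rightarrow$) direction extracts matrix coefficients from a semigroup embedding and concludes via Proposition~\ref{Equiv Hilb-rep}.

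For ($\Leftarrow$), assume $\alpha$ is a Hilbert-representable compactification, witnessed by $\iota\colon S\to\mcH$ and $\pi\colon G\to\UH$ with $\iota(gx)=\pi(g)\iota(x)$. The first move is to replace $\mcH$ by the closed linear span of $\iota(S)$, which is $G$-invariant thanks to the intertwining relation. After this reduction, every matrix coefficient $m^\pi_{u,w}$ factors through $\alpha$: for $w=\iota(x)$ it is the pull-back of the continuous function $s\mapsto\langle u,\iota(sx)\rangle$ on $S$ (continuous by the separate continuity of multiplication in $S$ combined with the weak continuity of $\iota$), and the general case follows by linearity in $w$ together with the uniform estimate $|m^\pi_{u,w}(g)-m^\pi_{u,w'}(g)|\le\|u\|\,\|w-w'\|$, using that $\mcA(\alpha)$ is closed. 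This lets me define $\tilde\pi(s)\in\ThH$ unambiguously by $\langle u,\tilde\pi(s)w\rangle=F_{u,w}(s)$, where $F_{u,w}\in C(S)$ is the factoring of $m^\pi_{u,w}$; existence of the contraction is ensured by the WOT-compactness of $\ThH$ applied to any net $\pi(g_i)$ with $\alpha(g_i)\to s$. The resulting map $\tilde\pi\colon S\to\ThH$ is WOT-continuous by construction, and the separate continuity of multiplication in both $S$ and $\ThH$ together with the density of $\alpha(G)$ in $S$ makes it a semigroup homomorphism. For injectivity I would observe that $\tilde\pi(s)\iota(1_S)=\iota(s)$ (verified on $\alpha(G)$ by the intertwining and extended by continuity), so $\tilde\pi$ inherits injectivity from $\iota$; compactness of $S$ then upgrades this to a topological embedding.

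For ($\Rightarrow$), assume $\tau\colon S\hookrightarrow\ThH$ is a semigroup embedding and set $\pi=\tau\circ\alpha$. Since elements of $\alpha(G)$ are invertible in $S$, each $\pi(g)$ is an invertible contraction of $\mcH$ and therefore a unitary operator; on $\UH$ the WOT and SOT coincide, so $\pi$ is a continuous unitary representation of $G$. For any $u,v\in\mcH$ the matrix coefficient $m^\pi_{u,v}$ factors through $\alpha$ via the continuous function $s\mapsto\langle u,\tau(s)v\rangle$, and therefore lies in $\mcA(\alpha)\cap B(G)$. These matrix coefficients separate the points of $S$ by injectivity of $\tau$, so by Stone--Weierstrass the unital $*$-subalgebra they generate is dense in $C(S)=\mcA(\alpha)$. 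Hence $\mcA(\alpha)=\overline{\mcA(\alpha)\cap B(G)}$, and Proposition~\ref{Equiv Hilb-rep} --- which is where the metrizability hypothesis enters --- concludes that $\alpha$ is a Hilbert-representable compactification.

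The main obstacle sits in ($\Leftarrow$): arranging that \emph{every} matrix coefficient of $\pi$ factors through $\alpha$, since a priori only those with $w\in\iota(S)$ are obviously well-behaved. The reduction of $\mcH$ to the closed linear span of $\iota(S)$ is the crucial trick, since the cyclicity of this $G$-invariant subspace lets one transfer the factoring property to all $w$ by density and the bilinearity of matrix coefficients; without this, one could only build $\tilde\pi$ on a quotient of $S$ rather than on $S$ itself.
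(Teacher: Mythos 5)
The paper does not actually prove this fact in-text; it simply cites Lemma~4.5 of \cite{megHilb}. Taken as a self-contained argument, your proof is essentially correct and follows the natural route. In the $(\Leftarrow)$ direction the key step really is cutting $\mcH$ down to the closed linear span of $\iota(S)$ (which is $\pi(G)$-invariant by the intertwining relation) so that \emph{every} matrix coefficient of $\pi$ factors through $\alpha$; the WOT-compactness of $\ThH$ then yields the extension $\tilde\pi$, the two-step density argument gives multiplicativity, and the identity $\tilde\pi(s)\iota(\alpha(1))=\iota(s)$ transfers injectivity from $\iota$ to $\tilde\pi$, with compactness of $S$ upgrading this to an embedding. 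In the $(\Rightarrow)$ direction the reduction to Proposition~\ref{Equiv Hilb-rep} is correct, and you rightly identify that this is the only place where metrizability enters (it is not needed for $(\Leftarrow)$).

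One step in $(\Rightarrow)$ needs repair: an embedding $\tau\colon S\to\ThH$ of semitopological semigroups need not be unital, so $\tau(\alpha(1))$ is a priori only an idempotent contraction $P$, and $\pi(g)=\tau(\alpha(g))$ then satisfies $\pi(g)\pi(g^{-1})=P$; if $P\neq I$, the operator $\pi(g)$ annihilates $\ker P$ and is therefore not invertible on $\mcH$, contrary to your claim that it is ``an invertible contraction and therefore a unitary operator.'' The fix is a one-line reduction: an idempotent contraction on a Hilbert space is the orthogonal projection onto its range, and since $\tau(s)=\tau(\alpha(1))\tau(s)\tau(\alpha(1))=P\tau(s)P$ for all $s\in S$, the corestriction of $\tau$ to $\Theta(P\mcH)$ is still a topological embedding of semigroups; on $P\mcH$ each $\pi(g)$ is an invertible contraction with contractive inverse, hence unitary, and the rest of your argument goes through verbatim. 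With that adjustment the proof is complete.
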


See Lemma 4.5 in \cite{megHilb}. In the non-metrizable case, Definition \ref{Hilb-rep semigroups} is the correct property to consider, while Hilbert-representability of dynamical systems has to be relaxed. However, the semigroups that we study in this paper are metrizable.

\begin{defin}
Let $\alpha\colon G\to S$ be a semitopological semigroup compactification. We will say that $\alpha$ is \emph{$*$-closed} or, equivalently, that $\alpha$ is a \emph{semitopological $*$-semigroup compactification}, if the inverse operation on the group $\alpha(G)$ extends to a continuous map $^* \colon S\to S$. (Then $^*$ is automatically an \emph{involution on $S$}, i.e., $(p^*)^*=p$ and $(pq)^*=q^*p^*$ for every $p,q\in S$.)
\end{defin}

\begin{fact}\label{remark *-closed} The map $\alpha\colon G\to S$ is $*$-closed if and only if, whenever $f\in\mcA(\alpha)$, the function $g\mapsto f(g^{-1})$ is also in $\mcA(\alpha)$.
\end{fact}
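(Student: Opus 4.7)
The plan is a direct application of Gelfand duality, which identifies $\mcA(\alpha)$ with $C(S)$ and identifies continuous self-maps $\varphi\colon S\to S$ with unital $*$-algebra endomorphisms $C(S)\to C(S)$ via $F\mapsto F\circ\varphi$. Under this correspondence, a candidate inversion map ${}^*\colon S\to S$ should match the operator $T\colon \mcA(\alpha)\to \mcA(\alpha)$ defined pointwise by $(Tf)(g)=f(g^{-1})$. The equivalence in the fact then reduces to checking that the existence of ${}^*$ is equivalent to $T$ being a well-defined map from $\mcA(\alpha)$ to itself.

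The forward direction is immediate. Assuming ${}^*$ exists and writing $f=F\circ\alpha$ with $F\in C(S)$, the function $F\circ{}^*$ lies in $C(S)$, and for every $g\in G$ one has $(F\circ{}^*)(\alpha(g))=F(\alpha(g^{-1}))=f(g^{-1})$. Hence $g\mapsto f(g^{-1})$ factors through $\alpha$ and thus lies in $\mcA(\alpha)$.

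For the converse, I would first verify that $T$ (well-defined on $\mcA(\alpha)$ by hypothesis) is a unital $*$-algebra homomorphism: linearity, multiplicativity, preservation of complex conjugation, and the involutive identity $T^2=\mathrm{id}$ all follow from pointwise identities using $(g^{-1})^{-1}=g$. So $T$ is a $*$-automorphism of the commutative $C^*$-algebra $\mcA(\alpha)$. By Gelfand duality it induces a continuous self-map of $S=\operatorname{Hom}(\mcA(\alpha),\mbC)$, namely $\phi\mapsto \phi\circ T$; I call this map ${}^*$. Evaluating at $p=\alpha(g)$ yields $\alpha(g)^*(f)=T(f)(g)=f(g^{-1})=\alpha(g^{-1})(f)$, so $\alpha(g)^*=\alpha(g^{-1})$, and ${}^*$ extends inversion on $\alpha(G)$ as required.

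The argument presents no serious obstacle: the content of the fact is simply the Gelfand duality dictionary between continuous extensions of inversion to $S$ and closure of $\mcA(\alpha)$ under the pointwise inversion of functions, and once $T$ is seen to be a $*$-homomorphism, everything else is formal. The only minor point to keep in mind is that the hypothesis only asserts ``$g\mapsto f(g^{-1})$ lies in $\mcA(\alpha)$ for every $f$'', and one must upgrade this to the \emph{a priori} stronger statement that $T$ is a map of $C^*$-algebras — but this upgrade is automatic from the pointwise algebraic identities, so no real work is required.
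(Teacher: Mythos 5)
Your argument is correct and is essentially the paper's own proof: the forward direction pulls back $F\circ{}^*$ along $\alpha$, and the converse defines the involution as the Gelfand dual of $f\mapsto f^*$, i.e.\ $p^*(f)=p(f^*)$, exactly as in the paper (you merely spell out the routine verification that $f\mapsto f^*$ is a unital $*$-homomorphism, which the paper leaves implicit). No gaps.
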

\begin{proof}
  Let us denote the function $g\mapsto f(g^{-1})$ by $f^*$. Suppose $\alpha$ is $*$-closed and let $f\in\mcA(\alpha) = C(S)$. Then the function $p\mapsto f(p^*)$ belongs to $C(S)$, whence its restriction to $G$, which is $f^*$, belongs to $\mcA(\alpha)$. For the other direction, the 
involution is given by $p^*(f)=p(f^*)$ for $f\in\mcA(\alpha)$ and $p\in 
S=\widehat{\mcA(\alpha)}$.
\end{proof}

It follows readily that $W(G)$ is $*$-closed, for instance by applying Grothendieck's double limit criterion to check the above condition. By looking at unitary matrix coefficients, it is also easy to deduce that $H(G)$ is $*$-closed; more generally, we have the following.

\begin{prop}\label{Hilb rep is *-closed} Every Hilbert-representable semitopological semigroup compactification is $*$-closed.\end{prop}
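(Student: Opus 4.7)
The plan is to transport the adjoint operation from $\Theta(\mcH)$ to $S$. Suppose $j\colon S\hookrightarrow\Theta(\mcH)$ is an embedding of semitopological semigroups witnessing Hilbert-representability. I want to show two things: (a) the adjoint $T\mapsto T^*$ on $\Theta(\mcH)$ is continuous in the weak operator topology and preserves $j(S)$, and (b) on the subgroup $j(\alpha(G))$ this adjoint agrees with the group inverse. Granted (a) and (b), the map $p\mapsto j^{-1}(j(p)^*)$ is a continuous extension of the inverse on $\alpha(G)$ to $S$, which is exactly what $*$-closedness demands.

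For (a), the continuity of the adjoint in WOT is immediate from the identity $\langle T^*v,w\rangle=\overline{\langle Tw,v\rangle}$: if $T_\lambda\to T$ weakly then so does $T_\lambda^*\to T^*$. To see that $j(S)$ is stable under the adjoint, note that $j(S)$ is compact, hence closed in $\Theta(\mcH)$, and that $\alpha(G)$ is dense in $S$; so it suffices to check invariance on $j(\alpha(G))$, which follows from (b) together with the fact that $\alpha(G)$ is closed under the group inverse.

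For (b), the key observation is that every $j(\alpha(g))$ is a unitary operator. Indeed, since $j$ is a semigroup homomorphism, both $j(\alpha(g))$ and $j(\alpha(g^{-1}))=j(\alpha(g))^{-1}$ lie in $\Theta(\mcH)$, i.e.\ both are contractions; but an invertible contraction whose inverse is also a contraction is an isometry, and a surjective isometry of $\mcH$ is unitary. For a unitary operator, $U^*=U^{-1}$, so on $j(\alpha(G))$ the adjoint indeed coincides with the inverse, as required.

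The only mildly delicate point is the unitarity argument in (b); everything else is a direct transcription of the standard fact that the adjoint is a WOT-continuous involution on bounded operators. Once unitarity is established, the automatic properties $(p^*)^*=p$ and $(pq)^*=q^*p^*$ on $S$ pass from the analogous properties in $\Theta(\mcH)$ via the embedding $j$, so no further verification is needed.
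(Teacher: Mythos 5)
Your proof is correct and follows essentially the same route as the paper: transport the WOT-continuous adjoint of $\Theta(\mcH)$ back to $S$, using density of $\alpha(G)$ and compactness of $S$ to see that the image of $S$ is stable under the adjoint. The only difference is that you spell out why the adjoint agrees with the inverse on the image of $G$ (an invertible contraction with contractive inverse is unitary), a point the paper leaves implicit.
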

\begin{proof}
Let $\alpha \colon G\to S$ be a compactification with an embedding $\beta \colon S\to\ThH$. It suffices to see that the image of $\beta$ is closed under the adjoint operation $^* \colon \ThH\to\ThH$; indeed, then we can define $s^*$ as the preimage of $\beta(s)^*$, and this gives a continuous map $^* \colon S\to S$ that extends the inverse operation on $\alpha(G)$. Now, if $s\in S$ is the limit of a net $\alpha(g_i)\in\alpha(G)$, then $\beta\alpha(g_i^{-1})$ converges to $\beta(s)^*$; by compactness, we may assume that $\alpha(g_i^{-1})$ converges to some $s'\in S$, so $\beta(s')=\beta(s)^*$. Hence $\beta(s)^*\in\beta(S)$.
\end{proof}

\subsection{Inverse semigroups}

In this short subsection, we review some general notions of the theory of semigroups, and some particular facts that hold for compact semitopological $*$-semigroups with a dense subgroup.

An element $e$ in a semigroup $S$ is an \emph{idempotent} if $e^2=e$. If $S$ has an involution~$^*$, then $e\in S$ is \emph{self-adjoint} if $e^*=e$.

\begin{defin} Let $S$ be a semigroup.
\begin{enumerate}[(i)]
\item An element $p\in S$ is \emph{regular} if there exists $q\in S$ such that $p=pqp$.
\item $S$ is \emph{regular} if every element is regular.
\item An element $q\in S$ is an \emph{inverse} for $p\in S$ if $p=pqp$ and $q=qpq$.
\item $S$ is an \emph{inverse semigroup} if every element has a unique inverse.
\end{enumerate}
\end{defin}

The canonical example of an inverse semigroup is the \emph{symmetric inverse semigroup} of all partial bijections of a set, with composition where it is defined.

A proof of the following general characterization can be found in \cite{howieFundamentals}, Theorem 5.1.1.

\begin{fact}\label{equivalence for inverse semigroups}
The following are equivalent for a semigroup $S$:
\begin{enumerate}
\item $S$ is an inverse semigroup.
\item $S$ is regular and the idempotents commute.
\end{enumerate}
\end{fact}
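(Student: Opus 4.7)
The plan is to handle the two implications separately; the reverse direction $(2) \Rightarrow (1)$ is the cleaner one, so I would probably present it first.

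For $(2) \Rightarrow (1)$, assume $S$ is regular and its idempotents commute. The task is to establish uniqueness of inverses. Suppose $p \in S$ and that $q, q'$ are both inverses of $p$, i.e., $pqp = p$, $qpq = q$ and similarly for $q'$. The key observation is that $qp, q'p, pq, pq'$ are all idempotents (for instance $(qp)(qp) = q(pqp) = qp$). Using the commutation of $qp$ and $q'p$ and substituting $p = pq'p$ into $q = qpq$, I compute
\begin{equation*}
q = qpq = q(pq'p)q = (qp)(q'p)q = (q'p)(qp)q = q'p \cdot (qpq) = q'pq.
\end{equation*}
Multiplying this identity on the right by $p$ and using $pqp = p$ gives $qp = q'p$. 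The symmetric computation with $pq$ and $pq'$ yields $pq = pq'$. Combining, $q = qpq = (qp)q = (q'p)q = q'(pq) = q'(pq') = q'pq' = q'$, proving uniqueness.

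For $(1) \Rightarrow (2)$, regularity is immediate from the definition, so the content is to show idempotents commute. I would first note that for an idempotent $e$ we have $e \cdot e \cdot e = e$, so $e$ is an inverse of itself, and by uniqueness $e^{-1} = e$. Given two idempotents $e, f$, the approach is to identify the unique inverse of $ef$ explicitly. The standard trick is to verify that if $x = (ef)^{-1}$, then $fxe$ is also an inverse of $ef$; by uniqueness $x = fxe$, and from this one deduces that $x$ itself is idempotent (using $e^2 = e$ and $f^2 = f$). So the product of two idempotents is idempotent. Applying this, both $ef$ and $fe$ are idempotents, and a further short computation—showing for instance that $fe$ is an inverse of $ef$ and then invoking uniqueness together with self-inverseness of idempotents—yields $ef = fe$.

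The main obstacle I expect is the forward direction: the verification that $fxe$ is an inverse of $ef$ and the subsequent extraction of $ef = fe$ requires juggling the two defining identities for an inverse without any simplifying structure beyond them, and choosing the right substitutions is the only nontrivial step. The reverse direction, by contrast, is essentially a guided computation once one writes down which products are idempotent. I would therefore structure the final write-up with $(2) \Rightarrow (1)$ presented in full and $(1) \Rightarrow (2)$ built on the self-inverse property of idempotents and the identification of the inverse of $ef$.
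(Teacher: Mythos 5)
The paper does not prove this fact itself; it cites Howie, \emph{Fundamentals of Semigroup Theory}, Theorem~5.1.1, and your argument is essentially the standard proof given there, so there is no divergence of approach to speak of. Your computations all check out: in $(2)\Rightarrow(1)$ the chain $q = qpq = (qp)(q'p)q = (q'p)(qp)q = q'pq$ and its mirror image do yield $qp = q'p$ and $pq = pq'$, hence $q = q'$; and in $(1)\Rightarrow(2)$ the steps you name (idempotents are self-inverse, $fxe$ is an inverse of $ef$ so $x = fxe$, hence $x$ is idempotent, hence $ef = x$ is idempotent, and finally $fe$ is an inverse of the self-inverse idempotent $ef$) all go through.

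The one genuine omission is in $(2)\Rightarrow(1)$: you declare that ``the task is to establish uniqueness of inverses,'' but the definition of an inverse semigroup used in the paper requires every element to \emph{have} an inverse, and regularity only supplies some $q$ with $pqp = p$, not $qpq = q$. You need the standard one-line repair: replace $q$ by $q' = qpq$ and check that $pq'p = p$ and $q'pq' = q'$, so that $q'$ is a genuine inverse of $p$. With that sentence added, the proof is complete.
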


When a compact semitopological structure is available, and the semigroup contains a dense subgroup, much more is true. We formulate these additional properties in the case that we are interested in.

\begin{fact}\label{fact:Sp=Sqp etc}
Let $G\to S$ be a semitopological $*$-semigroup compactification.
\begin{enumerate} 
\item\label{Sp=Sqp} For every $p,q\in S$ we have $Sq=Spq$ if and only if $q=p^*pq$.
\item\label{idemp self-adj} Every idempotent is self-adjoint.
\item Let $e,f\in S$ be idempotents. The following are equivalent:
\begin{enumerate}
\item $e$ and $f$ commute.
\item $ef$ is also an idempotent.
\end{enumerate}
\item Let $p\in S$. The following are equivalent:
\begin{enumerate}
\item $p$ is regular.
\item $pp^*p=p$.
\item $p$ has a unique inverse.
\end{enumerate}
\item In particular, the following are equivalent:
\begin{enumerate}
\item $S$ in an inverse semigroup.
\item $S$ is regular.
\end{enumerate}
\end{enumerate}
\end{fact}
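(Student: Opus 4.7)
The five parts form a cascade, with (1) supplying the main technical content; parts (2)--(5) will then follow by short $*$-algebraic manipulations. For (1), the direction $q = p^*pq \Rightarrow Sq = Spq$ is immediate: the hypothesis yields $Sq = S(p^*p)q \subseteq Spq$, while the reverse inclusion $Spq \subseteq Sq$ is automatic. The substantive content is the converse. The assumption $Sq = Spq$ is equivalent to $q \in Spq$, i.e.\ to the existence of \emph{some} $s \in S$ with $q = spq$, and the task is to upgrade this arbitrary witness to the canonical choice $s = p^*$. I would approximate $p = \lim g_\alpha$ with $g_\alpha \in \alpha(G)$, so that $p^* = \lim g_\alpha^{-1}$, and exploit the pointwise identity $g_\alpha^{-1} g_\alpha q = q$ together with the existence of $s$ and separate continuity of multiplication in $S$. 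The hard part will be the failure of joint continuity; my plan is to work at the level of functions in $\mcA(\alpha)$, which are weakly almost periodic by Fact~\ref{factors of WAP}, and apply Grothendieck's double-limit criterion together with the $*$-closure of $\mcA(\alpha)$ (Fact~\ref{remark *-closed}) to control the relevant iterated limits. I expect this to be the main obstacle.

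Granting (1), part (2) follows by applying it with $p = q = e$: since $e^2 = e$, we have $See = Se$, so (1) yields $e = e^*e$, and applying $*$ gives $e^* = e$. For (3), if $ef = fe$ then $(ef)^2 = e^2 f^2 = ef$; conversely, if $ef$ is idempotent, then by (2) it equals $(ef)^* = f^*e^* = fe$, using (2) again on $e$ and $f$.

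For (4), the implications (b)$\Rightarrow$(a) (take $q = p^*$) and (c)$\Rightarrow$(a) are trivial. For (a)$\Rightarrow$(b): from $p = pqp$ one checks that $pq$ and $qp$ are idempotents, hence self-adjoint by (2); in particular $p^*q^* = qp$, so $p^* = p^*q^*p^* = (qp)p^* \in Spp^*$, giving $Sp^* = Spp^*$, and (1) applied to the pair $(p, p^*)$ yields $p^* = p^*pp^*$, i.e.\ $pp^*p = p$. For (b)$\Rightarrow$(c): $p^*$ is an inverse by $*$-ing (b); for uniqueness, if $q$ is another inverse then $(qp)(p^*p) = q(pp^*p) = qp$ is idempotent, so by (3) the self-adjoint idempotents $qp$ and $p^*p$ commute, whence $qp = p^*p$; symmetrically $pq = pp^*$, and then $q = qpq = q(pq) = q(pp^*) = (qp)p^* = (p^*p)p^* = p^*pp^* = p^*$. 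Finally, for (5), ``$\Rightarrow$'' is trivial; for ``$\Leftarrow$'', by Fact~\ref{equivalence for inverse semigroups} it suffices to show that idempotents commute. Given idempotents $e, f$, (4) applied to $ef$ gives $(ef)(ef)^*(ef) = ef$, and $(ef)^* = fe$ by (2); direct simplification yields $(ef)(fe)(ef) = (ef)^2$, so $ef$ is idempotent, and (3) gives $ef = fe$.
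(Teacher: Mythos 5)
Your derivations of parts (2)--(5) from part (1) are correct and essentially the paper's: (2) via $p=q=e$, (3) via self-adjointness, and the uniqueness argument in (4) all match in substance. Your route for (4)(a)$\Rightarrow$(b) is a slight variant --- you pass through self-adjointness of $qp$ to get $Sp^*=Spp^*$ and apply (1) to the pair $(p,p^*)$, whereas the paper applies (1) directly to $Sqp=Spqp$ (obtained from $Spqp\subset Sqp\subset Sp=Spqp$) to conclude $qp=p^*p$ at once; both are valid, the paper's being marginally shorter. For (5) you invoke Fact~\ref{equivalence for inverse-semigroups} and re-derive commutativity of idempotents, which works, but is roundabout: once (4) gives that every regular element has a \emph{unique} inverse, ``regular $\Rightarrow$ inverse semigroup'' is immediate from the definition, which is what the paper's ``Clear'' means.

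The genuine problem is part (1). You correctly isolate the easy direction and correctly reformulate the hard one as ``$q\in Spq$ (i.e.\ $q=spq$ for some $s$) implies $q=p^*pq$,'' but then you only describe a plan --- approximate $p$ by $g_\alpha\in\alpha(G)$, use $g_\alpha^{-1}g_\alpha q=q$, and hope to control the iterated limits via Grothendieck's criterion --- and you explicitly concede you have not overcome the obstacle. The obstacle is real: $p^*pq$ is the iterated limit $\lim_\alpha\lim_\beta g_\alpha^{-1}g_\beta q$, while the identity $g_\alpha^{-1}g_\alpha q=q$ only controls the diagonal, and separate continuity gives you no handle on the off-diagonal terms; the hypothesis $q=spq$ has to enter in an essential way, and your sketch does not say how. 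So as written, part (1) --- on which everything else in the statement rests --- is not proved. For what it is worth, the paper does not prove it internally either: it cites Lawson's Proposition 4.1 (\cite{lawsonPoints}), which is a nontrivial result about compact semitopological semigroups with dense group of units. Either supply that citation or carry out a genuine argument; the current text does neither.
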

\begin{proof}\ 
\begin{enumerate}
\item See Lawson \cite{lawsonPoints}, Proposition 4.1.
\item This follows easily from (\ref{Sp=Sqp}).
\item One implication is immediate and the other is clear using that idempotents are self-adjoint.
\item Suppose $p$ is regular, so $p=pqp$ for some $q$. Hence $Spqp\subset Sqp\subset Sp=Spqp$, so $Sqp=Spqp$. Then, by (\ref{Sp=Sqp}), we have $qp=p^*pqp=p^*p$, and thus $p=pqp=pp^*p$.

\noindent If now we suppose that $p=pp^*p$, then $p^*=p^*pp^*$, so $p$ and $p^*$ are inverses. If $q$ is another inverse of $p$, then as before, we have $qp=p^*p$. Also, $q^*$ is an inverse for $p^*$, so $q^*p^*=pp^*$ and $pq=pp^*$. Then $q=qpq=qpp^*=p^*pp^*=p^*$.
\item Clear. \qedhere
\end{enumerate}
\end{proof}

\subsection{The WAP compactification of pro-oligomorphic groups}\label{The WAP compactification...}

In this subsection we will recall the model-theoretic description of the WAP compactification given in \cite{bentsa} for Roelcke precompact Polish groups. Since the results of the present paper are concerned with pro-oligomorphic groups, our presentation here will be restricted to these, i.e., to automorphism groups of \emph{classical} $\aleph_0$-categorical structures (as opposed to \emph{metric}). Still, it will be convenient to consider formulas as real-valued functions, taking values in $\{0,1\}$.

We refer to \cite{tenzie} for the necessary background in model theory and for the basics of $\aleph_0$-categorical structures. Let us recall the definition of the family of groups we will study.

\begin{defin} A group $G$ is \emph{oligomorphic} if it can be presented as a closed permutation group $G\leq S(X)$ of a countable set $X$ such that the orbit spaces $X^n/G$ are finite for every $n<\omega$; or, equivalently, if $G$ is the automorphism group of an $\aleph_0$-categorical, one-sorted structure.

A Polish group $G$ obtained as an inverse limit of oligomorphic groups will be called \emph{pro-oligomorphic}. Equivalently, $G$ is pro-oligomorphic if it can be presented as the automorphism group of an $\aleph_0$-categorical, multi-sorted structure. These are exactly the Roelcke precompact, non-archimedean, Polish groups; see \cite{tsaUnitary}, Theorem 2.4.
\end{defin}

Throughout this paper, whenever $G$ is a pro-oligomorphic group and we write $G=\Aut(M)$, we understand that $M$ is an $\aleph_0$-categorical structure and $G$ is its automorphism group. By the homogeneity of $\aleph_0$-categorical structures, we have the following.

\begin{fact}\label{E(M)} Let $G = \Aut(M)$ be a pro-oligomorphic group and $\widehat{G}_L$ be the completion of $G$ with respect to its left uniformity, which is a topological semigroup. Then $\widehat{G}_L$ can be identified with the topological semigroup $E(M)$ of elementary embeddings of $M$ into itself with the topology of pointwise convergence.\end{fact}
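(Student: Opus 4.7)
The plan is to identify the left uniformity on $G$ with the uniformity of pointwise convergence on functions $M \to M$, after which the completion $\widehat{G}_L$ can be described concretely as a set of functions. A neighbourhood basis of the identity in $G = \Aut(M)$ is given by pointwise stabilisers $G_{\bar a}$ of finite tuples $\bar a$ from $M$ (across all sorts), and the corresponding basic entourages for the left uniformity are $\{(g,h) : g^{-1}h \in G_{\bar a}\} = \{(g,h) : g\bar a = h\bar a\}$. Hence a net $(g_i) \subset G$ is left-Cauchy if and only if, for every finite tuple $\bar a$, the net $(g_i \bar a)$ is eventually constant; equivalently, $(g_i)$ converges pointwise to some function $f \colon M \to M$.

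Next I would verify that such pointwise limits are exactly the elementary embeddings. If $f$ is the pointwise limit of a left-Cauchy net $(g_i) \subset G$, then for any formula $\varphi$ and tuple $\bar a$ one has $M \models \varphi(\bar a) \iff M \models \varphi(g_i \bar a)$, and choosing $i$ large enough that $g_i \bar a = f \bar a$ yields $M \models \varphi(\bar a) \iff M \models \varphi(f \bar a)$; thus $f \in E(M)$. Conversely, given $f \in E(M)$, enumerate $M = \{a_0, a_1, \ldots\}$ (picking up all elements of all sorts). Since $f$ is elementary, the tuples $(a_0,\ldots,a_n)$ and $(fa_0,\ldots,fa_n)$ share the same complete type in $M$, so by $\aleph_0$-categoricity and the resulting $\omega$-homogeneity of $M$, there exists $g_n \in G$ with $g_n(a_k) = f(a_k)$ for $k \le n$. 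Then $g_n \to f$ pointwise, so $f$ lies in $\widehat{G}_L$.

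Finally, I would check that the resulting bijection $\widehat{G}_L \to E(M)$ is a homeomorphism of topological semigroups. Pointwise convergence is exactly the topology induced on $E(M)$ by the left uniformity of $G$, so the map is a homeomorphism. For joint continuity of composition on $E(M)$: if $g_i \to g$ pointwise in $E(M)$ then for each $a \in M$, since $M$ is discrete, $g_i(a) = g(a)$ eventually; hence if also $f_i \to f$ pointwise, $f_i(g_i(a)) = f_i(g(a)) \to f(g(a))$, giving joint continuity. This matches the extension of multiplication from $G$ to $\widehat{G}_L$.

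The main obstacle is the surjectivity step, which requires approximating an arbitrary elementary embedding by automorphisms. This is precisely where $\aleph_0$-categoricity is needed, through the $\omega$-homogeneity of $M$ furnished by the Ryll--Nardzewski theorem; the rest of the argument is essentially a matter of unpacking the definitions of the left uniformity and of pointwise convergence.
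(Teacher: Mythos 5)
Your proof is correct and follows essentially the same route as the paper: identify the left uniformity with the uniformity of pointwise convergence on $M$ (the paper packages this as an explicit left-invariant metric $d_L(x,y)=\sup_i 2^{-i}d(x(\xi_i),y(\xi_i))$ rather than via entourages), observe that left-Cauchy limits are exactly elementary embeddings, and use the $\omega$-homogeneity coming from $\aleph_0$-categoricity for density of $G$ in $E(M)$, together with completeness of $E(M)$. Your explicit verification of joint continuity of composition is a detail the paper's proof leaves implicit, but it is consistent with it.
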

\begin{proof}
Let $\xi\in M^\omega$ be an enumeration of $M$ and define the distance $d_L$ on $E(M)$ by $d_L(x,y)=\sup_{i<\omega}2^{-i}d(x(\xi_i),y(\xi_i))$, where $d$ is the discrete, $\{0, 1\}$-valued distance on $M$. It induces the topology of pointwise convergence on $E(M)$. The restriction of $d_L$ to $G$ is a compatible, left-invariant metric, which induces the left uniformity of $G$. By homogeneity, $G$ is dense in $E(M)$ with respect to $d_L$. Since, moreover, $E(M)$ is complete with respect to $d_L$, it is the left completion of $G$.
\end{proof}

Recall that if $(X, d)$ is a metric space and $G$ acts on $X$ by isometries, then
\begin{equation*}
  X \sslash G = \{\cl{Gx} : x \in X \}
\end{equation*}
is a metric space with induced distance
\begin{equation} \label{eq:distance-quotient}
  d(\cl{Gx}, \cl{Gy}) = \inf \{d(x, gy) : g \in G \}.
\end{equation}
When $X$ is complete, so is $X \sslash G$.

One important instance of this construction is
\begin{gather*}
  R(G)=(\widehat{G}_L\times\widehat{G}_L)\sslash G,
\end{gather*}
where $G$ acts diagonally on $\widehat{G}_L\times\widehat{G}_L$ by left translation.
Given elements $x,y\in \widehat{G}_L$, we denote the class of $(x,y)$ in $R(G)$ by $[x,y]_R$.
The group $G$ embeds densely in $R(G)$ via the map $g \mapsto [1,g]_R = [g^{-1},1]_R$: if $g_n\to x$ and $h_n\to y$ in $\widehat{G}_L$, we will have $[1,g_n^{-1}h_n]_R\to [x,y]_R$ in $R(G)$. This makes $R(G)$ a completion of $G$ with respect to the distance
\begin{gather*}
  d(g,f) = \inf \, \bigl\{ d_L(1,h) + d_L(g,hf) : h \in G \bigr\}
\end{gather*}
coming from \eqref{eq:distance-quotient}. Two group elements $g$ and $f$ are close in $R(G)$ if and only if there is $h \in G$ such that $d_L(1,h) + d_L(g,hf)$ is small.
Letting $h' = g^{-1}hf$, we see that $g$ and $f$ are close in $R(G)$ if and only if there exist $h,h'$ close to $1$ such that $f = h^{-1}gh'$.
In other words, the distance on $R(G)$ induces on $G$ the \emph{Roelcke uniformity}, namely the infimum of the left and right uniformities, and the completion $R(G)$ is the \emph{Roelcke completion} of $G$.
The group $G$ is Roelcke precompact precisely when $R(G)$ is compact.
That is, when the completion $R(G)$ coincides with the compactification of $G$ associated to the algebra $\UC(G)$.
Since the completion $R(G)$ is metrizable by construction, for Roelcke precompact Polish groups, this is a metrizable compactification, and so are all its factors.

For the rest of this section, we fix a pro-oligomorphic group $G=\Aut(M)$.
By Fact~\ref{E(M)}, in this case we can write $R(G)=(E(M)\times E(M))\sslash G$, allowing us to identify formulas with Roelcke uniformly continuous functions.
Indeed, let $\varphi(u,v)$ be a formula and $a,b \in M$ tuples of the appropriate length.
The function $(x,y) \mapsto \varphi\bigl( x(a), y(b) \bigr)$ is continuous on $E(M)^2$ and $G$-invariant, so it factors via $R(G)$:
\begin{gather*}
  \varphi_{a,b} \bigl( [x,y]_R \bigr) = \varphi\bigl( x(a), y(b) \bigr).
\end{gather*}
Its restriction to $G \subset R(G)$, namely $g \mapsto \varphi(a,gb)$, is therefore in $\UC(G)$.
Conversely, by \cite{bentsa}, Theorem~5.4, such functions generate a dense subalgebra of $\UC(G)$.
Therefore, the functions $\varphi_{a,b}$ separate points of $R(G)$ or, in other words, $[x,y]_R\in R(G)$ is determined by the values $\varphi(x(a),y(b))$, where $\varphi(u,v)$ varies over the formulas of $M$ and $a,b$ vary over tuples of $M$ of the appropriate length. Coding $x\in E(M)$ by a tuple $\tilde{x} = x(\xi) \in M^\omega$, where $\xi\in M^\omega$ is a fixed enumeration of $M$, we see that $[x,y]_R$ can be identified with the type $\tp(\tilde{x},\tilde{y})$.

By Gelfand duality, factors of $R(G)$ correspond to closed subalgebras of $\UC(G)$: of these, we will mostly concentrate on $\UC(G) \supset \WAP(G) \supset \Hilb(G)$.
Interestingly, the correspondence between $\UC(G)$ and formulas gives rise to correspondences between these subalgebras and special classes of formulas that have been independently studied in model theory.

For the subalgebra $\WAP(G)$, this correspondence was treated in \cite{bentsa}.
Its Gelfand space is the WAP compactification $W(G)$, which is therefore a factor of $R(G)$.
We will denote the image of $[x,y]_R$ in $W(G)$ simply by $[x,y]$: it is determined by the values of WAP functions at $[x,y]_R$.
A formula $\varphi(u,v)$ is stable (when restricted to $\tp(a,b)$, which is a definable set by $\aleph_0$-categoricity) if and only if $\varphi_{a,b}(g) = \varphi(a,gb)$ is WAP, and conversely, such functions generate a dense subalgebra of $\WAP(G)$ (\cite{bentsa}, Theorem~5.4).
Therefore, $[x,y]$ is determined by the values $\varphi(x(a),y(b))$, as before, only that $\varphi(u,v)$ ranges over the stable formulas.
In particular, $G$ is a WAP group if and only if the theory of $M$ is stable.

The canonical $G$-map $G\to W(G)$ is given by
$$g\mapsto [1,g],$$
and the $G$-action by
$$g[x,y]=[xg^{-1},y].$$
The involution $^* \colon W(G)\to W(G)$ extending the inverse on the image of $G$ is given by $$[x,y]^*=[y,x].$$
Moreover, the semitopological semigroup law of $W(G)$ can be described in terms of the \emph{stable independence relation} of $M$. In order to explain this, we first recall the definition of \emph{imaginaries} and some notions from stability theory.

Let $M$ be a structure. An \emph{imaginary element} of $M$ is the a class of a definable equivalence relation on some finite power of $M$. In other words, if a formula $\varphi(u,v)$ defines an equivalence relation on $M^n$, then each class $[a]_\varphi\in M^n/\varphi$ is an imaginary of $M$.

A standard model-theoretic construction allows us to consider all the imaginaries of $M$ as actual elements in a larger (multi-sorted) structure, denoted $M^\eq$. See \cite[\textsection 8.4]{tenzie} for the details. This enlargement of $M$ is in many senses innocuous; in particular, the natural restriction map $\Aut(M^\eq)\to\Aut(M)$ is an isomorphism between their automorphism groups. Thus, for many purposes, it is convenient to work directly with the structure $M^\eq$.

Moreover, imaginary elements of $\aleph_0$-categorical structures are in correspondence with the open subgroups of its automorphism group. Indeed, a subgroup $V\leq G$ is open if and only if it is the stabilizer of an imaginary element of $M$. That is to say, if and only if there is a definable equivalence relation $\varphi(u,v)$ and a tuple $c$ such that $$V=\{g\in G: M \models \varphi(c,gc)\}=\{g\in G:[c]_\varphi=g[c]_\varphi\}.$$ See, for example, \cite[\textsection 5]{tsaUnitary}.

A special kind of imaginary is given as follows. If $\varphi(u,v)$ is any formula, we can define a formula $E_\varphi(u,u')$ by $$E_\varphi(u,u'):=\forall v(\varphi(u,v)\leftrightarrow\varphi(u',v)).$$ Then $E_\varphi$ defines an equivalence relation on $M^{|u|}$. An imaginary $[c]_{E_\varphi}\in M^{|u|}/E_\varphi$ should be seen as representing the formula $\varphi(c,v)$; $[c]_{E_\varphi}$ (also denoted simply by $[c]_\varphi$) is called the \emph{canonical parameter} of $\varphi(c,v)$,

Given a type $t\in S_u(M)$ and a formula $\varphi(u,v)$, the \emph{$\varphi$-definition} of $t$ is the function $d_t\varphi \colon M^{|v|}\to\{0,1\}$ given by $$d_t\varphi(b):=\varphi(u,b)^t,$$
where the right term denotes the value of $\varphi(u,b)$ in the type $t$. Then, the formula $\varphi(u,v)$ is \emph{stable} if and only if $d_t\varphi$ is an $M$-definable predicate for every $t\in S_u(M)$. (We only need to consider the model $M$ because of $\aleph_0$-categoricity; for a treatment of stability in the general case, see for instance \cite{tenzie}, Ch.~8.) In this case we can write $d_t\varphi(v)$ in the form $\psi(c,v)$, and then consider the canonical parameter of this formula; we denote this canonical parameter by $\Cb_\varphi(t)$. (The choice of the formula $\psi$ can be done uniformly in $t$, that is, $c$ depends on $t$ but $\psi(w,v)$ does not.) The tuple $$\Cb(t)=(\Cb_\varphi(t))_{\varphi\text{ stable}}$$ is the \emph{canonical base} of $t$.

Finally, an element $d\in M^\eq$ is in the \emph{algebraic closure} of a set $A\subset M^\eq$ if, for some finite tuple $a\subset A$, $d$ has only finitely many conjugates by automorphisms fixing $a$. We denote the algebraic closure of $A$ by $\acl(A)$ (which is always a subset of $M^\eq$). The set $A$ is \emph{algebraically closed} if $A=\acl(A)$.

\begin{fact}
Let $a\in (M^\eq)^{|u|}$ be a tuple and $B\subset M^\eq$ be any subset. There is an extension of the type $\tp(a/\acl(B))$ to a type $t\in S_u(M)$ such that $\Cb(t)\subset\acl(B)$. Moreover, $\Cb(t)$ does not depend on the particular extension; in other words, if $s\in S_u(M)$ is another such extension, then $d_t\varphi=d_s\varphi$ for every stable formula $\varphi(u,v)$.
\end{fact}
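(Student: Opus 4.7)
The plan is to establish existence and uniqueness by applying the local stability calculus formula-by-formula and combining the local nonforking extensions into a single global type via compactness.

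\emph{Existence.} Let $A = \acl(B)$. For each stable formula $\varphi(u,v)$, the complete $\varphi$-type $p_\varphi := \tp_\varphi(a/A)$ is definable, and its $\varphi$-definition may be written $\psi_\varphi(c_\varphi, v)$ where the canonical parameter $c_\varphi$ lies in $\acl(A) = A$; this is the standard fact that stationary $\varphi$-types over algebraically closed sets admit $\varphi$-definitions over those sets. Using this definition we extend $p_\varphi$ to a complete $\varphi$-type $\hat p_\varphi$ over $M$ via $\varphi(u,m) \in \hat p_\varphi$ iff $\models \psi_\varphi(c_\varphi, m)$. I claim the partial type $\tp(a/A) \cup \bigcup_{\varphi \text{ stable}} \hat p_\varphi$ is consistent: by compactness it suffices to verify this for finitely many stable formulas $\varphi_1,\dots,\varphi_n$ and finitely many parameters $\bar m \subset M$, and such a simultaneous realization can be found in a sufficiently saturated elementary extension by stationarity of each $p_{\varphi_i}$ (take a realization of $\tp(a/A)$ that is $\varphi_i$-nonforking with $\bar m$ over $A$ for each $i$). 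Any completion $t \in S_u(M)$ of this partial type then satisfies $d_t\varphi(v) = \psi_\varphi(c_\varphi, v)$ for every stable $\varphi$, whence $\Cb(t) \subset A$.

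\emph{Uniqueness.} Suppose $t, s \in S_u(M)$ are two extensions of $\tp(a/A)$ with canonical bases in $A$. For each stable $\varphi$, having canonical base in $A$ means that $t|_\varphi$ and $s|_\varphi$ are both nonforking extensions over $M$ of the common restriction $p_\varphi = \tp_\varphi(a/A)$. Since nonforking extensions of a complete $\varphi$-type over an algebraically closed set are unique (stationarity again), we obtain $t|_\varphi = s|_\varphi$, and in particular $d_t\varphi = d_s\varphi$ as $M$-definable predicates on $M^{|v|}$.

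\emph{Main obstacle.} The delicate step is the consistency claim in the existence part: we must simultaneously realize the local nonforking extensions of several stable formulas. This reduces, via stationarity of $\varphi$-types over algebraically closed sets, to constructing a realization of $\tp(a/A)$ that is locally nonforking in each of the given formulas over a small parameter set; this is a standard application of the local stability calculus. Once this is granted, both parts of the statement follow routinely from the local theory of each individual stable $\varphi$.
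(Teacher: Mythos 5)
The paper does not actually prove this statement: it is presented as a \emph{Fact}, i.e.\ quoted as standard local stability theory (the neighbouring Fact on the properties of $\ind$ and $\equiv^s$ is referred to \cite[Ch.\ 1, \textsection 2]{pil96}, which is also the reference for this one). So there is no in-paper argument to compare yours against, and the only question is whether your outline is a correct reduction to the standard lemmas. Its architecture is the right one: treat each stable $\varphi$ separately, use local definability and stationarity, and glue by compactness; and you correctly identify the simultaneous realization of finitely many local nonforking extensions as the crux. Note, though, that this crux \emph{is} the existence half of the statement (modulo compactness), so deferring it to ``a standard application of the local stability calculus'' is a citation rather than a proof --- acceptable here, since the paper itself does no more, but it should be flagged as such.

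There is one assertion you lean on that is false as stated: a complete $\varphi$-type over an algebraically closed set $A$ which is not a model is in general neither definable over $A$ nor stationary. For instance, take the ($\aleph_0$-categorical, stable) theory of a surjection $\pi\colon P\to Q$ of an infinite set onto an infinite set with infinite fibres, let $\varphi(u,v)$ be $\pi(u)=\pi(v)$ and let $A=\acl(q)$ for some $q\in Q$. Then $A$ contains no point of $P$, so $\tp_\varphi(a/A)$ is the trivial $\varphi$-type for every $a\in P$; yet both the predicate $\pi(v)=q$ and the identically false predicate occur as $\varphi$-definitions of global complete $\varphi$-types extending it whose canonical parameters lie in $A$. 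What is true, and what you actually need, is the statement for \emph{full} types: all global extensions of $\tp(a/A)$ (with $A=\acl(A)$ in $M^\eq$) whose $\varphi$-canonical parameter lies in $A$ share the same $\varphi$-definition, and at least one such extension exists. In your uniqueness paragraph this matters: the hypothesis that saves the argument is that $t$ and $s$ extend the full type $\tp(a/A)$, not merely $\tp_\varphi(a/A)$ --- in the example above, the two displayed $\varphi$-definitions come from extensions of two \emph{different} full types over $A$. So the conclusion you want does follow, but from the full-type version of stationarity, not from the $\varphi$-type version you invoke; and the same correction applies to the existence paragraph, where $\psi_\varphi(c_\varphi,v)$ must be extracted from $\tp(a/A)$ via its local nonforking extensions rather than read off from $\tp_\varphi(a/A)$.
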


\begin{defin}\ 
\begin{enumerate}[(i)]
\item If $a$, $B$ and $t$ are as in the previous fact, we define $\Cb_\varphi(a/B):=\Cb_\varphi(t)$, $\Cb(a/B):=\Cb(t)$.
\item Given any sets $A,B,C\subset M^\eq$, we say that \emph{$A$ is stably independent from $C$ over $B$}, denoted $$A\ind_BC,$$ if for any tuple $a\in A^{|u|}$ we have $\Cb(a/B)=\Cb(a/BC)$.
\item If $a,c$ are tuples from $M^\eq$ and $B$ is any subset, we write $a\equiv^s_Bc$ to mean that $a$ and $c$ have the same \emph{stable type} over $B$, that is, $\varphi(a,b)=\varphi(c,b)$ for any stable formula $\varphi(u,v)$ and parameter $b\in B^{|v|}$. When $B$ is empty we shall write simply $a\equiv c$, since $a\equiv^s_\emptyset c$ is indeed equivalent to $\tp(a/\emptyset)=\tp(c/\emptyset)$.
\end{enumerate}
\end{defin}

Note that the natural identification of $\Aut(M)$ and $\Aut(M^\eq)$ extends to an identification of $E(M)$ and $E(M^\eq)$.

\begin{convention}
We may consider the elements of $E(M)$ as sets (notably, to apply the relations $\ind$ and $\equiv^s$ to them), and this shall be done in the following way: an element $x\in E(M)$ is interpreted as the set $x(M^\eq)\subset M^\eq$ (which is the same as $\acl(x(M))$). For instance, $x\cap y$ will denote $x(M^\eq)\cap y(M^\eq)$.

If appearing as arguments of the relation $\equiv^s$, the elements of $E(M)$ will be considered as infinite tuples indexed by $M$ (or by $\omega$ via a fixed enumeration $\xi$, as before).

In these contexts, the juxtaposition $xy$ will denote the juxtaposed tuple (or merely the union of sets).
\end{convention}

The pair $(\ind,\equiv^s)$ satisfies the following usual properties.

\begin{fact}\label{properties of ind,equiv} Let $x,y,z,w$ be any tuples from $M^\eq$.
\begin{enumerate}
\item (Invariance) If $x\ind_yz$ and $xyz\equiv x'y'z'$, then $x'\ind_{y'}z'$. If $x\ind_yz$ and $x\equiv^s_{yz}x'$, then $x'\ind_yz$.
\item (Symmetry) If $y\subset x\cap z$, then $x\ind_yz$ if and only if $z\ind_yx$.
\item (Transitivity) $x\ind_yzw$ if and only if $x\ind_{yz}w$ and $x\ind_yz$.
\item (Existence) There exist $x',y',z'$ such that $x'y'\equiv xy$, $y'z'\equiv yz$ and $x'\ind_{y'}z'$.
\item (Stationarity) Suppose $y$ is algebraically closed. If $x\equiv^s_yz$, $x\ind_yw$ and $z\ind_yw$, then $x\equiv^s_{yw}z$.
\item (Non-triviality) If $x\ind_yz$, then $\acl(x)\cap\acl(z)\subset\acl(y)$.
\end{enumerate}
\end{fact}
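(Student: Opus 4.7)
The plan is to deduce each clause from the definition of $\Cb$ in terms of $\varphi$-definitions for stable $\varphi$, together with the standard non-forking calculus in stable theories, which in the $\aleph_0$-categorical setting applies without subtlety (see \cite{tenzie}, Ch.\ 8). All six properties are classical; my proposal is simply to identify the right reductions rather than produce new arguments.

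\emph{Invariance} is a direct consequence of two features of the construction of $\Cb$: canonical parameters are $\Aut(M^\eq)$-equivariant, so $\sigma\bigl(\Cb(x/B)\bigr) = \Cb(\sigma x/\sigma B)$, which handles the first clause; and $\Cb(x/B)$ depends on $x$ and $B$ only through $\equiv^s_B$, which handles the second. \emph{Symmetry} reduces to symmetry of $\varphi$-non-forking over a common sub-base, a single-formula statement standard in stability theory. \emph{Transitivity} comes from the monotonicity chain $\Cb(x/y)\subseteq\acl\bigl(\Cb(x/yz)\bigr)\subseteq\acl\bigl(\Cb(x/yzw)\bigr)$: the hypothesis $x\ind_y zw$ collapses both inclusions, yielding $x\ind_{yz}w$ and $x\ind_y z$ simultaneously, while the converse is immediate from the same chain.

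\emph{Existence} is the extension property: pick any extension $p$ of $\tp(x/y)$ to $\acl(y)$, then realize its unique non-forking extension to $\acl(yz)$, and translate by an automorphism to recover $x',y',z'$ satisfying the required equivalences. \emph{Stationarity} is exactly the uniqueness of non-forking extensions over algebraically closed bases for stable types: both $x$ and $z$ realize non-forking extensions of the same $\equiv^s_y$-type, so they agree on $\equiv^s_{yw}$.

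The main obstacle is \emph{Non-triviality}: given $d\in\acl(x)\cap\acl(z)\setminus\acl(y)$, the plan is to argue that the canonical parameter of the algebraicity formula witnessing $d\in\acl(z)$ lies in a stable canonical base of $d$ over $z$; combined with $d\in\acl(x)$ and the hypothesis $x\ind_y z$, this parameter is forced (via invariance and monotonicity of $\Cb$) into $\acl(y)$, from which $d\in\acl(y)$ follows, a contradiction. Keeping the imaginary-element bookkeeping straight is the only delicate point, and working entirely inside $M^\eq$ ensures that all required canonical parameters exist as genuine elements and that all instances of $\acl$ behave uniformly.
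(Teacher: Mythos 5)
The paper gives no argument for this Fact at all --- it simply cites \cite[Ch.~1, \S 2]{pil96} --- so the relevant comparison is with the standard stability-theoretic proofs. Most of your reductions are indeed the classical ones: invariance, existence, stationarity and symmetry are handled correctly, and your plan for non-triviality (the algebraicity formula is stable, so an algebraic type that does not fork over $y$ has its realizations in $\acl(y)$) is the right one, modulo the unaddressed step that $x\ind_y z$ must first be transferred to tuples from $\acl(x)$ before it can be applied to $d$.

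There is, however, a genuine gap in your transitivity argument: the ``monotonicity chain'' $\Cb(x/y)\subseteq\acl\bigl(\Cb(x/yz)\bigr)\subseteq\acl\bigl(\Cb(x/yzw)\bigr)$ is false. Canonical bases over larger parameter sets need not recover canonical bases over smaller ones. Already in the free pseudoplane (an infinite forest with all vertices of infinite degree), if $d(a,b)=2$ with midpoint $m$, then $b$ is interdefinable with $\Cb(a/b)$ (the unique nonforking extension of $\tp(a/b)$ determines $b$ as the only element at distance $2$ from its realizations), whereas $\Cb(a/bm)$ is interalgebraic with $m$ and $b\notin\acl(m)$; so $\Cb(x/y)\not\subseteq\acl\bigl(\Cb(x/yz)\bigr)$. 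The failure of such ``expected'' containments of canonical bases is precisely the non-one-basedness phenomenon this paper revolves around (cf.\ Example~\ref{ex:Hrushovski}), so one should be especially wary of asserting it. The backward direction of transitivity is immediate from the definition, since $\Cb(x/y)=\Cb(x/yz)$ and $\Cb(x/yz)=\Cb(x/yzw)$ give $\Cb(x/y)=\Cb(x/yzw)$. For the forward direction the correct route is different: if $x\ind_y zw$, let $t\in S_u(M)$ be the extension of $\tp(x/\acl(yzw))$ with $\Cb(t)=\Cb(x/yzw)\subseteq\acl(y)$; since $\acl(y)\subseteq\acl(yz)$, the uniqueness clause of the unnumbered Fact preceding the definition of $\Cb(a/B)$ shows that $t$ also computes $\Cb(x/yz)$, whence $\Cb(x/yz)=\Cb(t)=\Cb(x/y)=\Cb(x/yzw)$, yielding both $x\ind_{yz}w$ and $x\ind_y z$.
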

\begin{proof}
We refer the reader to \cite[Ch.\ 1, \textsection 2]{pil96}.
\end{proof}

\begin{fact}
The semigroup law in $W(G)$ is given by $$[x,y][y,z]=[x,z]\text{ \ if \ }x\ind_yz.$$ The properties of the independence relation stated above ensure that, for any $p,q\in W(G)$, we can always find $x,y,z\in E(M)$ such that $p=[x,y]$, $q=[y,z]$ and $x\ind_yz$.
\end{fact}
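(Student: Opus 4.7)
The plan is to prove three things in sequence: existence of a representation with $x \ind_y z$; well-definedness of $[x, z]$ as a function of $p$ and $q$; and agreement of this formula with the semigroup product of $W(G)$.

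For existence, given $p, q \in W(G)$ write $p = [x_1, y_1]$ and $q = [y_2, z_2]$ using surjectivity of $E(M)^2 \to W(G)$. Since all elements of $E(M)$ share a single $\omega$-type by $\aleph_0$-categoricity, homogeneity of $M^\eq$ supplies $\sigma \in \Aut(M^\eq) \cong G$ with $\sigma y_2 = y_1$; replacing $z_2$ by $z := \sigma z_2$ aligns the middles so that $q = [y_1, z]$. The Existence property of $\ind$ then furnishes $(x', y', z')$ with $x' y' \equiv x_1 y_1$, $y' z' \equiv y_1 z$ and $x' \ind_{y'} z'$, as required.

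For well-definedness, suppose $(x_i, y_i, z_i)$ ($i = 1, 2$) both represent $(p, q)$ with the independence condition. Using $x_1 y_1 \equiv x_2 y_2$ and homogeneity, after applying an automorphism to $(x_2, y_2, z_2)$ and invoking invariance of $\ind$, we may assume $x_1 = x_2 = x$, $y_1 = y_2 = y$, with $x \ind_y z_i$ for both $i$ and $z_1 \equiv_y z_2$. Since $y \in E(M)$ corresponds, by the convention, to the algebraically closed set $y(M^\eq) = \acl(y(M))$, stationarity applies: combined with $z_1 \equiv^s_y z_2$ and $z_i \ind_y x$ (by symmetry, applied after augmenting the tuples to contain $y$), it yields $z_1 \equiv^s_{yx} z_2$, hence $xz_1 \equiv xz_2$ and $[x, z_1] = [x, z_2]$ in $W(G)$.

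Finally, to match the semigroup product, approximate by group elements: choose nets $h_i, k_i \in G$ with $h_i \to x$ and $k_i \to y$ in $E(M)$ and set $g_i := h_i^{-1} k_i$, so that $[1, g_i]_R = [h_i, k_i]_R \to [x, y] = p$ in $W(G)$; build $f_j \to q$ analogously. For a WAP function $F(g) = \varphi(a, g(b))$ with $\varphi$ stable and $a, b$ tuples from $M^\eq$, Grothendieck's double-limit criterion gives $\hat F(pq) = \lim_i \lim_j F(g_i f_j)$. The inner limit, viewed as the value at $q = [y, z]$ of the WAP function $f \mapsto \varphi(g_i^{-1}(a), f(b))$, equals $\varphi(y g_i^{-1}(a), z(b))$; the outer limit as $g_i \to p$ is then controlled by a WAP function in $g_i$, and the assumption $x \ind_y z$ is exactly the condition under which it collapses to $\varphi(x(a), z(b)) = \hat F([x, z])$. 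Since such $F$ separate points of $W(G)$, one concludes $pq = [x, z]$. The main obstacle is in this last step: identifying $g \mapsto \varphi(y g^{-1}(a), z(b))$ as a WAP function whose extension to $W(G)$ takes the value $\varphi(x(a), z(b))$ at $p$ is where independence enters essentially, and conceptually reflects that under $x \ind_y z$, $\tp^s(xz/\emptyset)$ is uniquely recoverable from $\tp^s(xy/\emptyset)$ and $\tp^s(yz/\emptyset)$ by stationarity.
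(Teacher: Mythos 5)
The paper states this Fact without proof (it is recalled from \cite{bentsa}, \S5), so there is no internal argument to compare against; judged on its own terms, your proposal has two genuine gaps. The first is the alignment step, used in both the existence and the well-definedness parts: you invoke an automorphism $\sigma$ of $M^\eq$ with $\sigma y_2=y_1$ on the grounds that $\tilde y_1\equiv\tilde y_2$. For infinite tuples, equality of types does not yield conjugacy under $\Aut(M)$: take $M=(\mbQ,<)$, $y_1$ the identity and $y_2$ an embedding onto $(0,1)\cap\mbQ$; no automorphism carries $y_2(M)$ onto $y_1(M)$. So the step as written fails, and one cannot in general align the middle coordinates while holding the representative $(x_1,y_1)$ of $p$ fixed. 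The correct device is to replace \emph{all} representatives simultaneously: amalgamate $\tp(\tilde x_1\tilde y_1)$ and $\tp(\tilde y_2\tilde z_2)$ over the common type of an enumeration of a model in a monster model, choosing the third coordinate independent over the second, and pull the resulting countable configuration back into $M$ by $\aleph_0$-categoricity. This is exactly the ``replacing by an equivalent pair if necessary'' move the paper makes in the proof of Lemma~\ref{idempotents and regular elements}. A second slip in the well-definedness part: $[x_1,y_1]=[x_2,y_2]$ in $W(G)$ only gives $\tilde x_1\tilde y_1\equiv^s\tilde x_2\tilde y_2$ (equal \emph{stable} types), not $\equiv$, so even a correct amalgamation cannot reduce to $x_1=x_2$, $y_1=y_2$. (That whole section is in any case redundant: once the formula is shown to compute the already well-defined product $pq$, well-definedness of $[x,z]$ is automatic.)

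The second gap sits at what you yourself flag as ``the main obstacle''. Saying the outer limit is ``controlled by a WAP function in $g_i$'' and that independence ``is exactly the condition under which it collapses to $\varphi(x(a),z(b))$'' restates the claim rather than proving it. Writing $g_i=h_i^{-1}k_i$ with $h_i\to x$, $k_i\to y$, the tuple $y(g_i^{-1}(a))=yk_i^{-1}(x(a))$ (eventually) does not converge in $M$; what converges is its $\varphi$-type over $y(M)$, and the content of the Fact is the identification of this limit type as the nonforking extension of $\tp(x(a)/x\cap y)$, whose $\varphi$-definition is over $y$, is itself a stable formula, and evaluates at $z(b)$ to $\varphi(x(a),z(b))$ precisely because $x\ind_y z$. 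That requires the definability-of-types machinery of local stability and is not supplied; your closing appeal to stationarity addresses well-definedness of the right-hand side, not this identification. Both gaps are fillable (they are filled in \cite{bentsa}), but as written the proposal asserts the two key points rather than establishing them.
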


The latter allows for a model-theoretic description of the idempotents of $W(G)$. This was given in \cite[\textsection 5]{bentsa}. Let us end this section by recalling this description and giving a complete proof. Moreover, we complement it with a characterization of the regular elements of $W(G)$, which will be used in our main result.

For the definition and properties of the $\varphi$-rank see \cite[Ch.\ 1, \textsection 3]{pil96}.

\begin{lem}\label{idempotents and regular elements} Let $p=[x,y]\in W(G)$, $C=x\cap y$.
\begin{enumerate}
\item The following are equivalent:
\begin{enumerate}
\item $p$ is an idempotent (i.e., $pp=p$).
\item $x\equiv^s_Cy$ and $x\ind_Cy$.
\end{enumerate}
\item The following are equivalent:
\begin{enumerate}
\item $p$ is regular (i.e., $pp^*p=p$).
\item $x\ind_Cy$.
\end{enumerate}
\end{enumerate}
\end{lem}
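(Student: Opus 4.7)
The common strategy is to translate the algebraic identities ($pp = p$ in (1), $pp^*p = p$ in (2)) into equalities of stable types using the semigroup law $[x,y][y,z] = [x,z]$ (when $x \ind_y z$), together with the independence calculus of Fact~\ref{properties of ind,equiv}. The key observation is that $C = x \cap y$ is algebraically closed as the intersection of two algebraically closed sets, so stationarity applies over it.

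For the ``easy'' directions $(b) \Rightarrow (a)$, the recipe is to construct a witness by existence over $C$ and verify the required type equalities by stationarity. For $(1)(b) \Rightarrow (a)$, assume $x \equiv^s_C y$ and $x \ind_C y$, and produce $z$ with $z \equiv_C y$ and $z \ind_C xy$. The pairs $(x,y)$ and $(y,z)$ are then both ``free joins'' over $C$ of the common type $t = \tp^s(y/C) = \tp^s(x/C) = \tp^s(z/C)$, so stationarity yields $\tp^s(x,y) = \tp^s(y,z)$, i.e., $[y,z] = p$; transitivity gives $x \ind_y z$, hence $pp = [x,z]$; and a parallel argument for the pairs $(x,y)$ and $(x,z)$ gives $pp = p$. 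For $(2)(b) \Rightarrow (a)$ the same pattern works with $x'$ in place of $z$: produce $x'$ with $x' \equiv_C x$ and $x' \ind_C xy$; stationarity gives $[y, x'] = p^*$ and $[x', y] = p$, while transitivity, together with $C \subset x'$ (which follows from $\aleph_0$-categorical homogeneity applied to $x' \equiv_C x$ and $C \subset x$), gives both $x \ind_y x'$ and $x \ind_{x'} y$. Chaining these, $pp^*p = [x,y][y,x'][x',y] = [x,x'][x',y] = [x,y] = p$.

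For the converse directions, observe first that $(1)(a) \Rightarrow (2)(a)$ is automatic: since idempotents are self-adjoint by Fact~\ref{fact:Sp=Sqp etc}(\ref{idemp self-adj}), $pp^*p = ppp = pp = p$. So it suffices to establish $(2)(a) \Rightarrow (b)$ and, under the extra assumption $pp = p$, also to extract $x \equiv^s_C y$. The latter is relatively direct: the witness $z$ from the computation of $pp$ satisfies $\tp^s(x,z) = \tp^s(x,y)$, which gives $\tp^s(z/x) = \tp^s(y/x)$ and hence $z \equiv^s_C y$; combining with $p = p^*$, the equality $\tp^s(y,z) = \tp^s(x,y)$ becomes $\tp^s(y,z) = \tp^s(y,x)$, whence $\tp^s(z/y) = \tp^s(x/y)$ and $z \equiv^s_C x$; chaining, $x \equiv^s_C z \equiv^s_C y$.

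The main obstacle is $(2)(a) \Rightarrow (b)$: showing that $pp^*p = p$ forces the canonical base $\Cb(\tilde y / \tilde x)$ to lie in $C$. Unpacking $pp^*p$ through the semigroup law produces witnesses $z$ (with $\tp^s(y,z) = \tp^s(y,x)$, $x \ind_y z$) and $w$ (with $\tp^s(z,w) = \tp^s(x,y)$, $x \ind_z w$) such that $\tp^s(x,w) = \tp^s(x,y)$. The task is to combine this chain of type equalities and independences with the uniqueness of non-forking extensions over algebraically closed sets to show that each $\Cb_\varphi(\tilde y / \tilde x)$, which a priori lies only in $\acl(\tilde x) = x$, must also lie in $y$ and hence in $x \cap y = C$.
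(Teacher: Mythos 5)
Your treatment of the two implications $(b)\Rightarrow(a)$ is essentially sound: constructing the witness ($z$, resp.\ $x'$) independent over $C$ and invoking stationarity twice over the algebraically closed set $C$ does recover the required type equalities, and is only a mild reorganization of the paper's argument (which instead fixes the witness with a prescribed full type over $\emptyset$ at the outset and then verifies the same identities). Your extraction of $x\equiv^s_C y$ from idempotency via self-adjointness of $p$ --- chaining $z\equiv^s_x y$ (from $pp=p$) with $z\equiv^s_y x$ (from $p^*=p$) --- is also correct and arguably slicker than the paper's element-by-element analysis of $C$.

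However, there is a genuine gap: you never prove $(2)(a)\Rightarrow(b)$, you only name it as ``the task''. This is the heart of the lemma, and your architecture makes the omission worse than it looks. You reduce $(1)(a)\Rightarrow(b)$ to $(2)(a)\Rightarrow(b)$ via $pp^*p=ppp=p$; but the paper's proof of $(2)(a)\Rightarrow(b)$ itself \emph{uses} $(1)(a)\Rightarrow(b)$, applied to the idempotent $e=pp^*=[x,z]$ to conclude $x\ind_C z$. So you cannot fill your missing step with the paper's argument without circularity: you would need a proof of $(2)(a)\Rightarrow(b)$ that does not pass through part (1). The ingredient you are missing is the one the paper uses to prove $(1)(a)\Rightarrow(b)$ directly, namely a local rank computation: with $z$ chosen so that $xy\equiv yz$, $x\ind_y z$ and $[x,z]=pp=p=[x,y]$, one shows $R_\varphi(x/yz)=R_\varphi(x/y)=R_\varphi(x/z)$ for every stable $\varphi$, whence $x\ind_z y$ as well as $x\ind_y z$, and therefore $\Cb(x/yz)\subset y\cap z=C$, giving $x\ind_C y$. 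Nothing in your proposal plays the role of this double-independence argument, and without it (or a substitute) neither $(1)(a)\Rightarrow(b)$ nor $(2)(a)\Rightarrow(b)$ is established.
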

\begin{proof}
(1) By replacing $x,y$ by an equivalent pair if necessary, we can find $z\in E(M)$ such that $x\ind_yz$ and $xy\equiv yz$. Indeed, using the saturation of $\aleph_0$-categorical structures to replace tuples by appropriate equivalent tuples if necessary, we may assume first that there is $x'$ with $xy\equiv yx'$. Then, by \emph{existence}, we may assume there is $z$ with $yx'\equiv yz$ and $x\ind_y z$. Thus $z$ satisfies the conditions above. In particular, $pp=[x,y][y,z]=[x,z]$.

Suppose $p$ is an idempotent. Then $[x,y]=[x,z]$, i.e., $y\equiv^s_xz$. From this we see that $C=x\cap z\subset y\cap z$. Actually, we can deduce that $x\equiv^s_Cy$ and $C=y\cap z$. Indeed, if $x(a)=y(b)\in C$, then from $y\equiv^s_xz$ we get $x(a)=z(b)$ and thus $y(b)=z(b)$. Since $xy\equiv yz$, then $x(b)=y(b)$. Hence $a=b$. If we denote $D=x^{-1}(C)$, we see that the restrictions $x|_D=y|_D$ coincide. Thus, with the appropriate orderings, $xC\equiv MD\equiv yC$, so in particular $x\equiv^s_Cy$. Furthermore, since $x\cap y=x(D)$ and $xy\equiv yz$, we have $y\cap z=y(D)=C$.

Next we argue that $x\ind_zy$. This is equivalent to show that, for every stable formula $\varphi$, the $\varphi$-rank of $x$ over $yz$ equals the $\varphi$-rank of $x$ over $z$: $R_\varphi(x/yz)=R_\varphi(x/z)$. But indeed, since $x\ind_yz$ and $[x,y]=[x,z]$, we have $$R_\varphi(x/yz)=R_\varphi(x/y)=R_\varphi(x/z).$$ To see why the second identity holds, take $x'$ such that $xz\equiv x'y$. Then $[x,y]=[x',y]$, so $x$ and $x'$ have the same $\varphi$-type over $y$, and thus $R_\varphi(x/y)=R_\varphi(x'/y)=R_\varphi(x/z)$.

Now we have $x\ind_yz$ and $x\ind_zy$, and thus $\Cb(x/yz)\subset y\cap z=C$. This implies that $x\ind_Cy$.

Conversely, suppose $x\equiv^s_Cy$ and $x\ind_Cy$. Together with the conditions $xy\equiv yz$, $x\ind_yz$, this implies that $C=y\cap z$, $y\equiv^s_Cz$ and, by \emph{transitivity}, $x\ind_Cz$. Then, by \emph{symmetry} and \emph{stationarity}, we have $y\equiv^s_xz$, i.e., $[x,y]=[x,z]$, which means that $p$ is an idempotent.

(2) Replacing $x,y$ by an equivalent pair if necessary, we can find $z,w\in E(M)$ with $x\ind_yz$, $xy\equiv zy$, $x\ind_zw$ and $xy\equiv zw$. In particular, $$pp^*p=[x,y][y,z][z,w]=[x,w].$$ In addition, from $xy\equiv zy$ we get $C\subset z$, and from $x\ind_yz$, by \emph{non-triviality}, we get $x\cap z\subset y$. Hence, $C=x\cap z$.

Suppose $p$ is regular. The condition $p=pp^*p$ becomes $[x,y]=[x,w],$ so $y\equiv^s_xw$. Moreover, since $e=pp^*=[x,z]$ is an idempotent, we have that $x\ind_Cz$. From this and $x\ind_zw$ we obtain $x\ind_Cw$. Since $y\equiv^s_xw$, we deduce that $x\ind_Cy$.

Suppose conversely that $x\ind_Cy$. From our hypothesis we get $x\ind_Cz$, and then $x\ind_Cw$. The condition $xy\equiv zy$ implies $x\equiv^s_Cz$, and this together with $xy\equiv zw$ implies $y\equiv^s_Cw$. Hence, by stationarity, $y\equiv^s_xw$. That is, $pp^*p=[x,w]=[x,y]=p$.
\end{proof}

\noindent\hrulefill

\section{The Fourier--Stieltjes algebra of pro-oligomorphic groups}\label{B(G) of olig groups}

\subsection{Examples of functions in $\Hilb(G)\setminus B(G)$} As mentioned before, the Fourier--Stieltjes algebra $B(G)$ is, as a general rule, strictly contained in its closure $\Hilb(G)$. For example, if $G$ is compact, then $B(G)$ is not closed in $\CG$ unless $G$ is finite (see for instance \cite{hewittAbstractII}, Theorem~37.4). Let us begin this section with a model-theoretic argument showing that the same holds for pro-oligomorphic groups.

For locally compact groups, the algebra $C_0(G)$ of functions vanishing at infinity is always contained in $\Hilb(G)$. We recall that a function $f\in C(X)$ on a locally compact space $X$ \emph{vanishes at infinity} if for every $\epsilon>0$ there is a compact set $K\subset X$ such that $|f(x)|<\epsilon$ for every $x$ outside~$K$. These functions can be extended continuously to the one-point compactification $X'=X\cup\{\infty\}$ of $X$ by setting $f(\infty)=0$. In the case of a locally compact group $G$ (say, with Haar measure~$\mu$), the one-point compactification of $G$ is a Hilbert-representable semitopological semigroup: it can be embedded into the linear contractions of $L^2(G,\mu)$ by sending $\infty$ to the zero operator, and otherwise extending the regular representation of $G$. Thus, $C_0(G)\subset\Hilb(G)$.

Similarly, for closed subgroups of $S_\infty$, we have a simple way to produce functions in $\Hilb(G)$. Recall that if a group $G$ acts continuously on a discrete set $X$, then we have a natural unitary representation $\pi \colon G\to U(\ell^2(X))$ defined (on the canonical basis of $\ell^2(X)$) by $\pi(g)e_x=e_{gx}$.

\begin{lem}\label{vanishing functions are Hilb}
Let $M$ be a structure, $G=\Aut(M)$. Let $F \colon M^n\to\mbC$ be a function vanishing at infinity and let $a\in M^n$. Then the function $f\in\CG$ given by $f(g)=F(ga)$ belongs to $\Hilb(G)$.
\end{lem}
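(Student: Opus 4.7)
The plan is to realize $f$ as a norm-limit of matrix coefficients of the permutation representation $\pi\colon G\to U(\ell^2(M^n))$ defined by $\pi(g)e_x=e_{gx}$, exactly along the lines suggested by the paragraph preceding the lemma. This representation is continuous because $M$ (and hence $M^n$) is discrete and the $G$-action on $M^n$ is continuous.

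First I would observe that for any single point $b\in M^n$, the ``point-mass'' function
\begin{equation*}
g\mapsto \bigl\langle e_b, \pi(g)e_a\bigr\rangle = \langle e_b, e_{ga}\rangle = \delta_{b,\,ga}
\end{equation*}
is (up to the standard conjugate-linear convention) a matrix coefficient in $B(G)$. By linearity, if $F$ has \emph{finite} support on $M^n$, setting $v=\sum_{b}\overline{F(b)}\,e_b\in\ell^2(M^n)$ gives
\begin{equation*}
\bigl\langle v,\pi(g)e_a\bigr\rangle = \sum_{b}F(b)\,\delta_{b,\,ga}=F(ga)=f(g),
\end{equation*}
so $f=m^\pi_{v,e_a}\in B(G)$.

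For a general $F$ vanishing at infinity, I would use that $M^n$ is discrete, so its compact subsets are exactly its finite subsets; hence the sets $\{x\in M^n : |F(x)|\geq 1/k\}$ are finite. Truncating $F$ to these sets produces finitely-supported functions $F_k$ with $\|F-F_k\|_\infty\to 0$. The previous step yields $f_k(g)=F_k(ga)\in B(G)$, and
\begin{equation*}
\|f-f_k\|_\infty=\sup_{g\in G}\bigl|F(ga)-F_k(ga)\bigr|\leq \|F-F_k\|_\infty\longrightarrow 0.
\end{equation*}
Therefore $f\in\overline{B(G)}=\Hilb(G)$, as required.

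There is really no hard step: the argument is a direct transcription of the classical ``$C_0\subset\Hilb$'' fact for locally compact groups to the setting of a discrete $G$-set, using only that matrix coefficients of the permutation representation separate the points of $Ga$ and that $B(G)$ is a $\ast$-subalgebra of $C(G)$. The only small care needed is fixing a convention for the inner product so that the correspondence $F\mapsto v$ is well defined, and noting that ``vanishing at infinity'' on the discrete space $M^n$ means uniform approximability by finitely supported functions.
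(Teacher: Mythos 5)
Your proof is correct and follows essentially the same route as the paper: reduce to finitely supported $F$ by uniform approximation, then realize $f$ as a matrix coefficient of the permutation representation on $\ell^2(M^n)$ with $v=\sum_b F(b)e_b$ (up to the conjugation convention) and $w=e_a$. The extra care you take with the inner-product convention and with spelling out why ``vanishing at infinity'' means approximability by finitely supported functions is fine but not a substantive difference.
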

\begin{proof}
We can assume that $F$ is zero everywhere except on a finite set $B\subset M^n$, since the general case can be uniformly approximated by instances of this form. Take the natural representation $\pi \colon G\to U(\ell^2(M^n))$ and the vectors $v=\sum_{b\in B}F(b)e_b$, $w=e_a$. Then we have $f(g)=\langle v,\pi(g)w\rangle$, which shows that $f\in B(G)$.
\end{proof}

It is convenient to introduce the following definition. Given an action by isometries $G\actson X$ and a sequence $(x_i)_{i<\omega}\subset X$, let us say that $(x_i)$ is \emph{indiscernible} if for all indices $i_1 < i_2 < \cdots < i_k$ and $j_1 < j_2 < \cdots < j_k$ we have the equality $$[x_{i_1},x_{i_2},\dots,x_{i_k}]=[x_{j_1},x_{j_2},\dots,x_{j_k}]$$ in $X^k\sslash G$. We remark that for the natural action $G\actson M$ of the automorphism group of an $\aleph_0$-categorical structure (classical or metric), by (approximate) homogeneity, this definition coincides with the usual model-theoretic notion of an indiscernible sequence.

The following folklore lemma characterizes indiscernible sequences in Hilbert spaces.
\begin{lem}\label{lem:indisc in Hilbert}
Let $(w_i)_{i<\omega}$ be a sequence of vectors in a Hilbert space $\mcH$, and suppose $(w_i)$ is indiscernible for the action $\UH\actson\mcH$. Then, there are $w',w'_i\in\mcH$ such that $w'_i\perp w'$, $\|w'_i\|=\|w'_j\|$, $w'_i\perp w'_j$ for every $i\neq j$, and $w_i=w'+w'_i$ for every $i$. In particular, $w'$ is the weak limit of $(w_i)_{i<\omega}$.
\end{lem}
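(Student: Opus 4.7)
The key observation is that two tuples in $\mathcal{H}^k$ lie in the same orbit of the diagonal action of $U(\mathcal{H})$ if and only if they have the same Gram matrix of inner products. Applied to the indiscernibility hypothesis, this yields that there are scalars $a, b \in \mathbb{C}$ with
\begin{equation*}
  \langle w_i, w_i \rangle = a \quad \text{for all } i, \qquad \langle w_i, w_j \rangle = b \quad \text{for all } i < j.
\end{equation*}
My first move is to extract from this the guess that $w'$ should be the weak limit of the sequence, and to determine its inner products with the $w_i$: if $w' = \lim_i w_i$ weakly, then $\langle w_i, w' \rangle = \lim_{j > i} \langle w_i, w_j \rangle = b$ for every $i$, and taking the limit again, $\|w'\|^2 = b$, which forces $b \in \mathbb{R}$.

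Next, to actually produce $w'$, I would argue that the sequence $(w_i)$ is bounded (all norms equal $\sqrt{a}$), so by Banach--Alaoglu there is a weakly convergent subsequence $w_{i_k} \to w'$. For any fixed $j$, the scalar $\langle w_j, w_{i_k} \rangle$ equals $b$ as soon as $i_k > j$, so $\langle w_j, w' \rangle = b$ for all $j$, and then taking another weak limit gives $\|w'\|^2 = b$. This identifies $w'$ intrinsically (independently of the subsequence), and a standard argument then shows that the full sequence converges weakly to $w'$: any two weak cluster points $w', w''$ satisfy $\langle w_n, w' - w'' \rangle = 0$ for all $n$, hence also $\langle w', w' - w'' \rangle = \langle w'', w' - w'' \rangle = 0$, giving $w' = w''$.

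Having $w'$, I would set $w'_i \coloneqq w_i - w'$ and verify the three required properties by direct computation, using the values $a, b$ above:
\begin{equation*}
  \langle w'_i, w' \rangle = b - b = 0, \qquad \|w'_i\|^2 = a - 2\,\mathrm{Re}\,b + b = a - b,
\end{equation*}
and for $i \neq j$,
\begin{equation*}
  \langle w'_i, w'_j \rangle = \langle w_i, w_j \rangle - \langle w_i, w' \rangle - \langle w', w_j \rangle + \|w'\|^2 = b - b - b + b = 0,
\end{equation*}
using that $b$ is real, so $\langle w_i, w_j \rangle = b$ for $i > j$ as well.

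There is no real obstacle here; the only slightly delicate point is to notice early on that $b$ must be real (which follows from $\|w'\|^2 = b$ once $w'$ is in hand, or alternatively from $a - 2\mathrm{Re}\,b + b = \|w'_i\|^2 \geq 0$ if one prefers to first produce $w'$ abstractly). Once the values of $a$ and $b$ are extracted from indiscernibility, the rest is a bounded-sequence/weak-compactness argument plus a three-line computation.
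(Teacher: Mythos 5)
Your proof is correct and follows essentially the same route as the paper's: characterize indiscernibility by constancy of the Gram matrix, take a weak accumulation point $w'$, set $w'_i = w_i - w'$, and verify the orthogonality relations by direct computation. You are in fact slightly more careful than the paper in noting that the off-diagonal constant $b$ must be real (via $\|w'\|^2 = \bar b$), a point the paper's one-line justification glosses over.
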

\begin{proof}
  Note that, by homogeneity of the Hilbert space, a sequence $(w_i)$ is indiscernible iff all $w_i$ have the same norm and $\langle w_i, w_j \rangle$ is constant for $i \neq j$. Let $w'$ be a weak accumulation point of the $w_i$. Then by indiscernibility, for all $i \neq j$,
  \begin{equation*}
    \langle w_i, w_j \rangle = \langle w', w_i \rangle = \langle w', w' \rangle.
  \end{equation*}
  Setting $w_i' = w_i - w'$, we easily obtain the claimed properties.
\end{proof}

In the following proposition we suppose $G$ is pro-oligomorphic, so in particular $E(M) = \widehat{G}_L$ as per Fact~\ref{E(M)}, and indiscernible sequences for the natural action $G\actson \widehat{G}_L$ are the same as indiscernible sequences in $E(M)$ in the usual model-theoretic sense. We note also that every function $f \in B(G)$, being left uniformly continuous, extends to a function on $E(M)$.

We prove that functions in $B(G)$ vanishing at infinity must decay at a certain rate along indiscernible sequences.

\begin{prop} \label{p:vanish-quickly}
Let $G$ be a pro-oligomorphic group, say $G=\Aut(M)$. Let $F \colon M^n\to\mbC$  be a function vanishing at infinity, $a\in M^n$, and let $f \colon E(M)\to\mbC$ be given by $f(x)=F(x(a))$.

Suppose $(x_i)_{i<\omega}\subset E(M)$ is an indiscernible sequence such that $(x_i(a))_{i<\omega}$ is non-constant. If $f|_G\in B(G)$, then $$\left|\frac{1}{n}\sum_{i<n}f(x_i)\right| = O(\frac{1}{\sqrt{n}}),$$
where the implicit constant depends only on $f$.
\end{prop}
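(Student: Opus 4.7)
The plan is to realise $f|_G$ as a matrix coefficient, transfer the indiscernibility of $(x_i)$ to an indiscernible sequence of vectors in the Hilbert space, and then apply Lemma~\ref{lem:indisc in Hilbert} to get the $O(1/\sqrt{n})$ cancellation.

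Since $f|_G \in B(G)$, I can write $f(g) = \langle v, \pi(g) w \rangle$ for some continuous unitary representation $\pi \colon G \to \UH$ and vectors $v, w \in \mcH$. As $F$ has effectively finite support (up to an arbitrarily small uniform error), for each $i$ I may choose $g_i \in G$ with $g_i(a) = x_i(a)$; then $f(x_i) = F(x_i(a)) = f(g_i)$. Next, let $H = \{g \in G : g(a) = a\}$, an open subgroup of $G$, and let $K \subset \mcH$ be the closed $G$-invariant subspace generated by $\pi(G) v$. Replacing $w$ by its orthogonal projection onto $K$ does not alter the matrix coefficient (as $v \in K$), and the right-$H$-invariance of $f$ on $G$ translates into $(\pi(h) - 1) w \perp K$ for every $h \in H$, which forces the new $w$ to be $H$-fixed, thanks to $G$-invariance of $K$ (so $\pi$ commutes with the projection onto $K$). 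Consequently, $u_i := \pi(g_i) w$ depends only on $x_i(a)$ and not on the chosen $g_i$.

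Indiscernibility of $(x_i)$ provides, for any increasing tuples $i_1 < \cdots < i_k$ and $j_1 < \cdots < j_k$, an element $g \in G$ with $g \cdot x_{i_l}(a) = x_{j_l}(a)$ for every $l$; combined with $H$-invariance of $w$, this gives $\pi(g) u_{i_l} = u_{j_l}$, so $(u_i)$ is indiscernible for the action $\UH \actson \mcH$. By Lemma~\ref{lem:indisc in Hilbert}, I may write $u_i = u' + u_i''$ with the $u_i''$ of common norm, pairwise orthogonal, and each orthogonal to $u'$. Hence
\begin{equation*}
\frac{1}{n}\sum_{i<n} f(x_i) = \langle v, u' \rangle + \frac{1}{n}\left\langle v, \sum_{i<n} u_i'' \right\rangle,
\end{equation*}
and orthogonality gives $\bigl\|\sum_{i<n} u_i''\bigr\| = \sqrt{n}\,\|u_0''\|$, so the second term has absolute value at most $\|v\|\,\|u_0''\|/\sqrt{n}$.

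It remains to see that $\langle v, u' \rangle = 0$. Since $(x_i(a))$ is non-constant and indiscernible, its entries are pairwise distinct; as $M^n$ is discrete, $x_i(a)$ eventually leaves every finite subset of $M^n$. Because $F$ vanishes at infinity, $f(x_i) = F(x_i(a)) \to 0$; and since $u'$ is the weak limit of $(u_i)$ (again by Lemma~\ref{lem:indisc in Hilbert}), $\langle v, u' \rangle = \lim_i \langle v, u_i \rangle = \lim_i f(x_i) = 0$. The main subtle point of the argument is the $H$-invariant replacement of $w$: it depends on $K$ being $G$-invariant, which allows the orthogonal projection to commute with $\pi$, and in turn makes the definition of $u_i$ depend only on the model-theoretic datum $x_i(a)$.
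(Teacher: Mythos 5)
Your proof is correct and takes essentially the same approach as the paper's: realize $f$ as a matrix coefficient, transfer $(x_i)$ to an indiscernible sequence of vectors in $\mcH$, apply Lemma~\ref{lem:indisc in Hilbert}, and combine $f(x_i)\to 0$ with orthogonality and Cauchy--Schwarz. The only variation is how the vectors are produced --- the paper extends $\pi$ by left-uniform continuity to $E(M)\to E(\mcH)$ and sets $w_i=\pi(x_i)w$, whereas you stay inside $G$ by picking $g_i$ with $g_i(a)=x_i(a)$ and make $u_i$ well-defined via the $H$-fixed replacement of $w$; this is a valid, slightly more hands-on substitute, though note that the existence of such $g_i$ comes from homogeneity of the $\aleph_0$-categorical structure $M$, not from $F$ having essentially finite support.
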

\begin{proof}
Suppose we have a continuous unitary representation $\pi \colon G\to\UH$ such that $f(g)=\langle v,\pi(g)w\rangle$ for all $g\in G$. Being a homomorphism, $\pi$ is left uniformly continuous, so it extends to a representation $\pi \colon E(M)\to E(\mcH)$. (Here, $E(\mcH)$ is the semigroup of isometric linear endomorphisms of $\mcH$, which is also the left completion of $U(\mcH)$.) We have $f(x)=\langle v,\pi(x)w\rangle$ for all $x\in E(M)$.

Since $(x_i)\subset E(M)$ is indiscernible for the action of $G$, the sequence $(w_i)\subset\mcH$ given by $w_i=\pi(x_i)w$ is indiscernible in the Hilbert space $\mcH$ for the action of $\pi(G)$, and thus also for the action of $U(\mcH)$. Let $w'$ and $w'_i$ be as given by Lemma~\ref{lem:indisc in Hilbert}. Since $F$ vanishes at infinity and $(x_i(a))$ is indiscernible and non-constant, we have that $f(x_i)\to 0$.
That is, $\langle v,w'\rangle=0$. We deduce that $$\left|\frac{1}{n}\sum_{i<n}f(x_i)\right|=\left|\langle v,\frac{1}{n}\sum_{i<n}w'_n\rangle\right|\leq\frac{\|v\|\cdot\|\sum_{i<n}w_i'\|}{n}=\frac{\|v\|\sqrt{\sum_{i<n}\|w'_i\|^2}}{n}=\frac{\|v\|\cdot\|w'_0\|}{\sqrt{n}}\leq\frac{\|v\|\cdot\|w\|}{\sqrt{n}}$$
\end{proof}

\begin{cor}
Let $G$ be pro-oligomorphic and infinite. Then $B(G)$ is not closed in the uniform norm.
\end{cor}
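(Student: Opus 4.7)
The plan is to produce a function $f \in \Hilb(G) \setminus B(G)$, which immediately shows that $B(G)$ is strictly contained in its uniform closure $\Hilb(G)$. If $G$ is compact (and infinite), the conclusion is the classical theorem of Hewitt--Ross (Theorem~37.4 in~\cite{hewittAbstractII}, cited above), so I may assume $G$ is non-compact. Since $G$ is non-archimedean with an open basis at~$1$ given by stabilizers $G_a$ of finite tuples, non-compactness of $G$ forces some such stabilizer to have infinite index; equivalently, the action of $G$ on some sort $M_s$ of~$M$ admits an infinite orbit (otherwise $G$ is the inverse limit of its actions on its finite orbits, hence profinite and compact).

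Write $G = \Aut(M)$, fix $a \in M_s$ with $Ga$ infinite, and pick $g_i \in G$ ($i < \omega$) with $b_i := g_i(a)$ pairwise distinct. By the standard Ramsey-type extraction of indiscernibles available in $\aleph_0$-categorical structures, I may pass to a subsequence so that, viewing the $g_i$ as elements of $E(M) = \widehat{G}_L$, the sequence $(g_i)$ is indiscernible; distinctness of the $b_i$ is inherited, so the sequence $\bigl(g_i(a)\bigr)$ is indiscernible and non-constant. Define $F \colon M_s \to \mathbb{C}$ by $F(b_i) = 1/\log(i+2)$ and $F \equiv 0$ elsewhere. Since $\{y \in M_s : |F(y)| \geq \varepsilon\}$ is finite for every $\varepsilon > 0$, $F$ vanishes at infinity, and Lemma~\ref{vanishing functions are Hilb} yields $f(g) := F(g(a)) \in \Hilb(G)$.

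To finish, observe that
\[
\frac{1}{n}\sum_{i<n} f(g_i) \;=\; \frac{1}{n}\sum_{i<n} \frac{1}{\log(i+2)},
\]
which is of order $1/\log n$ and hence not $O(1/\sqrt{n})$. By Proposition~\ref{p:vanish-quickly}, applied to the indiscernible sequence $(g_i)$ with non-constant $\bigl(g_i(a)\bigr)$, this precludes $f$ from lying in $B(G)$, and the corollary follows. The only nontrivial structural step is the reduction to an infinite orbit on a single sort in the non-compact case; once that is in place, the construction is driven entirely by choosing a weight sequence $c_i \to 0$ slowly enough that $\frac{1}{n}\sum_{i<n} c_i$ defeats the $O(1/\sqrt{n})$ bound of Proposition~\ref{p:vanish-quickly}, while still letting $F$ vanish at infinity.
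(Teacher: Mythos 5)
Your argument is correct and follows essentially the same route as the paper: combine Lemma~\ref{vanishing functions are Hilb} with Proposition~\ref{p:vanish-quickly} along a non-constant indiscernible sequence, choosing a weight sequence (your $1/\log(i+2)$ versus the paper's $1/i^{1/3}$) that decays too slowly to satisfy the $O(1/\sqrt{n})$ bound. Your explicit split into the compact case (Hewitt--Ross) and the non-compact case (where an infinite orbit yields the required non-constant indiscernible sequence) is a welcome extra precision, since the paper's parenthetical assertion that such a sequence ``always exists'' tacitly requires $G$ to be non-compact.
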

\begin{proof}
Choose any non-constant indiscernible sequence $(x_i)\subset E(M)$ (which always exists if $M$ is $\aleph_0$-categorical) and an element $a\in M$ such that $(x_i(a))$ is non-constant. Then take $F \colon M\to\mbC$ vanishing at infinity and such that $F(x_i(a))=1/i^{1/3}$. Then, by Lemma~\ref{vanishing functions are Hilb} and Proposition~\ref{p:vanish-quickly}, we obtain that the function defined by $f(g)=F(ga)$ is in $\Hilb(G)$ but not in $B(G)$.
\end{proof}

\subsection{A model-theoretic description of the Hilbert compactification}\label{Description of H(G)}

As explained in Subsection~\ref{The WAP compactification...}, the WAP compactification of a pro-oligomorphic group $G$ is the space of types of pairs of embeddings $x,y\in E(M)$ \emph{restricted to stable formulas}. Dually, this can be stated by saying that $\WAP(G)$ is the closed algebra generated by the functions of the form $$\varphi_{a,b}(g) = \varphi(a,gb),$$ where $\varphi(u,v)$ is a stable formula and $a,b$ are tuples from $M$.
For a more detailed explanation of this duality see \cite[\textsection 5]{bentsa} or \cite[\textsection 4]{iba14}.
Hence it is natural to ask which formulas $\varphi(u,v)$ give rise to functions in the subalgebra $\Hilb(G)$.

We start with the following basic observation.

\begin{lem}
Let $M$ be a structure, $G=\Aut(M)$. Let $\varphi(u,v)$ be a formula defining an equivalence relation on $M^n$ and let $a,b\in M^n$. Then the function $\varphi_{a,b}$ (which takes the value 1 if the elements are related and 0 otherwise) is in $B(G)$.
\end{lem}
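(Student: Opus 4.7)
The plan is to exhibit $\varphi_{a,b}$ explicitly as a unitary matrix coefficient of the natural permutation representation of $G$ on the set of $\varphi$-classes.

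First, I would consider the discrete $G$-set $X = M^n/\varphi$, noting that the action $G \actson X$ is continuous: each class $[c]_\varphi$ is an imaginary element of $M$, and as recalled in Subsection~\ref{The WAP compactification...}, its stabilizer $\{g \in G : g[c]_\varphi = [c]_\varphi\}$ is an open subgroup of $G$. Hence the associated unitary representation $\pi \colon G \to U(\ell^2(X))$, defined on the canonical orthonormal basis by $\pi(g) e_{[c]_\varphi} = e_{g[c]_\varphi}$, is strongly continuous.

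Next, I would set $v = e_{[a]_\varphi}$ and $w = e_{[b]_\varphi}$ and simply compute
\begin{equation*}
\langle v, \pi(g) w \rangle = \langle e_{[a]_\varphi}, e_{g[b]_\varphi} \rangle,
\end{equation*}
which equals $1$ if $[a]_\varphi = g[b]_\varphi$, i.e.\ if $\varphi(a,gb)$ holds, and $0$ otherwise. This is exactly $\varphi_{a,b}(g)$, so $\varphi_{a,b} = m^\pi_{v,w} \in B(G)$.

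There is no real obstacle here: the argument is essentially the same template as Lemma~\ref{vanishing functions are Hilb}, applied to the $G$-set of $\varphi$-classes rather than to $M^n$ itself. The only substantive point, worth mentioning explicitly, is that $\varphi$ being a \emph{formula} (equivalently, $[b]_\varphi$ being an imaginary element) is what guarantees that the stabilizer of each basis vector is open, and hence that $\pi$ is a continuous unitary representation.
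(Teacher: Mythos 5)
Your proof is correct and takes essentially the same route as the paper: the natural permutation representation on $\ell^2(M^n/\varphi)$ with $v=e_{[a]_\varphi}$, $w=e_{[b]_\varphi}$ realizes $\varphi_{a,b}$ as a matrix coefficient. Your additional remark about openness of stabilizers guaranteeing continuity of the representation is a detail the paper leaves implicit, but it is the same argument.
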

\begin{proof}
It suffices to consider the natural representation $\pi \colon G\to U(\ell^2(M^n/\varphi))$ and observe that $\varphi_{a,b}(g)=\langle e_{[a]_\varphi},\pi(g)e_{[b]_\varphi}\rangle$.
\end{proof}

The reader can also check that $\varphi_{a,b}$ belongs to $B(G)$ under the weaker assumption that $\varphi(x,b)$ defines a \emph{weakly normal set}, that is to say, that the canonical parameter of $\varphi(x,b)$ is in the algebraic closure of any tuple $a$ that satisfies the formula.

We want to give a converse to the previous lemma, for $\aleph_0$-categorical structures. For this we invoke the classification theorem of unitary representations of pro-oligomorphic groups proved in \cite{tsaUnitary}.

\begin{fact}[Classification Theorem]
Let $G$ be a pro-oligomorphic group.
\begin{enumerate}
\item Every continuous unitary representation of $G$ is a direct sum of irreducible representations.
\item Every irreducible continuous unitary representation is a subrepresentation of the quasi-regular representation $\pi_V \colon G\to U(\ell^2(G/V))$ for some open subgroup $V\leq G$.
\end{enumerate}
\end{fact}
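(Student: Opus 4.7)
The plan is to reduce the classification to the representation theory of finite-dimensional Hecke algebras, exploiting the non-archimedean structure of $G$ (open subgroups form a neighbourhood basis at the identity) together with Roelcke precompactness. Two preliminary facts drive everything. (a)~For any open subgroups $U,V \leq G$, the double-coset space $U \backslash G / V$ is finite: apply Roelcke precompactness to $U \cap V$ and project. In particular the Hecke algebra $A_V := \mathbb{C}[V \backslash G / V]$ is finite-dimensional. (b)~For any continuous unitary representation $\pi \colon G \to \UH$, the subspace $\bigcup_V \mcH^V$ of vectors fixed by some open subgroup is dense in $\mcH$. Indeed, continuity of $\pi$ at $e$ yields, given $v$ and $\varepsilon > 0$, an open $V$ with $\|\pi(g)v - v\| < \varepsilon$ for $g \in V$; the unique norm-minimizing element of the closed convex hull of $\pi(V)v$ is then a $V$-invariant vector within $\varepsilon$ of $v$.

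Next I would equip $\mcH^V$ with an $A_V$-module structure by letting the double coset $VgV$ act via $T_{VgV} \colon w \mapsto P_V \pi(g) w$, where $P_V$ is orthogonal projection onto $\mcH^V$; this is well defined because $w$ is right-$V$-fixed and because $P_V \pi(h) = P_V$ for $h \in V$. The key observation is that if $\pi$ is $G$-irreducible then $\mcH^V$ is finite-dimensional and $A_V$-irreducible. For any nonzero $w \in \mcH^V$ the submodule $A_V w$ is finite-dimensional (hence closed), and $G$-irreducibility gives $\overline{\pi(G)(A_V w)} = \mcH$. Applying the continuous projection $P_V$ yields $\mcH^V = P_V(\mcH) \subseteq \overline{P_V \pi(G) A_V w} \subseteq A_V w$, so $\mcH^V = A_V w$ is itself finite-dimensional with no proper submodules.

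With these tools, part~(1) reduces to showing that every nonzero continuous unitary representation contains an irreducible subrepresentation. Given $0 \neq v \in \mcH$, use (b) to replace $v$ by a vector $v' \in \mcH^V$, and consider the cyclic subrepresentation $\pi_{v'} := \overline{\pi(G) v'}$. By cyclicity, the commutant of $\pi_{v'}(G)$ restricts faithfully to $\pi_{v'}^V$; since $\pi_{v'}^V \subseteq P_V(\overline{\pi(G) v'}) \subseteq \overline{A_V v'}$ is finite-dimensional, the commutant is finite-dimensional, and standard von Neumann algebra theory decomposes $\pi_{v'}$ as a finite direct sum of irreducibles. Zorn's lemma applied to mutually orthogonal families of irreducible subrepresentations then yields the full decomposition asserted in~(1).

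For part~(2) I would apply part~(1) to $\pi_V$ itself and write $\pi_V = \bigoplus_i \sigma_i^{n_i}$ with the $\sigma_i$ distinct irreducibles; each summand contributes a nonzero $V$-fixed vector, since $\delta_{eV}$ is $V$-invariant and cyclic, hence has nonzero component in every summand. The Hecke orbit of $\delta_{eV}$ spans $\pi_V^V$, so $\pi_V^V$ is a cyclic $A_V$-module, hence a quotient of the regular module of the semisimple algebra $A_V$, and therefore realises every isomorphism class of irreducible $A_V$-module. Consequently, for any irreducible $\pi$ with $\mcH^V \neq 0$, the Hecke-module $\mcH^V$ is isomorphic to $\sigma_i^V$ for some $i$. \emph{The main obstacle} is to promote this Hecke-module isomorphism to an isomorphism $\pi \cong \sigma_i$ of $G$-representations, i.e.\ to establish injectivity of the functor $\pi \mapsto \mcH^V$ on isomorphism classes of irreducibles admitting a $V$-fixed vector. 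I would handle this by choosing a unitary $A_V$-isomorphism that matches cyclic vectors $v \leftrightarrow u$, and noting that the matrix coefficients $g \mapsto \langle \pi(g) v, v \rangle$ and $g \mapsto \langle \sigma_i(g) u, u \rangle$ are $V$-bi-invariant and agree on every double coset (the Hecke action $T_{VgV}$ encodes exactly the pairings $\langle \pi(g) v, w\rangle$ for $w \in \mcH^V$); GNS reconstruction from this positive-definite function then recovers $\pi \cong \sigma_i$.
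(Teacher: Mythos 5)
The paper does not prove this statement itself; it is imported from \cite{tsaUnitary}, so the relevant comparison is with Tsankov's proof there. Your part (1) follows essentially that route and is correct: density of $\bigcup_V \mcH^V$ via the circumcenter trick, the observation that $P_V\pi(g)w$ depends only on the double coset $VgV$ (so a cyclic representation generated by a $V$-fixed vector has $V$-fixed subspace of dimension at most $|V\backslash G/V|$, hence finite-dimensional commutant), and Zorn's lemma.

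Part (2), however, has a genuine gap, and the intermediate claim you aim for is actually false: it is \emph{not} true that every irreducible $\pi$ with $\mcH^V\neq 0$ embeds in $\ell^2(G/V)$ for that same $V$. Take $G=S_\infty$, $V$ the stabilizer of a point, and $\pi$ the trivial representation: then $\mcH^V=\mcH\neq 0$, but $\ell^2(G/V)=\ell^2(\mathbb{N})$ is itself irreducible and nontrivial (its commutant is scalar), so $\pi$ does not occur in it. (The Fact survives because the trivial representation sits in $\ell^2(G/G)$; the subgroup $V$ must be chosen depending on $\pi$, essentially as the exact stabilizer of a well-chosen vector, not as an arbitrary open subgroup admitting a fixed vector.) The precise step that fails is ``a cyclic module over a semisimple algebra is a quotient of the regular module, and therefore realises every isomorphism class of irreducible module'': a quotient of the regular module realises only \emph{some} of the irreducibles (already $\mathbb{C}$ over $\mathbb{C}\oplus\mathbb{C}$ is a counterexample). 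Concretely, in the $S_\infty$ example $\ell^2(G/V)^V=\mathbb{C}\delta_{eV}$ and the nontrivial double coset acts on it by $0$, whereas it acts by $1$ on the $V$-fixed line of the trivial representation; these are non-isomorphic modules. Two further points need attention: absent a Haar measure on $V$, it is not clear that $\mathbb{C}[V\backslash G/V]$ carries a convolution product making all the spaces $\mcH^V$ modules over one fixed algebra (you only ever use the operators $P_V\pi(g)P_V$, and their products need not lie in their linear span a priori); and even with the correct choice of $V$, the substantive content of part (2) is a Bessel-type boundedness estimate showing that $v\mapsto\bigl(\langle v,\pi(g)\xi\rangle\bigr)_{gV}$ actually lands in $\ell^2(G/V)$ and is continuous --- this is where the work in \cite{tsaUnitary} lies, and it is missing from your sketch.
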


\begin{prop}\label{Hilb by formulas}
Let $G$ be pro-oligomorphic, $G=\Aut(M)$. Then $\Hilb(G)$ is the closed linear span of the functions of the form $$\varphi_{a,b}(g) = \varphi(a,gb),$$ where $\varphi(u,v)$ is a definable equivalence relation on some power $M^n$ and $a,b$ are tuples in $M^n$.
\end{prop}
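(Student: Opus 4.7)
The plan is to prove both inclusions. The first is immediate from the preceding lemma: for any definable equivalence relation $\varphi(u,v)$ on $M^n$ and tuples $a, b \in M^n$, the function $\varphi_{a,b}$ lies in $B(G)$, hence in $\Hilb(G)$. Since $\Hilb(G)$ is a closed subspace of $C(G)$, it contains the closed linear span of all such functions. The substantive part is the reverse inclusion, for which it suffices to show that every matrix coefficient of a continuous unitary representation of $G$ lies in the closed linear span $A$ of the functions $\varphi_{a,b}$.

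Let $f = m^\pi_{v,w} \in B(G)$. By the Classification Theorem, $\pi$ decomposes as a direct sum $\bigoplus_i \pi_i$ of irreducibles. Writing $v = (v_i)$ and $w = (w_i)$, one has $f(g) = \sum_i \langle v_i, \pi_i(g) w_i \rangle$, and by Cauchy--Schwarz the tail of this series is bounded uniformly in $g$ by $\bigl(\sum_{i > N} \|v_i\|^2\bigr)^{1/2} \bigl(\sum_{i > N} \|w_i\|^2\bigr)^{1/2}$, which tends to $0$ as $N \to \infty$. Hence $f$ is a uniform limit of finite sums of matrix coefficients of irreducible representations. By the second clause of the theorem, each such irreducible embeds into some quasi-regular representation $\pi_V$ with $V \leq G$ open, and matrix coefficients of a subrepresentation are matrix coefficients of the ambient representation. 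So the task reduces to showing that every matrix coefficient of $\pi_V$ belongs to $A$, for each open $V$.

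Fix such a $V$. By the correspondence recalled above between open subgroups and imaginary elements, there exist a definable equivalence relation $\varphi(u,v)$ on some $M^n$ and a tuple $a \in M^n$ with $V$ the stabilizer of $[a]_\varphi$. Identifying $G/V$ with the orbit $G \cdot [a]_\varphi$ via $gV \mapsto [ga]_\varphi$, the natural orthonormal basis $\{e_{[ha]_\varphi}\}$ of $\ell^2(G/V)$ yields
\[
\langle e_{[h_1 a]_\varphi},\, \pi_V(g) e_{[h_2 a]_\varphi} \rangle \;=\; \varphi(h_1 a,\, g h_2 a) \;=\; \varphi_{h_1 a,\, h_2 a}(g),
\]
which is in $A$. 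Since $\|m^{\pi_V}_{v,w}\|_\infty \leq \|v\|\,\|w\|$, the map $(v,w) \mapsto m^{\pi_V}_{v,w}$ from $\ell^2(G/V) \times \ell^2(G/V)$ into $C(G)$ (with the uniform norm) is jointly continuous. Approximating arbitrary $v, w \in \ell^2(G/V)$ by finite linear combinations of basis vectors therefore places $m^{\pi_V}_{v,w}$ in $A$, completing the proof.

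The main obstacle is conceptual rather than technical: one must recognize that, through the dictionary between open subgroups of $G$ and imaginaries of $M^\eq$, the basic matrix coefficients of the quasi-regular representations provided by the Classification Theorem are \emph{precisely} the functions $\varphi_{a,b}$ attached to formulas $\varphi$ defining equivalence relations. Once this identification is made, the remaining work is routine norm-approximation using Cauchy--Schwarz and the boundedness of matrix coefficients.
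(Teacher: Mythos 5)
Your proof is correct and follows essentially the same route as the paper: reduce via the Classification Theorem to matrix coefficients of quasi-regular representations $\pi_V$, and use the correspondence between open subgroups and imaginaries to identify the basic matrix coefficients with the functions $\varphi_{a,b}$. You merely spell out the approximation steps (the Cauchy--Schwarz tail estimate and the joint norm-continuity of $(v,w)\mapsto m^{\pi_V}_{v,w}$) that the paper leaves implicit.
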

\begin{proof}
It suffices to show that every $f\in B(G)$ can be uniformly approximated by linear combinations of functions of this form. By the classification theorem, every continuous unitary representation is a subrepresentation of one of the form $\pi \colon G\to U\left(\bigoplus_k\ell^2(G/V_k)\right)$, where each $V_k$ is an open subgroup of~$G$. Now, every matrix coefficient of $\pi$ can be uniformly approximated by a linear combination of \emph{basic} matrix coefficients, that is, given by $$g\mapsto\langle e_{g_0V_k},\pi(g)e_{g_1V_k}\rangle$$ for vectors $e_{g_0V_k}$, $e_{g_1V_k}$ from the canonical basis of $\ell^2(G/V_k)$. Fix an open subgroup $V=V_k$; it is the stabilizer of some imaginary element $[c]_\varphi\in M^\eq$. If we take $a=g_0c$ and $b=g_1c$, we have that $g_0V=gg_1V$ if and only if $[a]_\varphi=g[b]_\varphi$. In other words, $$\langle e_{g_0V},\pi(g)e_{g_1V}\rangle=\varphi(a,gb).$$ The proposition follows.
\end{proof}

Dually, this characterization of $\Hilb(G)$ will provide a nice model-theoretic description of the Hilbert compactification $H(G)$.

We fix a pro-oligomorphic group $G=\Aut(M)$. Let $K=M^\eq\cup\{\infty\}$ be the one-point compactification of $M^\eq$, and let
\begin{equation*}
  \Xi = \{p \in K^K : p(\infty) = \infty \text{ and $p$ is injective on } p^{-1}(M^\eq) \}.
\end{equation*}
Then $\Xi$, equipped with composition and the product topology, is a compact semitopological inverse semigroup (in fact, isomorphic to the semigroup of partial bijections of $M^\eq$). Let $P(M)=\overline{G}\subset \Xi$ be the closure of $G$ in the product space $K^K$ (where we set $g(\infty)=\infty$ for every $g\in G$). Then, if we think of an element $p\in K^K$ as a partial map $M^\eq\to M^\eq$ (undefined on $a$ whenever $p(a)=\infty$), we get the following.

\begin{prop}
The elements of $P(M)$ are precisely the partial elementary maps of $M^\eq$ with algebraically closed domain. Besides, $P(M)$ is closed under composition, and with this operation it becomes a semitopological $*$-semigroup compactification of $G$, which is moreover an inverse semigroup.
\end{prop}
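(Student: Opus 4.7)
The statement has three ingredients: identifying the elements of $P(M)$ as partial elementary maps with algebraically closed domain, establishing closure under composition (producing a semitopological $*$-semigroup compactification of $G$), and verifying the inverse-semigroup structure. I will tackle them in that order.

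For the identification, take $p \in P(M)$ and write $p = \lim_i g_i$ in $K^K$ with $g_i \in G$. Since points of $M^\eq$ are isolated in $K$, convergence $g_i(a) \to p(a)$ for $a$ in the domain $A = p^{-1}(M^\eq)$ means $g_i(a) = p(a)$ eventually, so $p|_A$ is a pointwise limit of automorphisms and hence elementary. The crux is that $A$ is algebraically closed: if $b \in \acl(A_0)$ for some finite $A_0 \subset A$, then for $i$ large $g_i|_{A_0} = p|_{A_0}$, which forces $g_i(b)$ to lie in the finite set obtained by applying any fixed extension of $p|_{A_0}$ to the finitely many $A_0$-conjugates of $b$; hence $p(b) \neq \infty$ and $b \in A$. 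Conversely, given a partial elementary $f\colon A \to B$ with $A$ algebraically closed, define $p \in \Xi$ by $p|_A = f$ and $p(K \setminus A) = \{\infty\}$. A basic neighborhood of $p$ in $K^K$ imposes finitely many conditions of the form $g(a) = f(a)$ ($a \in A_0 \subset A$ finite) and $g(c) \notin F$ ($c \in C \subset M^\eq \setminus A$ finite, $F \subset M^\eq$ finite). By homogeneity of $M^\eq$, fix $g_0 \in G$ extending $f|_{A_0}$. The open subgroup $H = \{h \in G : h|_{A_0} = \mathrm{id}\}$ is Polish, and since $A$ is algebraically closed, each $c \in C$ satisfies $c \notin \acl(A_0)$ and therefore has infinite $H$-orbit. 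Hence for each $c$, the set $\{h \in H : g_0 h(c) \in F\}$ is a finite union of cosets of an infinite-index subgroup of $H$, hence meager; by Baire category their intersection over $c \in C$ is proper, and any $h$ in the complement yields $g = g_0 h$ belonging to the chosen neighborhood of $p$.

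Next, composition on $\Xi$ is separately continuous on $K^K$ by a direct computation on basic open sets, using that every $q \in \Xi$ is injective on $q^{-1}(M^\eq)$ and hence $q^{-1}(F)$ is finite for every finite $F \subset M^\eq$. Closure of $P(M)$ under composition follows at once: if $p = \lim_i g_i$ and $q = \lim_j h_j$, then $pq = \lim_j(ph_j) = \lim_j \lim_i g_i h_j$ is an iterated limit in $\overline{G} = P(M)$. Defining $p^* \in \Xi$ as the partial inverse of $p|_A$, we have $p^* \in P(M)$ (the inverse of an elementary map between algebraically closed sets is again such), it extends group inversion, and continuity of $^*$ is checked directly on basic neighborhoods using the same isolation/injectivity facts. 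This yields the semitopological $*$-semigroup compactification structure. Finally, $\Xi$ is the symmetric inverse semigroup of $M^\eq$, which is an inverse semigroup, and $P(M) \subset \Xi$ inherits this: for every $p \in P(M)$, $p^*$ is its unique inverse.

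The main obstacle is the converse in the first paragraph, i.e.\ showing that every partial elementary map with algebraically closed domain actually arises as a limit of honest automorphisms. The algebraic-closure hypothesis is precisely what makes the points outside $A$ free to be scattered off to $\infty$, and the Baire category argument in the Polish stabilizer $H$ is what makes the scattering uniform across the finitely many external points in any basic neighborhood.
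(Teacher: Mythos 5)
Your overall architecture matches the paper's: the forward identification, the closure under composition, and the involution are all routine once one notes that $\Xi$ is a compact semitopological inverse semigroup, and the real content is the converse approximation of a partial elementary map with algebraically closed domain by genuine automorphisms. However, your justification of that key step contains an error. The set $\{h\in H : h(c)=d\}$ is, when nonempty, a coset of the stabilizer $H_c$, and $H_c$ is an \emph{open} subgroup of $H$ (stabilizers of elements of $M^\eq$ are open). A nonempty coset of an open subgroup is a nonempty open subset of the Polish group $H$, hence is \emph{not} meager, no matter how large its index; so ``a finite union of cosets of an infinite-index subgroup, hence meager'' is false, and the Baire category argument collapses. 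The conclusion you want is nevertheless true, but for a different reason: by B.\,H.~Neumann's covering lemma, a group is never the union of finitely many cosets of subgroups all of infinite index. Since each $c\in C$ lies outside $A\supseteq\acl(A_0)$, its $H$-orbit is infinite and $H_c$ has infinite index, so the finitely many cosets $\{h\in H: h(c)=d\}$ (over $c\in C$ and $d\in g_0^{-1}(F)$) cannot exhaust $H$, and any $h$ outside their union gives the required $g=g_0h$. With ``meager'' replaced by Neumann's lemma, your argument goes through.

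For comparison, the paper proves density by a model-theoretic device instead: it realizes the type of the external tuple $b$ over $a'$ by some $b''$ satisfying $b''\ind_{a'}C$, and then non-triviality of stable independence ($\acl(b'')\cap\acl(C)\subset\acl(a')$), together with the fact that $b''$ is disjoint from $\acl(a')$, forces $b''$ to miss $C$. Your route, once repaired, is more elementary in that it is purely permutation-group-theoretic and does not invoke the independence calculus of Fact~\ref{properties of ind,equiv}; the paper's is shorter given that this machinery is already in place.
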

\begin{proof}
  It is clear that any $p\in P(M)$ is a partial elementary map of $M^\eq$, and also that its domain must be algebraically closed. Conversely, let $p \colon A\to M^\eq$ be an elementary map with $A$ algebraically closed. Fix a finite tuple $a$ from $A$, a finite tuple $b$ disjoint from $A$ and a finite subset $C\subset M^\eq$ (intended as the complement of a neighbourhood of $\infty$ in $K$); denote $a'=p(a)$. Choose a tuple $b'$ such that $ab\equiv a'b'$, then take $b''$ satisfying $b''a'\equiv b'a'$ and $b''\ind_{a'}C$. Since $b$ is disjoint from $\acl(a)\subset A$ we have that $b''$ is disjoint from $\acl(a')$, whence $b''$ is disjoint from $C$. Now, by homogeneity there is $g\in G$ such that $ga=a'$ and $gb=b''$. This shows that $p$ can be approximated by elements of $G$ in the topology of $K^K$.

Finally, $P(M)$, being a closed subsemigroup of $\Xi$ closed under inverses, is also a compact inverse semitopological semigroup.
\end{proof}

We remark that we have defined $P(M)$ directly as a family of partial maps on $M^\eq$, and not on~$M$. Unlike the case of $E(M)$, which can be identified with $E(M^\eq)$, the previous construction applied to $M$ would yield a smaller object (a factor of $P(M)$), which may lose information. However, we have the following.

\begin{rem}\label{rem:weak elim imag}
  The structure $M$ has \emph{weak elimination of imaginaries} (see for instance \cite[\textsection 8.4]{tenzie}) if and only if every algebraically closed set $A\subset M^\eq$ is equal to the definable closure of $A\cap M$. It follows that $M$ has weak elimination of imaginaries if and only if $P(M)$ coincides with its factor consisting of partial elementary maps of $M$ with (relatively) algebraically closed domain.
\end{rem}

We also observe that $P(M)$ can be alternatively defined as the closure of the image of $G$ inside $\Theta(\ell^2(M^\eq))$, induced by the natural unitary representation $G\to U(\ell^2(M^\eq))$. Indeed, by identifying $\infty\in K$ with the zero of the Hilbert space, we have natural topological embeddings
$$G \subset \Xi \subset\Theta(\ell^2(M^\eq)).$$
In particular, $P(M)$ is a factor of the Hilbert compactification.

\begin{theorem}\label{H(G)=P(M)}
Let $G=\Aut(M)$ be a pro-oligomorphic group. Then $P(M)$ coincides with the Hilbert compactification $H(G)$.
\end{theorem}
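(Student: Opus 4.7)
The plan is to show the identity $H(G) = P(M)$ by arguing that the canonical factor map $H(G) \to P(M)$ (already noted in the excerpt, coming from the embedding of $P(M)$ into $\Theta(\ell^2(M^\eq))$) is in fact an isomorphism. Via Gelfand duality, this amounts to showing the equality of subalgebras $\mcA(P(M)) = \Hilb(G)$ of $\CG$. The inclusion $\mcA(P(M)) \subseteq \Hilb(G)$ is exactly the statement that $P(M)$ is a factor of $H(G)$, so the only thing to prove is the reverse inclusion $\Hilb(G) \subseteq \mcA(P(M))$.

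By Proposition \ref{Hilb by formulas}, $\Hilb(G)$ is the closed linear span of the functions $\varphi_{a,b}(g) = \varphi(a,gb)$ where $\varphi(u,v)$ is a definable equivalence relation on some $M^n$ and $a,b \in M^n$. Since $\mcA(P(M))$ is closed and linear, it suffices to check that each such $\varphi_{a,b}$ extends continuously to $P(M)$. For this I would define $\tilde\varphi_{a,b}\colon P(M)\to\{0,1\}$ by
\begin{equation*}
  \tilde\varphi_{a,b}(p) = \begin{cases} 1 & \text{if $[b]_\varphi$ lies in the domain of $p$ and } p([b]_\varphi) = [a]_\varphi, \\ 0 & \text{otherwise.} \end{cases}
\end{equation*}
The restriction of $\tilde\varphi_{a,b}$ to $G$ is $\varphi_{a,b}$, since for $g\in G$ the condition $g\cdot[b]_\varphi = [a]_\varphi$ is equivalent to $\varphi(a, gb)$ holding in $M$.

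Continuity of $\tilde\varphi_{a,b}$ on $P(M)$ follows from the topology of $K^K$ together with the fact that $M^\eq$ is discrete in the one-point compactification $K$. Indeed, if $p_i \to p$ in $P(M)$, then $p_i([b]_\varphi) \to p([b]_\varphi)$ in $K$: if $p([b]_\varphi) = [a]_\varphi \in M^\eq$, then eventually $p_i([b]_\varphi) = [a]_\varphi$, whereas if $p([b]_\varphi) \neq [a]_\varphi$ (either $\infty$ or a different imaginary), then eventually $p_i([b]_\varphi) \neq [a]_\varphi$. Thus $\tilde\varphi_{a,b}(p_i)$ is eventually equal to $\tilde\varphi_{a,b}(p)$. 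Hence $\tilde\varphi_{a,b} \in C(P(M))$ and $\varphi_{a,b} \in \mcA(P(M))$, which completes the proof.

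I do not expect a serious obstacle: the heavy lifting has already been done by the classification theorem of unitary representations, packaged in Proposition \ref{Hilb by formulas}, and by the construction of $P(M)$ as a closure inside $\Xi \subset \Theta(\ell^2(M^\eq))$. The proof is essentially the observation that the generating matrix coefficients of $\Hilb(G)$ are precisely the characteristic functions of the clopen conditions ``$p([b]_\varphi) = [a]_\varphi$'' on $P(M)$, so they extend tautologically; the only thing to verify is continuity, which is immediate from the discreteness of $M^\eq$ inside $K$.
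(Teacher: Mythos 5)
Your proof is correct and is essentially the paper's argument in dual form: the paper checks that the functions $\varphi_{a,b}$ from Proposition~\ref{Hilb by formulas} separate points of $H(G)$ and builds the explicit isomorphism $[x,y]_H\mapsto x^{-1}\circ y$, while you verify the equivalent Gelfand-dual statement that these functions lie in $\mcA(P(M))$, so that $\mcA(P(M))=\Hilb(G)$. Both arguments rest on exactly the same two ingredients --- Proposition~\ref{Hilb by formulas} and the embedding of $P(M)$ into $\Theta(\ell^2(M^\eq))$ --- so nothing is missing.
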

\begin{proof}
  This follows from the previous observation and the fact, implied by the classification theorem, that every separable continuous unitary representation of $G$ is a subrepresentation of $G\to U(\ell^2(M^\eq))$.
  Nevertheless, let us give an explicit isomorphism $H(G)\to P(M)$ based on the model-theoretic description of $W(G)$.
  Given endomorphisms $x,y\in E(M^\eq)$, let $[x,y]_H$ denote the image of $[x,y]\in W(G)$ under the canonical map $W(G)\to H(G)$: it is determined by the values of all $f \in \Hilb(G)$ at $[x,y]$ (or at $[x,y]_R$, for that matter).
  By Proposition~\ref{Hilb by formulas}, these values are in turn determined by the values $\varphi_{a,b}\bigl( [x,y] \bigr) = \varphi(x(a),y(b))$ for definable equivalence relations $\varphi(u,v)$ and parameters $a,b$ from $M$ (i.e., these $\varphi_{a,b}$ separate points of $H(G)$).
  Equivalently, $[x,y]_H$ is determined by the values $x(a)=y(b)$ for parameters $a,b\in M^\eq$. We consider the map
$$[x,y]_H\in H(G)\mapsto x^{-1}\circ y\in \Xi,$$
where, on the right, $x,y$ are seen as elements of $\Xi$. By our description of $H(G)$, this is well-defined and injective, and it is clearly a continuous $G$-map. Since $H(G)$ is compact, its image is~$P(M)$.
\end{proof}

\subsection{Characterization of Eberlein pro-oligomorphic groups}\label{Eberlein pro-oligomorphic}

A corollary of the previous results is that if a pro-oligomorphic group $G$ is Eberlein (that is, if we have $W(G)=H(G)$), then $W(G)$ must be an inverse semigroup. As it turns out, this is a sufficient condition. Moreover, this property is related to the following model-theoretic notion.

\begin{defin}
Let $M$ be a structure. We will say that $M$ is \emph{one-based for stable independence} if for any algebraically closed sets $A,B\subset M^\eq$ we have $$A\ind_{A\cap B}B.$$ Equivalently: if for any tuple $a$ and set $B$ we have $\Cb(a/B)\subset\acl(a)$.
\end{defin}

\begin{theorem}\label{Main}
Let $G=\Aut(M)$ be a pro-oligomorphic group. The following are equivalent:
\begin{enumerate}
\item $W(G)$ is an inverse semigroup.
\item The idempotents of $W(G)$ commute.
\item $M$ is one-based for stable independence.
\item $G$ is Eberlein.
\end{enumerate}
\end{theorem}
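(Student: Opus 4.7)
The plan is to establish the four equivalences via the cycle $(4) \Rightarrow (1) \Rightarrow (2) \Rightarrow (3) \Rightarrow (4)$. The first two arrows are formal: $(4) \Rightarrow (1)$ is immediate from Theorem~\ref{H(G)=P(M)} together with the preceding proposition, since $W(G) = H(G) \cong P(M)$ is an inverse semigroup; and $(1) \Rightarrow (2)$ is part of Fact~\ref{equivalence for inverse semigroups}.

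For $(3) \Rightarrow (4)$, I assume $M$ is one-based and show that the canonical factor map $W(G) \to H(G) = P(M)$ is injective. Take $[x_1, y_1], [x_2, y_2] \in W(G)$ with the same image $\hat{p} = x_i^{-1} \circ y_i$ in $P(M)$. Using the $G$-action on $W(G)$ and the homogeneity of $M^\eq$, find $g \in G$ with $g y_1 = y_2$, reducing to the case $y_1 = y_2 = y$. Then $x_1|_E = x_2|_E$ for $E = \dom(\hat{p}^{-1})$, and the algebraically closed set $C := x_i \cap y = x_i(E)$ is the same for $i = 1, 2$. By one-basedness, $x_i \ind_C y$, and since one-basedness moreover forces $\Cb(x_i/C) \subset \acl(x_i) \cap C$, a stationarity argument (Fact~\ref{properties of ind,equiv}(5)) yields $\tp^s(x_1 y) = \tp^s(x_2 y)$, i.e., $[x_1, y] = [x_2, y]$ in $W(G)$.

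The main work lies in $(2) \Rightarrow (3)$. Given a tuple $a \in M^\eq$ and an algebraically closed $B \subset M^\eq$, the goal is $\Cb(a/B) \subset \acl(a)$. Using existence (Fact~\ref{properties of ind,equiv}(4)) and non-triviality, one first reduces to the case where $B$ is a countable elementary substructure of $M^\eq$, and $\acl(a)$ sits inside a countable elementary substructure $A \subset M^\eq$ satisfying $A \ind_{\acl(a)} B$, so that $A \cap B = \acl(a) \cap B$ and $\Cb(a/B) = \Cb(A/B)$ modulo suitable bookkeeping. Pick elementary embeddings $x_A, y_A \in E(M)$ with $x_A(M^\eq) = y_A(M^\eq) = A$ and arrange $x_A \equiv^s_A y_A$ and $x_A \ind_A y_A$, so that by Lemma~\ref{idempotents and regular elements}(1), $e_A := [x_A, y_A]$ is an idempotent of $W(G)$; do the same for $B$, producing $e_B$. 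Then compute $e_A e_B$ and $e_B e_A$ via the stable semigroup law $[u, v][v, w] = [u, w]$ (applicable when $u \ind_v w$), first choosing representatives sharing a middle term. The hypothesis $e_A e_B = e_B e_A$ translates into a stable-independence identity between $A$ and $B$ over their intersection, yielding $A \ind_{A \cap B} B$ and hence $\Cb(a/B) \subset \acl(a)$.

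The main obstacle is the final step of $(2) \Rightarrow (3)$: turning the semigroup equality $e_A e_B = e_B e_A$ in $W(G)$ into the concrete model-theoretic statement $A \ind_{A \cap B} B$. This demands delicate manipulation of representatives in $E(M) \times E(M)$ — navigating existence, symmetry, transitivity, stationarity and non-triviality as listed in Fact~\ref{properties of ind,equiv} — and a precise understanding of how the stable semigroup law of $W(G)$ interacts with canonical bases and algebraic closures.
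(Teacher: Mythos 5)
Your cycle differs from the paper's in one essential place: the paper never proves $(2)\Rightarrow(3)$ directly. Instead it proves $(2)\Rightarrow(1)$ by the short $*$-algebra computation $pp^*p=x^*yy^*xx^*y=x^*xx^*yy^*y=x^*y=p$ (using that $xx^*$ and $yy^*$ are idempotents which, by hypothesis, commute), so that every element of $W(G)$ is regular; it then gets $(1)\Rightarrow(3)$ by quoting the characterization of regular elements $[x,y]$ as those with $x\ind_{x\cap y}y$ (Lemma~\ref{idempotents and regular elements}(2), proved beforehand) and a short choice-of-representatives argument to pass from models $x,y$ to arbitrary algebraically closed $A,B$. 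Your steps $(4)\Rightarrow(1)$, $(1)\Rightarrow(2)$ and $(3)\Rightarrow(4)$ match the paper (the last one is exactly the stationarity argument used there, modulo your cosmetic reduction to a common second coordinate, which the paper achieves by choosing a common first coordinate).

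The genuine gap is your $(2)\Rightarrow(3)$. You construct idempotents $e_A=[x_A,y_A]$ and $e_B$ attached to $A$ and $B$ and assert that the equality $e_Ae_B=e_Be_A$ ``translates into'' $A\ind_{A\cap B}B$, but you do not carry out that translation, and you yourself flag it as ``the main obstacle.'' This is precisely where all the model-theoretic content lives: one must choose compatible representatives with a common, independent middle term to compute the two products, identify the intersection of the resulting pair of models with $A\cap B$, and then run the $\varphi$-rank/canonical-base argument that the paper has already packaged into the proof of Lemma~\ref{idempotents and regular elements}. Without either invoking that lemma (which your route deliberately bypasses) or reproducing its proof, the implication is not established. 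There is also a secondary issue in the same step: for a general algebraically closed $A\subsetneq M^\eq$ one cannot take $x_A(M^\eq)=A$ (images of elements of $E(M)$ are elementary submodels), so the correct idempotent is $[x,y]$ with $x\cap y=A$, $x\equiv^s_A y$, $x\ind_A y$; your ``reduction to elementary substructures'' with the bookkeeping $\Cb(a/B)=\Cb(A/B)$ would need the same transitivity manipulations the paper uses in its $(1)\Rightarrow(3)$ step, and is likewise only sketched. The cleanest repair is simply to replace your $(2)\Rightarrow(3)$ by the paper's two-step route $(2)\Rightarrow(1)\Rightarrow(3)$, which reduces everything to the already-proved regularity criterion.
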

\begin{proof}
$(1)\Rightarrow (2)$ is just a consequence of the general characterization referred in Fact \ref{equivalence for inverse semigroups}.

$(2)\Rightarrow (1)$: Let $p\in W(G)$, say $p=[x,y]$ for $x,y\in E(M)$. Identifying $x \in E(M)$ with $[1,x] \in W(G)$, we may write $p=[x,y]=[x,1][1,y]=x^*y$. Now, for any $z\in E(M)$ we have $z^*z=1$, so the element $zz^*$ is an idempotent. If idempotents commute, we obtain $pp^*p=x^*yy^*xx^*y=x^*xx^*yy^*y=x^*y=p$.

$(1)\Rightarrow (3)$: By hypothesis, every element is regular, so by Lemma \ref{idempotents and regular elements} we have $x\ind_{x\cap y}y$ for any $x,y\in E(M)$. Now take algebraically closed sets $A,B\subset M^\eq$. By replacing $AB$ by an equivalent copy if necessary, we can find $x\in E(M)$ such that $A\subset x$ and $x\ind_AB$. Again, by replacing $xAB$ by an equivalent copy, we can find $y\in E(M)$ such that $B\subset y$ and $x\ind_By$. In particular, $x\cap y = x\cap B = A\cap B$. Since $x\ind_{x\cap y}y$ and $x\cap y = A\cap B$, we have $x\ind_{A\cap B}y$. Hence $A\ind_{A\cap B}B$.

$(3)\Rightarrow (4)$: We want to show that the canonical map $W(G)\to H(G)$ is injective. Given $p,q\in W(G)$, we can always choose $x,y,z\in E(M)$ such that $p=[x,y]$ and $q=[x,z]$. If the images of $p$ and $q$ in $H(G)$ coincide, then $x\cap y=x\cap z=:C$, and moreover $y\equiv^s_Cz$. Since $M$ is one-based for stable independence, $y\ind_Cx$ and $z\ind_Cx$. By stationarity, we get $y\equiv^s_xz$, that is to say, $p=q$.

$(4)\Rightarrow (1)$: Clear from the identification $H(G)=P(M)$.
\end{proof}

\begin{cor}\label{strongly Eberlein}
The following are equivalent:
\begin{enumerate}
\item $G$ is strongly Eberlein.
\item $M$ is $\aleph_0$-stable (i.e., the space of types $S_u(M)$, in any finite variable $u$, is countable).
\item The intersection $\bigcap_{x \in E(M)}E(M)\cdot x$ is non-empty.
\end{enumerate}
Moreover, if the previous conditions hold, the action of $G$ on $S_u(M)$ is oligomorphic.
\end{cor}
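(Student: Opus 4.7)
The plan is to derive $(1) \Leftrightarrow (2)$ by combining Theorem~\ref{Main} with the known characterization of WAP groups in the pro-oligomorphic setting and a classical deep result in model theory, and then establish $(2) \Leftrightarrow (3)$ via the theory of prime models.

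First I would handle $(1) \Leftrightarrow (2)$. Strongly Eberlein means $\Hilb(G) = \UC(G)$, which, given the chain $\Hilb(G) \subseteq \WAP(G) \subseteq \UC(G)$, decomposes into $\Hilb(G) = \WAP(G)$ (Eberlein) together with $\WAP(G) = \UC(G)$ ($G$ a WAP group). By Theorem~\ref{Main}, Eberlein is equivalent to $M$ being one-based for stable independence, while the WAP condition is equivalent (as recalled in Subsection~\ref{The WAP compactification...}) to $M$ being stable. Thus (1) translates to ``$M$ is stable and one-based for stable independence.'' The classical deep result in model theory alluded to in the introduction then closes the equivalence: an $\aleph_0$-categorical stable structure is $\aleph_0$-stable if and only if it is one-based.

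For $(2) \Rightarrow (3)$, I would invoke that every $\aleph_0$-stable theory admits prime models over any countable parameter set; when combined with $\aleph_0$-categoricity this yields that $M$ itself is prime over each of its countable subsets. Consequently, for any $x \in E(M)$ the elementary bijection $x^{-1} \colon x(M) \to M$ extends to some $y \in E(M)$, giving $y \circ x = \mathrm{id}_M$. Hence $\mathrm{id}_M$ lies in every $E(M) \cdot x$, and (3) holds.

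The main obstacle will be $(3) \Rightarrow (2)$, where a semigroup-theoretic condition on $E(M)$ must be translated back into a model-theoretic property. My approach will be to use that having $p \in \bigcap_{x \in E(M)} E(M) \cdot x$ forces the principal left ideal $E(M) \cdot p$ to be contained in every $E(M) \cdot x$; analyzing this yields an extension property for partial elementary maps between countable elementary submodels of $M$, which in turn gives that $M$ is prime over every such submodel, hence $\aleph_0$-stable. For the ``moreover'' clause, once (2) is granted, it is a classical result that $\aleph_0$-categorical $\aleph_0$-stable theories have finite Morley rank, so $S_u(M)$ decomposes into finitely many $G$-orbits for each finite tuple of variables $u$, which is the oligomorphicity of the $G$-action on $S_u(M)$.
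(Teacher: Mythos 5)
Your treatment of $(1)\Leftrightarrow(2)$ is exactly the paper's: decompose \emph{strongly Eberlein} into \emph{Eberlein} plus \emph{WAP group}, apply Theorem~\ref{Main} and the stability characterization, and close with Zilber's theorem that $\aleph_0$-categorical stable structures are one-based iff $\aleph_0$-stable. The problem is $(2)\Rightarrow(3)$, where your argument contains a genuine error: $\mathrm{id}_M$ is essentially never in $\bigcap_{x\in E(M)}E(M)\cdot x$. Having $z\circ x=\mathrm{id}_M$ forces $z|_{x(M)}=x^{-1}$, which already maps $x(M)$ \emph{onto} $M$; since $z$ must be injective, there is no room left for $z$ on $M\setminus x(M)$, so no left inverse exists when $x$ is not surjective. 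Concretely, for $M$ a pure countable set (so $G=S_\infty$, which is $\aleph_0$-stable and strongly Eberlein), $\mathrm{id}_M\notin E(M)\cdot x$ for any non-surjective injection $x$. The prime-model reasoning does not rescue this: the prime model over the elementary submodel $x(M)$ is $x(M)$ itself, and $M$ need not be atomic over $x(M)$ (in the pure set, the unique non-realized $1$-type over a coinfinite submodel is not isolated), so $x^{-1}$ need not extend to an elementary self-embedding of $M$.

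The paper's witness is of the opposite nature: one takes $y\in E(M)$ whose image $M'=y(M)$ is a \emph{small} copy of $M$, in the sense that every type over $M'$ in countably many variables is realized in $M$; the paper shows that $y\in\bigcap_x E(M)\cdot x$ holds precisely for such $y$, and that such an $M'\preceq M$ exists iff $M$ is $\aleph_0$-stable (one builds a countable model realizing the countably many types over $M'$ and invokes $\aleph_0$-categoricity). This equivalence also handles $(3)\Rightarrow(2)$, where your sketch (extension properties yielding primeness, hence $\aleph_0$-stability) is too vague to assess and relies on a characterization of $\aleph_0$-stability by prime models that is not standard. Finally, for the ``moreover'' clause, ``finite Morley rank, so $S_u(M)$ has finitely many $G$-orbits'' is a non sequitur as stated; the paper instead maps indiscernible sequences onto their limit types and uses one-basedness (Morley sequences over $\acl(a_0)$ plus stationarity) to see that such a sequence is determined by $\tp(a_0a_1)$, of which there are finitely many by $\aleph_0$-categoricity.
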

\begin{proof}
$(1)\Leftrightarrow (2)$: As mentioned before, $G$ is a WAP group if and only if $M$ is stable. By the previous theorem, $G$ is strongly Eberlein if and only if $M$ is stable and one-based. A classical result of Zilber (see Theorem~5.12 in \cite[Ch.\ 2]{pil96}, and also \cite{bbhSFB14}, Proposition 3.12) states that an $\aleph_0$-categorical stable structure is one-based if and only if it is $\aleph_0$-stable.

$(2)\Leftrightarrow (3)$: We will argue that
\begin{equation} \label{eq:y in cap Ex}
y\in\bigcap_{x\in E(M)}E(M)\cdot x
\end{equation}
if and only if every type over the image $M'=y(M)$ (possibly in countably many variables) is realized in~$M$. In turn, there exists a submodel $M'$ of $M$ with this property if and only if $M$ is $\aleph_0$-stable.

If every such type is realized and $x$ is any element in $E(M)$, there must be a countable subset $Z\subset M$ such that, with the appropriate orderings, $Mx(M)\equiv Zy(M)$; this induces an element $z\in E(M)$ satisfying $y=zx$. Thus, \eqref{eq:y in cap Ex} holds. Conversely, if $p$ is a type over $M'$, then there is a realization $a$ of $p$ in a countable elementary extension $N$ of $M$; by $\aleph_0$-categoricity, there is an isomorphism $j\colon N\to M$. Let $x\in E(M)$ be the composition $x=jy$, so in particular $x(M)=j(M')$. If \eqref{eq:y in cap Ex} holds, then there is $z\in E(M)$ such that $y=zx$. Thus, $aM'\equiv j(a)x(M)\equiv zj(a)zx(M)=zj(a)M'$, i.e., $zj(a)$ realizes $p$ in~$M$.

For the ``moreover'' part of the statement, it suffices to show that $S_u(M)/G$ is finite. We sketch the (standard) argument. To every indiscernible sequence $(a_i)_{i<\omega}\subset M^{|u|}$ we assign its limit type $p\in S_u(M)$. This is a surjective $G$-map. Since $M$ is one-based, the type of an indiscernible sequence $(a_i)_{i<\omega}$ is determined by $\tp(a_0a_1)$; indeed, one-basedness implies $(a_i)_{i>0}$ is \emph{Morley over $\acl(a_0)$} (see \cite{kimSim}, Fact~6.1.2), and then the claim follows by stationarity. By $\aleph_0$-categoricity, there are only finitely many types $\tp(a_0a_1)$.
\end{proof}

\begin{example}
As mentioned before, the group $S_\infty$ of permutations of a countable set $X$ is (strongly) Eberlein; its Roelcke compactification is the semigroup of partial bijections of $X$ \cite{glamegNew}. We can give some new examples. Consider the following oligomorphic groups:
\begin{enumerate}
\item the automorphism group of a dense linear order, $\Aut(\mbQ,<)$;
\item the homeomorphism group of the Cantor space (or, equivalently, the automorphism group of its algebra of clopen sets), $\Homeo(2^\omega)$;
\item the automorphism group of the random graph.
\end{enumerate}
It follows from the results in \cite[\textsection 6]{bentsa} (see also \cite[\textsection 4.2]{iba14}) that for each of these groups (as well as for $S_\infty$) the algebra $\WAP(G)$ is generated by the functions of the form 
\begin{equation}\label{g mapsto a=gb}
g\mapsto (a=gb)
\end{equation}
for elements $a,b$ in the respective structures. Since these are obviously in $\Hilb(G)$, we deduce that these groups are Eberlein. In fact, for any $G=\Aut(M)$ from the above list, we can deduce the stronger result that $W(G)=P'(M)$, where $P'(M)$ denotes the semigroup of partial elementary maps $M\to M$ with relatively algebraically closed domain. (In particular, as is well-known, these structures have weak elimination of imaginaries; cf.\ Remark~\ref{rem:weak elim imag}.) Indeed, since $\WAP(G)$ is generated by the functions \eqref{g mapsto a=gb}, an element $[x,y]\in W(G)$ is determined by the values of $x(a)=y(b)$ for $a,b\in M$; hence, the canonical map $[x,y]\in W(G)\mapsto x^{-1}\circ y\in P'(M)$ is an isomorphism.
\end{example}

\begin{example}
  \label{ex:Hrushovski}
A famous conjecture of Zilber claimed that an $\aleph_0$-categorical stable structure should be $\aleph_0$-stable (equivalently, one-based, or still: not encoding a \emph{pseudoplane}). This was refuted by Hrushovski, who constructed an $\aleph_0$-categorical stable pseudoplane. The details of the construction can be found in \cite{wagnerRelational}. It follows from Theorem \ref{Main} that the automorphism group of this pseudoplane is an oligomorphic WAP group that is not Eberlein. This answers Question~6.10 in \cite{glamegSurvey}.
\end{example}

\begin{example}
The previous example can be used to produce a countable compact dynamical system of finite Cantor--Bendixson rank that is faithfully representable on a reflexive Banach space, but not on a Hilbert space, in the sense of representability defined in the introduction (see \cite{megHilb} for more background). Indeed, let $M$ be Hrushovski's stable pseudoplane, $G=\Aut(M)$, and choose some formula $\varphi(u,v)$ and parameters $a,b$ such that $f\colon g\mapsto\varphi(a,gb)$ is not in $\Hilb(G)$ (recall the discussion at the beginning of Subsection~\ref{Description of H(G)}). Now, the space $S_\varphi(M)$ of $\varphi$-types in the variable $v$, with parameters from $M$, induces a compactification $X$ of $G$ via the map $g\mapsto\tp_\varphi(gb/M)$. Since $f$ belongs to the associated algebra, the dynamical system $G\actson X$ is not Hilbert-representable; but it is reflexively representable, since $\varphi$ is stable. Finally, as is well-known, the space of local types $S_\varphi(M)$ of a stable formula over a countable structure is a countable compact zero-dimensional space of finite Cantor--Bendixson rank (see, for instance, \cite{pil96}, Remark 2.3 and Lemma 3.1).
\end{example}

\enlargethispage{\baselineskip}

\vspace*{-8pt}
\noindent\hrulefill

\section{Hilbert-representable factors}\label{section factors of H(G)}

In this section we extend our analysis to the factors of $H(G)$ and $W(G)$. We start by showing that all factors of $H(G)$ are zero-dimensional.

We recall that if $\pi\colon G\to U(\mcH)$ is a continuous unitary representation, then $\pi$ extends naturally to a continuous homomorphism $\pi\colon H(G)\to\Theta(\mcH)$. Indeed, the closure of $\pi(G)$ in $\Theta(\mcH)$ is a Hilbert-representable semitopological semigroup compactification of $G$, and thus a $G$-factor of $H(G)$ as per Fact~\ref{fact:universality of H(G)}.

\begin{lem}\label{l:ctble-image}
Let $\pi\colon G\to U(\mcH)$ be a continuous unitary representation of a Roelcke precompact Polish group. Let $\eta \in \mcH$ be a vector such that $\pi(V)\eta = \eta$ for some open subgroup $V \leq G$ (i.e., $\pi(v)\eta=\eta$ for all $v\in V$). Then $\pi(H(G))\eta$ is countable.
\end{lem}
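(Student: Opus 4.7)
The plan is to use Roelcke precompactness to force the pairwise inner products of elements of $\pi(H(G))\eta$ to lie in a finite set, and separability of the cyclic subspace through $\eta$ to turn this uniform discreteness into countability.

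First I would note that, as $V$ is an open neighbourhood of the identity in a Roelcke precompact group, there is a finite set $F\subset G$ with $VFV=G$, so the double coset space $V\backslash G/V$ is finite. The diagonal matrix coefficient $\psi(g):=\langle\pi(g)\eta,\eta\rangle$ lies in $B(G)\subset\Hilb(G)$ and, since $\eta$ is fixed by $V$, is bi-$V$-invariant; it therefore factors through $V\backslash G/V$ and takes only finitely many values. As $\psi\in\Hilb(G)$, it extends continuously to $\tilde\psi\colon H(G)\to\mbC$, whose image is contained in the closure of $\psi(G)$, which is that same finite set.

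Next, I would invoke that $\pi$ extends to a continuous $*$-homomorphism $\pi\colon H(G)\to\ThH$---this follows from Proposition~\ref{Hilb rep is *-closed} (so $H(G)$ is $*$-closed) together with the continuity of the adjoint in the weak operator topology---to obtain, for all $p,q\in H(G)$,
\begin{equation*}
\langle\pi(p)\eta,\pi(q)\eta\rangle=\langle\pi(q^*p)\eta,\eta\rangle=\tilde\psi(q^*p).
\end{equation*}
So the pairwise inner products, and hence the pairwise distances, among elements of $\pi(H(G))\eta$ take only finitely many values.

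Finally, I would observe that the closed cyclic subspace $\mcH_0:=\overline{\mathrm{span}\,\pi(G)\eta}$ is separable (the continuous image of the separable Polish group $G$ in $\mcH$ is separable, and so is its closed linear span) and is $\pi(H(G))$-invariant: if $g_\alpha\to p$ in $H(G)$ and $v\in\mcH_0$, then $\pi(g_\alpha)v\to\pi(p)v$ weakly, and closed subspaces are weakly closed. Hence $\pi(H(G))\eta\subset\mcH_0$; any two distinct elements of it are at distance at least some $d_{\min}>0$, namely the smallest positive value among the finitely many pairwise distances that occur, so $\pi(H(G))\eta$ is a uniformly discrete subset of the separable Hilbert space $\mcH_0$ and therefore countable. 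I do not foresee a real obstacle; the only delicate point is the $*$-homomorphism property of the extension of $\pi$ to $H(G)$, which reduces to the continuity of the adjoint in the weak operator topology and to $H(G)$ being $*$-closed.
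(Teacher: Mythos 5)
Your proposal is correct and follows essentially the same route as the paper: finiteness of $V\backslash G/V$ forces the diagonal matrix coefficient, and hence all pairwise inner products of $\pi(H(G))\eta$, into a finite set, so the orbit is uniformly discrete in the separable cyclic subspace and therefore countable. The only (harmless) cosmetic difference is that you pass through the $*$-homomorphism identity $\langle\pi(p)\eta,\pi(q)\eta\rangle=\tilde\psi(q^*p)$, whereas the paper gets the same finiteness directly by density of $G\times G$ in $H(G)\times H(G)$ and separate continuity.
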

\begin{proof}
  First, we may restrict our attention to the separable Hilbert space generated by $\pi(G)\eta$, which we still denote by $\mcH$. As $G$ is Roelcke precompact and $V$ is open, the set of double cosets $V\backslash G/V$ is finite. Since $\eta$ is fixed by $V$, the function $g\mapsto\langle\eta,\pi(g)\eta\rangle$ factors through $V\backslash G/V$, hence the set $$\{\langle\pi(g_1)\eta, \pi(g_2)\eta\rangle : g_1, g_2 \in G\}$$ is finite. By continuity, $\{\langle\pi(p_1)\eta, \pi(p_2)\eta\rangle: p_1, p_2 \in H(G)\}$ is equal to it, and therefore also finite. So
\begin{equation*}
  \inf \{\| \pi(p_1)\eta - \pi(p_2)\eta \| : p_1, p_2 \in H(G), \pi(p_1)\eta \neq \pi(p_2) \eta\} > 0
\end{equation*}
  and the separability of $\mcH$ implies that $\pi(H(G))\eta$ is countable.
\end{proof}

\begin{prop}\label{p:f-ctble-image}
Let $G$ be a Roelcke precompact Polish group and let $f \in C(H(G))$ be a function such that $Vf = f$ for some open subgroup $V \leq G$. Then $f(H(G))$ is countable.
\end{prop}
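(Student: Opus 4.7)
The plan is to reduce the proposition to Lemma \ref{l:ctble-image} by exhibiting a single continuous unitary representation $\pi\colon G\to U(\mcH)$ and a $V$-fixed vector $\xi\in\mcH^V$ such that the value $f(p)$ depends only on $\pi(p^*)\xi$; combined with the $*$-closure of $H(G)$, this makes $f(H(G))$ countable.

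Since $C(H(G))=\Hilb(G)=\overline{B(G)}$, approximate $f$ uniformly by matrix coefficients $f_n(g)=\langle\xi_n,\pi_n(g)\eta_n\rangle$, where $\pi_n\colon G\to U(\mcH_n)$ is a continuous unitary representation. Let $\xi_n'=P_{\mcH_n^V}\xi_n$ be the orthogonal projection of $\xi_n$ onto the subspace of $V$-fixed vectors and set $\tilde f_n(g)=\langle\xi_n',\pi_n(g)\eta_n\rangle$, which is a $V$-left-invariant matrix coefficient. To show $\tilde f_n\to f$ uniformly, invoke the Alaoglu--Birkhoff mean ergodic theorem: $\xi_n'$ is the strong limit of convex combinations $\sum_i c_i\pi_n(v_i)\xi_n$ with $v_i\in V$, and for any such combination and any $g\in G$,
\begin{equation*}
  \bigl|\bigl\langle \sum_i c_i \pi_n(v_i)\xi_n - \xi_n,\, \pi_n(g)\eta_n\bigr\rangle\bigr| \leq \sum_i c_i\,\|v_i f_n - f_n\|_\infty \leq 2\|f - f_n\|_\infty,
\end{equation*}
the last inequality using $Vf=f$. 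Passing to the strong limit yields $\|\tilde f_n-f_n\|_\infty\leq 2\|f-f_n\|_\infty$, so $\tilde f_n\to f$ uniformly.

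Now rescale so that $\|\xi_n'\|\leq 2^{-n}$ (replacing $(\xi_n',\eta_n)$ by $(t\xi_n',t^{-1}\eta_n)$ for suitable $t>0$ leaves $\tilde f_n$ unchanged) and set $\mcH=\bigoplus_n\mcH_n$, $\pi=\bigoplus_n\pi_n$ (a continuous unitary representation of $G$), and $\xi=(\xi_n')_n\in\mcH^V$. The extension $\pi\colon H(G)\to\Theta(\mcH)$ acts componentwise, giving $\pi(p)\xi=(\pi_n(p)\xi_n')_n$, so by Lemma \ref{l:ctble-image} the set $\pi(H(G))\xi$ is countable. By Proposition \ref{Hilb rep is *-closed}, $H(G)$ is $*$-closed; hence $\pi_n(p^*)=\pi_n(p)^*$ (by weak-operator continuity of the adjoint on $\Theta(\mcH_n)$) and $\{\pi(p^*)\xi:p\in H(G)\}=\pi(H(G))\xi$ is likewise countable. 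Each $\tilde f_n(p)=\langle\pi_n(p^*)\xi_n',\eta_n\rangle$ depends only on the $n$-th component of $\pi(p^*)\xi$, so $f(p)=\lim_n\tilde f_n(p)$ is determined by $\pi(p^*)\xi$ itself, and therefore $f(H(G))$ is countable.

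The main obstacle is the uniform convergence $\tilde f_n\to f$: without it, the $V$-invariant approximations $\tilde f_n$ cannot be combined into a single factorization through a countable-image map. The Alaoglu--Birkhoff theorem is the crucial bridge, transforming the approximate $V$-invariance of $f_n$ (controlled by $2\|f-f_n\|_\infty$ thanks to $Vf=f$) into genuine $V$-invariance after projecting $\xi_n$ onto $\mcH_n^V$, while preserving the uniform limit.
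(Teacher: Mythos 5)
Your proof is correct and follows essentially the same route as the paper's: approximate $f$ by matrix coefficients, replace the left vectors by $V$-fixed ones at a cost of $2\|f-f_n\|_\infty$ (the paper takes the minimal-norm element of $\overline{\mathrm{co}}(\pi(V)\xi_n)$, which is exactly your Alaoglu--Birkhoff projection onto $\mcH_n^V$), assemble a single representation with a single $V$-fixed vector, and invoke Lemma~\ref{l:ctble-image} to see that $f$ factors through the countable set $\pi(H(G))\xi$. The only cosmetic difference is that you keep the summable vector $(\xi_n')_n$ in a direct sum, whereas the paper rescales and duplicates the representation so that all $\xi_n$ become a single $\xi$.
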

\begin{proof}
Let $f = \lim_n f_n$, where $f_n(g) = \langle\xi_n, \pi(g)\eta_n\rangle$ for some representation $\pi$ and vectors $\xi_n, \eta_n$ (recall that every function in $\Hilb(G)$ is a limit of matrix coefficients and we can assume that they are from the same representation simply by taking direct sums). First, we may assume that each $\xi_n$ is fixed by $\pi(V)$. Indeed, let $n$ be such that $\|f - f_n\| \leq \epsilon$ and let $\xi_n'$ be the element of minimal norm of $\overline{\text{co}}(\pi(V)\xi_n)$. Note that $\xi_n'$ is fixed by $\pi(V)$ and for every $g \in G$ and $v \in V$,
  \begin{equation*}
    |\langle\xi_n, \pi(g)\eta_n\rangle - \langle\pi(v)\xi_n, \pi(g)\eta_n\rangle| =
    |f_n(g) - vf_n(g)| \leq 2\epsilon,
  \end{equation*}
implying that
  \begin{equation*}
    |\langle\xi_n, \pi(g)\eta_n\rangle - \langle\xi_n', \pi(g)\eta_n\rangle| \leq 2\epsilon
  \end{equation*}
and thus we can replace $\xi_n$ by $\xi_n'$ without losing much.

Next, by replacing $\pi$ with a sum of infinitely many copies of itself and rescaling if necessary, we may assume that $\xi_n = \xi$ for all $n$. Finally, apply Lemma~\ref{l:ctble-image} to obtain that $\pi(H(G))\xi$ is countable and let $E$ be the equivalence relation on $H(G)$ given by $p \mathrel{E} q \iff \pi(p^*) \xi = \pi(q^*) \xi$ (so that $E$ has countably many classes). Now all $f_n$ and $f$ factor through $E$, so, in particular, the image of $f$ is countable.
\end{proof}

\begin{lem}\label{dense subalgebras}
Suppose $G$ is pro-oligomorphic and let $A\subset\WAP(G)$ be a closed subalgebra. Let $A_0\subset A$ be the subalgebra of functions $f$ such that $Vf=f$ for some open subgroup $V\leq G$, and let $A_1\subset A$ be the subalgebra of functions with finite range.
\begin{enumerate}
\item\label{dense Vf=f} If $A$ is left-invariant, then $A_0$ is dense in $A$.
\item\label{dense finite range} If $A$ is bi-invariant, then $A_1$ is dense in $A$.
\end{enumerate}
\end{lem}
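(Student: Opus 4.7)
The plan is to prove both parts by averaging, using the Ryll--Nardzewski fixed point theorem together with the weak precompactness of orbits of WAP functions.

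For (\ref{dense Vf=f}), fix $f \in A$ and $\epsilon > 0$. Since $f \in \WAP(G) \subset \RUC(G)$, the orbit map $g \mapsto gf$ is norm-continuous, and since $G$ is non-archimedean there exists an open subgroup $V \leq G$ with $\|vf - f\| \leq \epsilon$ for every $v \in V$. Consider $K = \cl[w]{\mathrm{co}(Vf)}$, the weak closure in $\CG$ of the convex hull of $Vf$. The orbit $Vf$ is weakly relatively compact by Definition~\ref{def:wap}(1), so the Krein--Smulian theorem makes $K$ weakly compact. Moreover, since $A$ is convex, left-invariant and norm-closed---hence also weakly closed---we have $K \subset A$. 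The group $V$ acts on $K$ by affine isometries, so the Ryll--Nardzewski fixed point theorem produces $f' \in K$ fixed by $V$, that is, $f' \in A_0$. The norm estimate $\|f' - f\| \leq \epsilon$ is automatic, since the set $\{h : \|h - f\| \leq \epsilon\}$ is convex and norm-closed (hence weakly closed) and contains $Vf$, and thus contains~$K$.

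For (\ref{dense finite range}), we bootstrap from (\ref{dense Vf=f}) by averaging on both sides. Given $f \in A$ and $\epsilon > 0$, apply (\ref{dense Vf=f}) first to obtain $f_1 \in A_0$ with $\|f_1 - f\| < \epsilon/2$ and $V f_1 = f_1$ for some open subgroup $V$. Since $f_1 \in \UC(G)$, there is an open subgroup $W \leq G$ with $\|f_1 w - f_1\| \leq \epsilon/2$ for every $w \in W$. Repeat the argument of the previous paragraph with the isometric right action of $W$ on $\cl[w]{\mathrm{co}(f_1 W)} \subset A$ (now using that $A$ is bi-invariant) to obtain $f_2 \in A$ with $f_2 W = f_2$ and $\|f_2 - f_1\| \leq \epsilon/2$. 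Left $V$-invariance is preserved: every convex combination of right translates of $f_1$ is still left $V$-invariant, and left translation is weakly continuous, so $V f_2 = f_2$ as well. Therefore $f_2$ is bi-invariant under the open subgroup $V \cap W$, and factors through the set of double cosets $(V \cap W) \backslash G / (V \cap W)$, which is finite by Roelcke precompactness. Hence $f_2 \in A_1$ and $\|f_2 - f\| < \epsilon$.

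The main technical input is Ryll--Nardzewski, whose non-contractive hypothesis is automatic because the actions are by isometries; the only mild subtlety is verifying that in (\ref{dense finite range}) the second averaging preserves the left invariance produced by the first, which follows from commutativity of left and right translations together with weak continuity of the left action.
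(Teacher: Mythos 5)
Your argument is correct, and it is essentially the argument the paper delegates to \cite{bentsa} (Proposition~4.7 and Theorem~4.8): approximate $f$ by a fixed point of the isometric action of a small open subgroup on the weakly compact convex hull of the orbit, then repeat on the right and use Roelcke precompactness to conclude that a function bi-invariant under an open subgroup factors through a finite double coset space. The only cosmetic difference is that you invoke Ryll--Nardzewski where the cited proofs obtain the fixed point via the Chebyshev centre of a bounded set in a reflexive space; both devices apply here, and your verifications (Krein--\v{S}mulian, weak closedness of the norm-closed convex sets involved, and preservation of left $V$-invariance under the second averaging) are all in order.
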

\begin{proof}
This follows almost verbatim from the proofs of Proposition 4.7 and Theorem 4.8 in \cite{bentsa}.
\end{proof}

\begin{theorem} \label{th:zero-dim-factor} If $G$ is a pro-oligomorphic group, then every factor of $H(G)$ is zero-dimensional.\end{theorem}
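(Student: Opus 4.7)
The plan is to pass to the Gelfand dual and show that a factor $Y$ of $H(G)$ has continuous functions that are densely approximable by step functions on clopen partitions, which forces $Y$ to be zero-dimensional. Concretely, $Y$ corresponds to a closed, $G$-invariant subalgebra $A \subset \Hilb(G) = C(H(G))$ with $A \cong C(Y)$. To show $Y$ is zero-dimensional, it suffices to show that the idempotents of $A$ (i.e., characteristic functions of clopen subsets of $Y$) separate points of $Y$, or equivalently that the subalgebra of finite-range functions is dense in $A$.

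First, by Lemma~\ref{dense subalgebras}\eqref{dense Vf=f}, the subalgebra $A_0 \subset A$ consisting of $f$ satisfying $Vf = f$ for some open subgroup $V \leq G$ is dense in $A$. So it is enough to uniformly approximate each $f \in A_0$ by finite-range functions belonging to $C(Y)$. Fix such an $f$. By Proposition~\ref{p:f-ctble-image}, the image $f(H(G))$ is countable, hence so is the image $f(Y)$ (as $f$ factors through the factor map $H(G) \to Y$). Since $Y$ is compact, $f(Y)$ is a countable compact Hausdorff subset of $\mathbb{C}$, which is necessarily scattered and in particular zero-dimensional.

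Given $\epsilon > 0$, partition $f(Y)$ into finitely many relatively clopen pieces $K_1, \ldots, K_n$ each of diameter less than $\epsilon$; the preimages $U_i = f^{-1}(K_i) \subset Y$ are then clopen, pairwise disjoint, and cover $Y$. Picking $c_i \in K_i$ and setting $\tilde f = \sum_i c_i \chi_{U_i}$, we have $\tilde f \in C(Y) = A$ with $\|f - \tilde f\|_\infty < \epsilon$. Thus every $f \in A_0$ is a uniform limit of linear combinations of idempotents of $A$, and since $A_0$ is dense in $A$, the algebra generated by idempotents is dense in $A$. It follows that clopen subsets of $Y$ separate points, so $Y$ is zero-dimensional. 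The only nontrivial input is the countability of the image given by Proposition~\ref{p:f-ctble-image}; the rest is a routine application of Gelfand duality and the fact that countable compact Hausdorff spaces are zero-dimensional.
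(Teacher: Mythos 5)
Your proposal is correct and follows essentially the same route as the paper: density of the $V$-fixed functions via Lemma~\ref{dense subalgebras}(\ref{dense Vf=f}), countability of their ranges via Proposition~\ref{p:f-ctble-image}, and then the (standard) deduction of zero-dimensionality from having a separating family of countable-range continuous functions. The only difference is that you spell out this last step (scatteredness of countable compact subsets of $\mbC$ and the resulting clopen partitions), which the paper leaves implicit.
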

\begin{proof}
  Let $S$ be a factor of $H(G)$. Since $\mcA(S)$ is left-invariant, by Lemma~\ref{dense subalgebras} (\ref{dense Vf=f}), the subalgebra of functions $f \in \mcA(S)$ that are fixed by some open subgroup $V \leq G$ is dense in $\mcA(S)$. By Proposition~\ref{p:f-ctble-image}, those functions have countable range, and, by density, they separate points in $S$. This implies the conclusion of the theorem.
\end{proof}

\begin{question} Is the same true for all factors of $W(G)$?\end{question}

The automorphism group of the dense, countable circular order acts minimally on the circle and this dynamical system is a quotient of the Roelcke compactification of the group. So certainly some hypothesis is necessary to obtain zero-dimensionality.

The previous theorem, restated as follows, is useful to show that Hilbert-representability is preserved under factors.

\begin{cor}\label{cor:A_1 dense} Let $A\subset\Hilb(G)$ be a left-invariant closed subalgebra, and let $A_1\subset A$ be the subalgebra of functions with finite range. Then $A_1$ is dense in $A$.\end{cor}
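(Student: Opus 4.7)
The plan is to recognize Corollary~\ref{cor:A_1 dense} as the Gelfand-dual restatement of Theorem~\ref{th:zero-dim-factor}, so that nothing new is required beyond an elementary approximation lemma for zero-dimensional compacta.

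First, I would move to the Gelfand side. Let $X = \hat A$ and identify $A$ with $C(X)$; since $A$ is left-invariant, $X$ carries a continuous $G$-action and a natural compactification map $\alpha \colon G \to X$. The hypothesis $A \subset \Hilb(G) = \mcA(H(G))$ makes $X$ a $G$-factor of $H(G)$, so Theorem~\ref{th:zero-dim-factor} applies and $X$ is zero-dimensional.

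Next, I would verify that $A_1$ corresponds, under the identification $A = C(X)$, to continuous functions $X \to \mathbb{C}$ with finite range, i.e., to locally constant functions on $X$ (a continuous function on a compact Hausdorff space has finite range iff it is locally constant, since each level set is then clopen). The corollary thus reduces to the standard fact that locally constant continuous functions are uniformly dense in $C(X)$ whenever $X$ is a compact zero-dimensional Hausdorff space: given $F \in C(X)$ and $\varepsilon > 0$, cover $F(X)$ by finitely many open balls of radius $\varepsilon$, pull back to an open cover of $X$, refine it (using zero-dimensionality and compactness) to a finite clopen partition $\{U_1,\dots,U_n\}$, and approximate $F$ by $\sum_i c_i \chi_{U_i}$ with $c_i \in F(U_i)$.

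There is no real obstacle; the whole content sits in Theorem~\ref{th:zero-dim-factor}, and the corollary is merely its Gelfand-dual reformulation. The only point to be slightly careful about is the passage between \emph{finite range on $G$} and \emph{finite range on $X$}, but this is immediate from density of $\alpha(G)$ in $X$ together with the closedness of any finite subset of $\mathbb{C}$.
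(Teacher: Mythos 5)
Your proposal is correct and follows the paper's proof essentially verbatim: both pass to the Gelfand space, invoke Theorem~\ref{th:zero-dim-factor} to get zero-dimensionality, and then conclude density of finite-range functions by a standard argument (the paper cites Stone--Weierstrass after noting such functions separate points, while you give the equivalent explicit clopen-partition approximation). The minor points you flag --- finite range versus locally constant, and transferring finite range from $G$ to $X$ via density of $\alpha(G)$ --- are handled correctly.
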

\begin{proof}
  Let $S$ be the factor of $H(G)$ corresponding to $A$, so that $A = \mcA(S)\cong C(S)$. By Theorem~\ref{th:zero-dim-factor}, $S$ is zero-dimensional, so functions in $C(S)$ with finite range separate points in $S$. By Stone--Weierstrass, they are dense in $C(S)$.
\end{proof}

\begin{prop}\label{finite range are in B(G)}
Let $G$ be a pro-oligomorphic group. If $f\in\Hilb(G)$ has finite range, then $f\in B(G)$.
\end{prop}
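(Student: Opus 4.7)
The plan is to exploit the explicit realization $H(G)=P(M)$ from Theorem~\ref{H(G)=P(M)}, which embeds the Hilbert compactification as a closed subspace of $K^{M^{\eq}}$ (with the product topology), where $K=M^{\eq}\cup\{\infty\}$ is the one-point compactification of the discrete set $M^{\eq}$. In this picture $H(G)$ is visibly zero-dimensional: its basic clopen sets are finite intersections of ``coordinate'' clopens $[p(a)\in W]:=\{p\in P(M):p(a)\in W\}$, where $a\in M^{\eq}$ and $W$ is a clopen subset of $K$, i.e.\ either a finite subset of $M^{\eq}$ or the complement of one in $K$.

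First I would lift $f$ to a continuous function $\tilde f\in C(H(G))$ via the identification $\Hilb(G)\cong C(H(G))$. Density of $G$ in $H(G)$ forces the range of $\tilde f$ to agree with the finite range $\{c_1,\dots,c_k\}$ of $f$, so the level sets $U_i:=\tilde f^{-1}(c_i)$ partition $H(G)$ into finitely many clopen pieces and
\[
  f=\sum_{i=1}^k c_i\cdot 1_{U_i\cap G}.
\]
Since $B(G)$ is a linear subspace of $\CG$, it is enough to prove the following: for every clopen $U\subseteq H(G)$, the function $1_{U\cap G}$ lies in $B(G)$.

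To handle this, I would use compactness to write each such $U$ as a finite union of basic clopens; then $1_{U\cap G}$ becomes a finite Boolean combination of indicators of the ``subbasic'' sets $\{g\in G:g(a)\in W\}$ for $a\in M^{\eq}$ and $W$ clopen in $K$. Because every $g\in G$ is a totally defined automorphism of $M^{\eq}$ (so $g(a)\in M^{\eq}$, never $\infty$), these restrict to finite disjunctions, or complements of finite disjunctions, of atomic indicators $1_{\{g:\,g(a)=b\}}$ with $a,b\in M^{\eq}$. Each atomic indicator already lies in $B(G)$: it equals the matrix coefficient $g\mapsto\langle e_b,\pi(g)e_a\rangle$ of the natural unitary representation $\pi\colon G\to U(\ell^2(M^{\eq}))$ given by $\pi(g)e_c=e_{gc}$ (this is also a special case of the lemma opening this subsection, applied to the equality formula on a sort of $M^{\eq}$). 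As $B(G)$ is a unital $*$-algebra, it absorbs Boolean combinations of such indicators, giving $1_{U\cap G}\in B(G)$ as required, and hence $f\in B(G)$.

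There is no real obstacle in this argument; the only point needing a moment's care is the presence of the $\infty$ coordinate in $K$, which corresponds to a partial map $p\in P(M)$ being undefined somewhere. On restriction to $G$ this is harmless, since total automorphisms never take the value $\infty$, so each clopen condition automatically collapses to a Boolean statement about finitely many candidate values of $g(a)$.
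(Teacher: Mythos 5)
Your proof is correct, but it takes a genuinely different route from the paper's. The paper first reduces to $\{0,1\}$-valued $f$ (writing a general finite-range $f$ as a linear combination of indicators $\mathbf{1}_{A_i}=F_i\circ f\in\Hilb(G)$), and then invokes Proposition~\ref{Hilb by formulas} to approximate $f$ in norm within $1/2$ by a linear combination of $\{0,1\}$-valued matrix coefficients $m_0,\dots,m_{n-1}$ coming from definable equivalence relations; the $1/2$ bound forces $f$ to be an \emph{exact} Boolean function of the tuple $(m_i)_{i<n}$, and $B(G)$ is closed under Boolean combinations of $\{0,1\}$-valued elements ($\neg m=1-m$, $m\wedge m'=mm'$). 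You instead work in the concrete model $H(G)=P(M)\subset K^K$ of Theorem~\ref{H(G)=P(M)}: zero-dimensionality plus compactness lets you write each level set of the extension $\tilde f$ as a finite union of cylinder clopens, whose traces on $G$ are Boolean combinations of the atomic indicators $\mathbf{1}_{\{g:\,g(a)=b\}}$, which are matrix coefficients of $\ell^2(M^\eq)$. Both arguments ultimately rest on the classification theorem (yours through Theorem~\ref{H(G)=P(M)}, the paper's through Proposition~\ref{Hilb by formulas}) and on the same closure properties of $B(G)$ under Boolean operations; your version is more explicit about which matrix coefficients occur and makes the zero-dimensionality of $H(G)$ (cf.\ Theorem~\ref{th:zero-dim-factor}) do the work, while the paper's approximation trick avoids any appeal to the topology of $P(M)$. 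The small points you flag do check out: the range of $\tilde f$ equals that of $f$ because a finite set is closed and $G$ is dense, and the coordinate $\infty$ never occurs on the image of $G$ since automorphisms are total.
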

\begin{proof}
Suppose first that $f$ is $\{0,1\}$-valued. By Proposition \ref{Hilb by formulas}, we know that $f$ can be approximated in norm by a linear combination of $\{0,1\}$-valued matrix coefficients $m_0,\dots,m_{n-1}\in B(G)$, say $$\big\|f-\sum_{i<n}\lambda_im_i\big\|<1/2.$$ Hence there is a Boolean function $b \colon \{0,1\}^n\to\{0,1\}$ such that $b\big( (m_i(g))_{i<n} \big) = f(g)$ for every $g\in G$. This implies that $f$ can be written as a Boolean combination of the matrix coefficients $m_i$. Now it is enough to note that, first, the negation of a $\{0,1\}$-valued function $m\in B(G)$ is again in $B(G)$, since we can write it as the difference $\neg m=1-m$, and, second, the conjunction of two $\{0,1\}$-valued functions $m_0,m_1\in B(G)$ is again in $B(G)$, since it is simply the product $m_0\wedge m_1=m_0m_1$. We conclude that $f$ is a matrix coefficient.

Finally, every $f\in\Hilb(G)$ with finite range is a linear combination of $\{0,1\}$-valued functions in $\Hilb(G)$. Indeed, if the range of $f$ is $\{\lambda_1, \ldots, \lambda_k\}$ and $A_i = f^{-1}(\{\lambda_i\})$, we have that $f = \sum_i \lambda_i \mathbf{1}_{A_i}$, and each $\mathbf{1}_{A_i}$ can be written as $\mathbf{1}_{A_i}=F_i\circ f$ for some continuous function $F_i\colon\mbC\to\mbC$. If $f\in\Hilb(G)$, then $F_i\circ f\in\Hilb(G)$ as well (both $f$ and $F_i\circ f$ factor through $H(G)$). Therefore $\mathbf{1}_{A_i}\in B(G)$ and $f\in B(G)$.
\end{proof}

We can finally give an answer to Question \ref{question factors of H(G)} for pro-oligomorphic groups.

\begin{theorem}\label{theorem factors of H(G)}
Let $G$ be a pro-oligomorphic group. Every factor of $H(G)$ is Hilbert-representable.
\end{theorem}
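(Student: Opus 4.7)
The plan is to piece together the preparatory results that have just been proved, so that the verification of the criterion for Hilbert-representability given by Proposition~\ref{Equiv Hilb-rep} becomes almost automatic.

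Let $S$ be a factor of $H(G)$, with associated algebra $A = \mcA(S) \subset \Hilb(G)$, which is a left-invariant closed subalgebra. First I would observe that $S$ is metrizable: indeed, by Proposition~\ref{Hilb by formulas} the algebra $\Hilb(G)$ is generated by the functions $\varphi_{a,b}$ as $\varphi$ ranges over the (countably many, up to $G$-action) definable equivalence relations on powers of $M$ and $a,b$ range over the countable $M^\eq$, so $\Hilb(G)$ is separable; then $H(G)$ is metrizable, and so is its factor $S$. This puts us inside the hypothesis of Proposition~\ref{Equiv Hilb-rep}.

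Next, let $A_1 \subset A$ denote the subalgebra of functions in $A$ with finite range. By Corollary~\ref{cor:A_1 dense}, applied to the left-invariant closed subalgebra $A \subset \Hilb(G)$, we have that $A_1$ is norm-dense in $A$. On the other hand, Proposition~\ref{finite range are in B(G)} tells us that every finite-range function in $\Hilb(G)$ is already a matrix coefficient, so $A_1 \subset B(G)$ and therefore $A_1 \subset A \cap B(G)$. Combining these two facts,
\begin{equation*}
A \;=\; \overline{A_1} \;\subset\; \overline{A \cap B(G)} \;\subset\; A,
\end{equation*}
so $A = \overline{A \cap B(G)}$.

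Finally, Proposition~\ref{Equiv Hilb-rep} applied to the metrizable compactification $G \to S$ gives the desired conclusion that $S$ is Hilbert-representable. There is really no obstacle beyond assembling the pieces: the serious work was done in establishing Theorem~\ref{th:zero-dim-factor} (whose consequence is Corollary~\ref{cor:A_1 dense}) and in the Boolean-combination argument of Proposition~\ref{finite range are in B(G)}. If anything needs a moment of care, it is the separability observation ensuring metrizability, which is necessary since Proposition~\ref{Equiv Hilb-rep} is only stated in the metrizable case.
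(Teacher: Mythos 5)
Your proof is correct and follows the same route as the paper: Corollary~\ref{cor:A_1 dense} plus Proposition~\ref{finite range are in B(G)} give $A = \overline{A \cap B(G)}$, and Proposition~\ref{Equiv Hilb-rep} concludes. The metrizability check you add is a sensible extra precaution, though the paper already notes that all factors of $R(G)$ (hence of $H(G)$) are metrizable for Roelcke precompact Polish groups.
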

\begin{proof}
  Let $A$ be a subalgebra of $\Hilb(G)$ and let $A_1\subset A$ be the subalgebra of functions with finite range. By Corollary~\ref{cor:A_1 dense} and Proposition \ref{finite range are in B(G)}, we have that $A_1$ is dense in $A$ and contained in $A\cap B(G)$. Hence $A=\overline{A\cap B(G)}$ and, by Proposition~\ref{Equiv Hilb-rep}, the factor of $H(G)$ corresponding to $A$ is Hilbert-representable.
\end{proof}

\begin{cor}
Every semitopological semigroup factor of $H(G)$ is $*$-closed and is an inverse semigroup.
\end{cor}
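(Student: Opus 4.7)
The plan is to chain together the main theorem just proved (every factor of $H(G)$ is Hilbert-representable) with the general facts about $*$-closed compactifications from Section~2.

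First, suppose $\pi\colon H(G)\to S$ is a semitopological semigroup factor. By Theorem~\ref{theorem factors of H(G)}, $S$ is Hilbert-representable. Then Proposition~\ref{Hilb rep is *-closed} immediately gives that $S$ is $*$-closed, which is the first half of the statement.

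For the second half, I would argue that the factor map $\pi$ automatically commutes with the involution on the two $*$-semigroup compactifications. Indeed, $\pi$ restricted to the dense image of $G$ is the canonical map between the two copies of $G$, and on $G$ both involutions restrict to group inversion; by continuity of $\pi$ and of $^*$ (on both sides) together with the density of $G$, we get $\pi(p^*)=\pi(p)^*$ for all $p\in H(G)$. Now by Theorem~\ref{H(G)=P(M)}, $H(G)=P(M)$ is an inverse semigroup, so in particular every $p\in H(G)$ satisfies $p=pp^*p$ (Fact~\ref{fact:Sp=Sqp etc}(4)). Applying $\pi$, every $q\in S$ satisfies $q=qq^*q$, so $S$ is regular. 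Finally, Fact~\ref{fact:Sp=Sqp etc}(5) (a compact semitopological $*$-semigroup with dense subgroup is an inverse semigroup whenever it is regular) concludes that $S$ is an inverse semigroup.

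There is no serious obstacle here: the entire argument is a bookkeeping exercise combining Theorem~\ref{theorem factors of H(G)}, Proposition~\ref{Hilb rep is *-closed}, Theorem~\ref{H(G)=P(M)}, and Fact~\ref{fact:Sp=Sqp etc}. The only mildly nontrivial point to verify cleanly is that $\pi$ intertwines the two involutions, and this is a one-line density-and-continuity argument once one has $*$-closedness on both ends.
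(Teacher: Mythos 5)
Your proposal is correct and follows essentially the same route as the paper: the first claim via Theorem~\ref{theorem factors of H(G)} and Proposition~\ref{Hilb rep is *-closed}, and the second by noting that the factor map preserves the involution, so the identity $p=pp^*p$ (valid in $H(G)=P(M)$) passes to $S$, making $S$ regular and hence an inverse semigroup by Fact~\ref{fact:Sp=Sqp etc}. The only difference is that you spell out the density-and-continuity argument for intertwining the involutions, which the paper leaves implicit.
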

\begin{proof}
The first claim follows from the previous theorem and Proposition \ref{Hilb rep is *-closed}. Now, any such $G$-factor $H(G)\to S$ must preserve the involution. It follows that $pp^*p=p$ for every $p\in S$, hence $S$ is an inverse semigroup.
\end{proof}

It turns out that the converse of the above corollary also holds. The following is a generalization of Theorem~\ref{Main}.
\begin{theorem}
Let $S$ be a semitopological $*$-semigroup compactification of a pro-oligomorphic group~$G$. The following are equivalent:
\begin{enumerate}
\item $S$ is an inverse semigroup;
\item the idempotents of $S$ commute;
\item $S$ is Hilbert-representable.
\end{enumerate}
\end{theorem}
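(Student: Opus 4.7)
The implications $(1)\Leftrightarrow(2)$ and $(3)\Rightarrow(1)$ follow from material already developed in the paper. $(1)\Rightarrow(2)$ is part of Fact~\ref{equivalence for inverse semigroups}. For $(3)\Rightarrow(1)$, Fact~\ref{fact:universality of H(G)} realises $S$ as a $G$-factor of $H(G)$, whence the corollary immediately preceding the theorem forces $S$ to be an inverse semigroup. For $(2)\Rightarrow(1)$, I would adapt the proof of $(2)\Rightarrow(1)$ in Theorem~\ref{Main} inside $S$: by Fact~\ref{factors of WAP}, $S$ is a factor of $W(G)$, so every $p\in S$ lifts to some $[x,y]\in W(G)$ and can be written as $\bar x^*\bar y$ via the embedding $E(M)\hookrightarrow W(G)$, $x\mapsto[1,x]$, where $\bar x,\bar y$ denote the corresponding images in $S$. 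The identity $\bar x^*\bar x=1_S$ (a descent of $[x,x]=[1,1]$ in $W(G)$, itself a consequence of the elementarity of $x\in E(M)$) together with commutativity of the idempotents $\bar x\bar x^*$ and $\bar y\bar y^*$ gives $pp^*p=p$, and Fact~\ref{fact:Sp=Sqp etc}(5) then yields $(1)$.

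The substantive implication is $(1)\Rightarrow(3)$. The plan is to show that $S$ is a $G$-factor of $H(G)$, and then invoke Theorem~\ref{theorem factors of H(G)}. Since $S$ and $H(G)$ are both semitopological semigroup factors of $W(G)$, this reduces to showing that the projection $\pi_S\colon W(G)\to S$ factors through $\pi_H\colon W(G)\to H(G)=P(M)$, equivalently, that $\pi_S$ is constant on the fibres of $\pi_H$. As $S$ is inverse, $\pi_S(p)=\pi_S(pp^*p)$ for every $p\in W(G)$, so it suffices to prove the key claim that the map $p\mapsto pp^*p$ on $W(G)$ is itself constant on the fibres of~$\pi_H$.

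To establish the key claim, I would use the explicit construction of $pp^*p$ from the proof of Lemma~\ref{idempotents and regular elements}(2): for $p=[x,y]$, one obtains $pp^*p=[x,w]$, where $w\in E(M)$ is produced from auxiliary choices satisfying $x\ind_y z$, $xy\equiv zy$, $x\ind_z w$, $xy\equiv zw$, and one verifies that $x\cap w=x\cap y=:C$. Given a second representative $q=[x',y']$ with $\pi_H(p)=\pi_H(q)$, i.e.~with $x^{-1}y=x'^{-1}y'$ as partial maps on $M^\eq$, one constructs the corresponding $w'$ and aims to show $[x,w]=[x',w']$ in $W(G)$. Using homogeneity of the $\aleph_0$-categorical structure $M^\eq$, one finds $g\in G$ identifying $C$ with the analogous set for $(x',y')$ in a manner compatible with the common partial map; after replacing $(x,w)$ by $g\cdot(x,w)$, stationarity of stable types (Fact~\ref{properties of ind,equiv}(5)) can be brought in to conclude $\tp^s(x,w)=\tp^s(x',w')$, and hence the desired equality. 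The principal technical obstacle is to match the stable types of the ``free parts'' of $x$ and $x'$ (and of $w$ and $w'$) over $C$ via a further homogeneity argument before stationarity can be applied.
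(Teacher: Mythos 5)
Your handling of $(1)\Leftrightarrow(2)$ and $(3)\Rightarrow(1)$ agrees with the paper, and your overall strategy for $(1)\Rightarrow(3)$ --- show that $\pi_S\colon W(G)\to S$ is constant on the fibres of $\pi_H\colon W(G)\to H(G)$, so that $S$ becomes a factor of $H(G)$ and Theorem~\ref{theorem factors of H(G)} applies --- is also the paper's. The gap is your key claim that $p\mapsto pp^*p$ is constant on the fibres of $\pi_H$: this is false in general. Since $H(G)$ is an inverse semigroup and $\pi_H$ is a $*$-homomorphism, every $p(p^*p)^n$ lies in the same fibre as $p$; applying your claim to the pair $\bigl(p,\,pp^*p\bigr)$ would therefore force $pp^*p=(pp^*p)(pp^*p)^*(pp^*p)=p(p^*p)^4$, i.e.\ $pp^*p$ would have to be regular for \emph{every} $p\in W(G)$. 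By Lemma~\ref{idempotents and regular elements}, writing $pp^*p=[x,w]$, that regularity is the independence $x\ind_{x\cap w}w$, and in the non-one-based case --- the only case in which anything remains to be proved, since otherwise $W(G)=H(G)$ already --- it fails: multiplying by $p^*p$ merely pushes the configuration one step further along a forking chain, so in pseudoplane-type structures the elements $p(p^*p)^n$ are pairwise distinct, none of them regular, all in the same $\pi_H$-fibre, and only their limit is regular. Your own sketch runs into exactly this wall: the stationarity step needs $x\ind_C w$ and $x'\ind_{C'}w'$, which is precisely the regularity of $pp^*p$ that is unavailable; this is not a technical obstacle but the reason the statement is wrong.

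The repair is the paper's construction. Let $P$ be the closed (commutative) subsemigroup of $W(G)$ generated by $p^*p$ and let $e_p\in P$ be the idempotent with $W(G)e_p$ minimal, so that $e_p=\lim(p^*p)^{n_i}$ and $e_p=p^*pe_p$ by Fact~\ref{fact:Sp=Sqp etc}(\ref{Sp=Sqp}). Setting $\sigma(p)=pe_p$, one checks that $\sigma(p)$ is genuinely regular, that $\pi_H(\sigma(p))=\pi_H(p)$ --- whence $\sigma$ \emph{is} constant on $\pi_H$-fibres, by the injectivity of $\pi_H$ on regular elements that you correctly extracted from the proof of Theorem~\ref{Main} --- and that $\pi_S(\sigma(p))=\lim\pi_S\bigl(p(p^*p)^{n_i}\bigr)=\pi_S(p)$ by separate continuity and the identity $qq^*q=q$ in $S$. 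These three facts complete your reduction; the point is that the unique regular element in the fibre of $p$ is reached only as a limit of the elements $p(p^*p)^n$, not at the first step. (The paper packages this via an auxiliary compactification $S_H$ associated to the algebra generated by $\mcA(S)\cup\Hilb(G)$, but the content is the same.)
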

\begin{proof}
We recall that $S$ is a factor of the WAP compactification, as per Remark~\ref{rem:semitop semigp}(\ref{rem:semitop semigp.bi-inv}) and Fact~\ref{factors of WAP}. The equivalence $(1)\Leftrightarrow (2)$ is then proved exactly as in Theorem \ref{Main}. The implication $(3)\Rightarrow (1)$ is clear, for example by the previous corollary.

$(1)\Rightarrow (3)$: Let $A\subset\WAP(G)$ be the closed algebra generated by the union of $\mcA(S)$ and $\Hilb(G)$, and let $S_H$ be the compactification of $G$ associated to $A$. By Theorem~\ref{theorem factors of H(G)}, to prove $(3)$, it is enough to show that $S_H=H(G)$.

It follows from Facts~\ref{factors of WAP} and~\ref{remark *-closed} that $S_H$ is a semitopological $*$-semigroup compactification. Moreover, since $S$ and $H(G)$ are inverse semigroups, so is $S_H$. To justify this, let $\pi\colon S_H\to S$ denote the canonical factor map, which is a homomorphism of $*$-semigroups; also, recall that we may identify each function $f\in\mcA(S)\subset A$ with its continuous extensions to $S$ and $S_H$. Then, given $q\in S_H$, we have $f(q)=f(\pi(q))=f(\pi(qq^*q))=f(qq^*q)$, since the equality $\pi(q)=\pi(qq^*q)$ holds in~$S$. Similarly, $f(q)=f(qq^*q)$ for each $f\in\Hilb(G)\subset A$. Now, since $S_H$ is the Gelfand space of the closed algebra generated by $A_0=\mcA(S)\cup\Hilb(G)$, the set $A_0$ (seen as a subset of $C(S_H)$) separates points of $S_H$. Hence $q=qq^*q$, and this proves our claim that $S_H$ is an inverse semigroup.

Let $\phi_0\colon W(G)\to S_H$ and $\phi_1\colon S_H\to H(G)$ be the canonical factor maps. We need to show that $\phi_1$ is injective, so that $S_H=H(G)$. The proof of Theorem~\ref{Main} shows that the canonical factor map $\phi_1\phi_0\colon W(G)\to H(G)$ is injective on the set of regular elements of $W(G)$.

Let $p\in W(G)$ be any element, and let $P$ be the closed subsemigroup of $W(G)$ generated by $p^*p$. Then $P^*=P$, so by \cite{bentsa}, Lemma~3.6, there exists an idempotent $e_p\in P$ such that the set $W(G)e_p$ is contained in every set $W(G)s$ for $s\in P$. Then $W(G)e_p\subset W(G)p^*pe_p\subset W(G)pe_p\subset W(G)e_p$, so $W(G)pe_p=W(G)e_p$. It follows from Fact~\ref{fact:Sp=Sqp etc}(\ref{Sp=Sqp}) that $e_p=p^*pe_p$. We set $\sigma(p)=pe_p$ and we note, using Fact~\ref{fact:Sp=Sqp etc}(\ref{idemp self-adj}), that $\sigma(p)$ is regular.

We claim that $\phi_0(\sigma(p))=\phi_0(p)$. Since $P$ is generated by $p^*p$, there is a sequence $n_i<\omega$ such that $(p^*p)^{n_i}\to e_p$. By continuity, $\phi_0(p(p^*p)^{n_i})\to\phi_0(pe_p)$, but we also have $\phi_0(p(p^*p)^{n_i})=\phi_0(p)$ since $\phi_0$ is a homomorphism and $S_H$ is an inverse semigroup.

Finally, let $q,q'\in S_H$ and suppose $\phi_1(q)=\phi_1(q')$. Let $p,p'\in W(G)$ be any elements with $\phi_0(p)=q$ and $\phi_0(p')=q'$. The associated elements $\sigma(p)$ and $\sigma(p')$ are regular and have the same image in $H(G)$, hence $\sigma(p)=\sigma(p')$. It follows that $q=q'$.
\end{proof}

We point out that, even for the group of integers $G=\mbZ$, a Hilbert-representable semitopological semigroup compactification of $G$ need not be an inverse semigroup. The semigroup considered in \cite{bouetalIdempNotClosed} serves as a counterexample.

To conclude, we give a bound on the complexity of the countable factors of $H(G)$.

\begin{prop}
\label{p:CB-rank}
Let $Z \subset \mcH$ be a countable, weakly compact subset of a Hilbert space $\mcH$. Denote $D(Z) = \{\|\xi - \eta\| : \xi, \eta \in Z\}$ and assume that $D(Z)$ is finite. Then $D(Z') \subsetneq D(Z)$ (here $Z'$ is the Cantor--Bendixson derivative of $Z$). In particular, the Cantor--Bendixson rank of $Z$ is bounded by $|D(Z)|$.
\end{prop}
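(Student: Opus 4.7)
The plan is to establish the strict inclusion $D(Z') \subsetneq D(Z)$ by showing that $d := \max D(Z)$ does not belong to $D(Z')$. Suppose for contradiction that there exist $\xi, \eta \in Z'$ with $\|\xi - \eta\| = d$. Since $Z$ is countable and weakly compact (hence weakly metrizable), the fact that $\xi \in Z'$ lets us pick a sequence $(\xi_n)_{n<\omega} \subset Z \setminus \{\xi\}$ converging weakly to $\xi$. As $Z$ is weakly compact it is norm-bounded, so after passing to a subsequence we may assume $\|\xi_n\| \to N$ for some $N \geq \|\xi\|$ (lower bound by weak lower semi-continuity of the norm).

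The crux of the argument is the observation that in fact $N > \|\xi\|$. Indeed, if $N = \|\xi\|$, then weak convergence combined with convergence of norms in a Hilbert space yields strong convergence $\xi_n \to \xi$, so $\|\xi_n - \xi\| \to 0$. But every $\|\xi_n - \xi\|$ lies in the finite set $D(Z)$, which forces $\|\xi_n - \xi\| = 0$ eventually, contradicting $\xi_n \neq \xi$.

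With this in hand a direct expansion finishes the proof. Using $\langle \xi_n, \eta\rangle \to \langle \xi, \eta\rangle$ (weak convergence against the fixed vector $\eta$), we compute
\begin{equation*}
\|\xi_n - \eta\|^2 = \|\xi_n\|^2 - 2\operatorname{Re}\langle \xi_n, \eta\rangle + \|\eta\|^2 \longrightarrow N^2 - 2\operatorname{Re}\langle \xi, \eta\rangle + \|\eta\|^2 = \|\xi - \eta\|^2 + \bigl(N^2 - \|\xi\|^2\bigr) > d^2,
\end{equation*}
so $\|\xi_n - \eta\| > d$ for large $n$, contradicting $d = \max D(Z)$ (since each $\|\xi_n - \eta\|$ lies in $D(Z)$). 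The ``in particular'' clause then follows by a straightforward induction: each iterated derivative $Z^{(k)}$ inherits the hypotheses of the proposition, so while $Z^{(k)}$ is non-empty we have $|D(Z^{(k)})| \leq |D(Z)| - k$, forcing $Z^{(|D(Z)|)} = \emptyset$.

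The only real obstacle is identifying and justifying the key observation $N > \|\xi\|$, which is where the finiteness of $D(Z)$ enters; everything else is a routine expansion of the Hilbert space inner product together with weak convergence of inner products against a fixed vector.
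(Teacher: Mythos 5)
Your proof is correct and follows essentially the same route as the paper's: both show that $\max D(Z)$ cannot be realized by points of $Z'$ by playing weak convergence against norm convergence and using the finiteness of $D(Z)$ to rule out non-trivial norm-convergent sequences. The only difference is organizational --- the paper attaches the accumulating sequence to the far endpoint $\eta$ (after translating $\xi$ to $0$) and contradicts the distinctness of its terms, while you attach it to $\xi$, extract the norm jump $N > \|\xi\|$, and contradict the maximality of $d$ directly.
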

\begin{proof}
Let $\delta = \max D(Z)$; we show that $\delta \notin D(Z')$. Suppose, towards a contradiction, that $\xi, \eta \in Z'$ are such that $\|\xi - \eta\| = \delta$. By translating $Z$, we can assume that $\xi = 0$. Let $\eta_n \to^w \eta$ with $\eta_n$ distinct elements of $Z$. As $\delta$ is maximal, $D(Z)$ is finite, $\eta_n \to^w \eta$, and the norm is lower semicontinuous in the weak topology, we must have that eventually $\|\eta_n\| = \|\eta\|$. This, in turn, implies that $\eta_n \to \eta$ in norm, which means that $\eta_n$ is eventually constant (again, since $D(Z)$ is finite). This contradicts the choice of the sequence $\eta_n$.
\end{proof}

Recall that a \emph{$G$-ambit} $(X,x_0)$ is just a compactification of $G$ where $x_0$ is the image of $1$. If $X$ is countable, then $x_0$ must be isolated.

\begin{cor}
\label{c:CB-finite}
Let $G$ be a Roelcke precompact group and let $(X, x_0)$ be a Hilbert-representable $G$-ambit. Then $X$ is countable if and only if the stabilizer $G_{x_0}$ is open, and in this case we have
  \begin{equation*}
    \rank X \leq |\{G_{x_0} g G_{x_0} : g \in G\}|.
  \end{equation*}
\end{cor}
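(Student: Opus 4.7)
For the ``only if'' direction of the equivalence, observe as in the paragraph preceding the statement that a countable, nonempty compact Hausdorff space has an isolated point by Baire category; since $G$ acts by homeomorphisms and the orbit $G \cdot x_0$ is dense in $X$, that orbit must meet the set of isolated points, and by $G$-equivariance $x_0$ itself is isolated, so $G_{x_0}$ is open.

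The rest of the argument handles both the converse and the rank bound at once. Set $V = G_{x_0}$ and $n = |V\backslash G/V|$, which is finite by Roelcke precompactness. Fix a Hilbert representation $(\iota,\pi)$ of $X$ on $\mcH$ and let $\eta = \iota(x_0)$. Since $V$ fixes $x_0$, we have $\pi(v)\eta = \eta$ for every $v \in V$, and after replacing $\mcH$ by the closed linear span of $\pi(G)\eta$ we may assume $\mcH$ is separable. As in the proof of Lemma~\ref{l:ctble-image}, the set
\begin{equation*}
F := \{\langle \pi(g)\eta, \pi(h)\eta\rangle : g, h \in G\} = \{\langle \eta, \pi(k)\eta\rangle : k \in G\}
\end{equation*}
factors through the double coset space $V\backslash G/V$, so $|F| \leq n$. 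Separate weak continuity of the inner product together with $\iota(X) = \overline{\pi(G)\eta}^{\,w}$ then implies that $\langle \xi, \chi\rangle \in F$ for all $\xi, \chi \in \iota(X)$ (first fix $\chi$ as a weak limit of some $\pi(h_k)\eta$, then let $\xi$ vary as a weak limit of $\pi(g_l)\eta$). In particular $\|\xi\|^2 = \langle \xi,\xi\rangle$ takes at most $n$ distinct values on $\iota(X)$, and since $\|\xi-\chi\|^2 = \|\xi\|^2 + \|\chi\|^2 - 2\operatorname{Re}\langle\xi,\chi\rangle$ also takes only finitely many values, $\iota(X)$ is $\delta$-separated in norm for some $\delta > 0$. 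The separability of $\mcH$ then forces $\iota(X)$, hence $X$, to be countable; this part is essentially Lemma~\ref{l:ctble-image} applied to the orbit $\iota(X)$.

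For the rank estimate I would show that any $\xi \in \iota(X)$ of maximal norm is weakly isolated in $\iota(X)$. Indeed, if $\xi_k \to \xi$ weakly with $\xi_k \in \iota(X) \setminus \{\xi\}$, then weak lower semicontinuity of the norm together with the maximality of $\|\xi\|$ yields $\|\xi_k\| \to \|\xi\|$; since $\|\xi_k\|$ takes values in a finite set, $\|\xi_k\| = \|\xi\|$ eventually, and then weak convergence plus convergence of norms implies norm convergence $\xi_k \to \xi$, contradicting the $\delta$-separation of $\iota(X)$. Consequently the maximum-norm points of $\iota(X)$ are excluded from $X^{(1)}$, and iterating the same reasoning inside each derived set, the finite set of norms appearing in $\iota(X^{(k)})$ strictly shrinks with $k$. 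Since this set has cardinality at most $n$, we conclude $X^{(n)} = \emptyset$, and hence $\rank X \leq n$.

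The main obstacle lies precisely in this last step: a direct appeal to Proposition~\ref{p:CB-rank} would only yield the cruder estimate $\rank X \leq |D(\iota(X))|$, which is in general strictly larger than $n$ (for instance when $G = S_\infty$ acts on its natural set, one has $n = 2$ while $|D(\iota(X))| = 3$). The sharper bound requires the maximum-norm argument above, exploiting the finer observation that the squared norms themselves already lie in the size-$n$ set $F$, so the descent under passage to the derived set can be controlled by norms rather than by all pairwise distances.
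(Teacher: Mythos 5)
Your proof is correct, and it follows the two-step strategy the paper intends (countability from the finiteness of the inner-product set in the proof of Lemma~\ref{l:ctble-image}, then a Cantor--Bendixson descent in the spirit of Proposition~\ref{p:CB-rank}). But you have also put your finger on a real imprecision in the paper's one-line proof: a literal application of Proposition~\ref{p:CB-rank} only gives $\rank X \le |D(\iota(X))|$, and since a distance in $\iota(X)$ is determined by a \emph{triple} of elements of the inner-product set $F$ (two squared norms and a cross term), $|D(\iota(X))|$ can genuinely exceed $n = |V\backslash G/V|$. Your $S_\infty$ example is accurate: there $n=2$ while $D(\iota(X)) = \{0,1,\sqrt{2}\}$, so the cruder bound is $3$ even though the rank is $2$. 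Your repair --- running the descent on the set of squared norms $\{\langle \xi,\xi\rangle : \xi \in \iota(X)\}\subset F$, which has at most $n$ elements, and showing that points of maximal norm are weakly isolated via lower semicontinuity of the norm plus the $\delta$-separation --- is exactly the right refinement of the argument of Proposition~\ref{p:CB-rank}, and it is what is actually needed to obtain the bound as stated in Corollary~\ref{c:CB-finite}. The only stylistic caveat is that "weakly isolated" should be read in the topology of $X$ itself (which agrees with the weak topology on $\iota(X)$ since $\iota$ is an embedding), and the same reasoning must be applied verbatim to each derived set $X^{(k)}$, which remains weakly compact and $\delta$-separated with norms in the same finite set; you indicate this, so the argument is complete.
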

\begin{proof}
Follows from Lemma~\ref{l:ctble-image} (and its proof) and Proposition~\ref{p:CB-rank}.
\end{proof}

Note that the previous result is a generalization of the following model-theoretic fact, which follows from one-basedness: in an $\aleph_0$-categorical $\aleph_0$-stable theory, the Morley rank of any finite tuple $a$ is bounded by the number of distinct types $\tp(b/a)$ with $\tp(b)=\tp(a)$.

\begin{question}
Do countable ambits of pro-oligomorphic groups necessarily have finite Cantor--Bendixson rank?
\end{question}

We remark that every countable compactification of a Roelcke precompact Polish group $G$ is a factor of $W(G)$. Indeed, since countable compact systems are \emph{Asplund-representable} (see, for instance, \cite{glamegHereditarily}, Corollary 10.2), this follows from \cite{iba14}, Theorem 2.9.

Akin and Glasner \cite{akigla14} construct countable WAP $\mbZ$-ambits of arbitrarily high rank. But of course, the group $\mbZ$ is not pro-oligomorphic.

\noindent\hrulefill

\bibliographystyle{amsalpha}
\bibliography{biblio}

\end{document}